\newtheorem{teor}{Theorem}[section]
\newtheorem{prop}[teor]{Proposition}
\newtheorem{coro}[teor]{Corollary}
\newtheorem{lema}[teor]{Lemma}
\newtheorem{defi}{Definition}[section]
\newtheorem{nota}{Remark}[section]
\newenvironment{proof}[1][Proof]{\textbf{#1.} }{\ \rule{0.5em}{0.5em}}
\begin{document}
\def\N{\mathbb{N}}
\def\Z{\mathbb{Z}}
\def\K{\mathbb{K}}
\def\R{\mathbb{R}}
\def\D{\mathbb{D}}
\def\C{\mathbb{C}}
\def\T{\mathbb{T}}

\def\dint{\int}
\def\intc{\int_0^1}
\def\intD{\int_{\D}}
\def\intpi{\int_0^{2\pi}}
\def\sumi{\sum_{n=0}^\infty}
\def\dfrac{\frac}
\def\dsum{\sum}
\def\cuadro{\hfill $\Box$}
\def\qed{\hfill $\Box$}
\def\prueba{\vskip10pt\noindent{\it PROOF.}\hskip10pt}
\newcommand{\cuadrosymb}{\mbox{ }~\hfill~{\rule{2mm}{2mm}}}

\def\L{{\mathcal L}}
\def\P{{\cal P}}
\def\B{\mathcal B}
\def\KO{{\cal K}}
\def\J{\mathcal J}

\def\M{\mathcal M}

\def\botimes{{\bf \otimes}}
\def\ba{\begin{eqnarray*}}
\def\ea{\end{eqnarray*}}

\def\be{\begin{equation}}

\def\ee{\end{equation}}

\def\A{{\bf A}}
\def\bB{{\bf B}}
\def\bT{{\bf T}}
\def\x{{\bf x}}
\def\y{{\bf y}}

\def\dt{\frac{dt}{2\pi}}
\def\ds{\frac{ds}{2\pi}}

\def\d{\displaystyle}

\def\Tkj{T_{kj}}
\def\Skj{S_{kj}}
\def\Rk{{\bf R_k}}
\def\Cj{{\bf C_j}}
\def\ej{{\bf e_j}}
\def\ek{{\bf e_k}}

\def\la{\langle}
\def\ra{\rangle}
\def\e{\varepsilon}

\def\Bl2{\mathcal B(\ell^2(H))}
\def\Ml2{\mathcal M_r(\ell^2(H))}
\def\l2{\ell^2(H)}

\def\sj{\sum_{j=1}^\infty}
\def\sk{\sum_{k=1}^\infty}

\def\fk{\varphi_k}
\def\fj{\varphi_j}


\def\lsot{\ell^2_{SOT}(\N,\B(H))}
\def\lsott{\ell^2_{SOT}(\N^2,\B(H))}

\title{Schur product with operator-valued entries}
\author{Oscar Blasco, Ismael Garc\'{\i}a-Bayona\thanks{%
Partially supported by  MTM2014-53009-P(MINECO Spain) and  FPU14/01032 (MECD Spain)}}
\date{}
\maketitle

\begin{abstract} In this paper we characterize Toeplitz matrices with entries in the space of bounded operators on Hilbert spaces $\B(H)$ which define bounded operators   acting on $\ell^2(H)$ and  use it to get the description of the right Schur multipliers
acting on $\ell^2(H)$ in terms of certain operator-valued measures.
\end{abstract}

AMS Subj. Class: Primary 46E40, Secondary 47A56; 15B05

Key words: Schur product; Toeplitz matrix; Schur multiplier; vector-valued measures

\section{Introduction. }

Throughout the paper $X,Y$ and $E$ are complex Banach spaces  and $H$ denotes a separable complex Hilbert space  with ortonormal basis $(e_n)$. We write $\L(X,Y)$ for the space of bounded linear operators, $X^*$ for the dual space and denote $\B(X)=\L(X,X)$. We also use the notations $\ell^2(E)$, $C(\T,E)$,  $L^p(\T, E)$ or $M(\T,E)$ for the space of sequences ${\bf z}=(z_n)$ in $E$ such that $\|{\bf z}\|_{\ell^2(E)}=(\sum_{n=1}^\infty \|z_n\|^2)^{1/2}<\infty$, the space of $E$-valued continuous functions, the space of strongly measurable functions from the measure space $\T=\{z\in \C: |z|=1\}$ into $E$ with $\|f\|_{L^p(\T,E)}=(\int_0^{2\pi} \|f(e^{it})\|^p\dt)^{1/p}<\infty$ for $1\le p \le \infty$ (with the usual modification for $p=\infty$) and  the space of regular vector-valued measures of bounded variation respectively. As usual for $E=\C$ we simply write $\ell^2$, $C(\T)$, $L^p(\T)$ and $M(\T)$.

Given two matrices $A=(\alpha_{kj})$ and $B=(\beta_{kj})$ with complex entries, their Schur product  is defined by $A*B= (\alpha_{kj}\beta_{kj})$. This operation endows the space $\B(\ell^2)$  with a structure of Banach algebra. A proof of the next result, due to J. Schur, can be found in \cite[Proposition 2.1]{Be} or \cite[Theorem 2.20]{PP}.
\begin{teor} \label{ts}(Schur, \cite{Schur}) If $A=(\alpha_{kj})\in \B(\ell^2)$ and $B=(\beta_{kj})\in \B(\ell^2)$  then $A*B \in \B(\ell^2)$. Moreover
$\|A*B \|_{\B(\ell^2)}\le \|A \|_{\B(\ell^2)}\|B \|_{\B(\ell^2)}.$
\end{teor}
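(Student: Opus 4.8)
The plan is to realize the Schur product as a compression of the ampliation $A\otimes I$ and to control it by the Cauchy--Schwarz inequality. The starting point is an elementary factorization of the entries of $B$: writing $\phi_k=B^*e_k$ and $\psi_j=e_j$ (elements of $\ell^2$), one has
$$\beta_{kj}=\langle Be_j,e_k\rangle=\langle e_j,B^*e_k\rangle=\langle \psi_j,\phi_k\rangle,$$
with $\|\psi_j\|=1$ and $\|\phi_k\|\le\|B\|$ for every $j,k$.

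Next I would lift everything to the Hilbert space $\ell^2(\ell^2)$. Fix unit vectors $x=(x_j)$ and $y=(y_k)$ in $\ell^2$ and set $\mathbf u=(x_j\psi_j)_j$ and $\mathbf v=(y_k\phi_k)_k$; these belong to $\ell^2(\ell^2)$ with $\|\mathbf u\|\le\sup_j\|\psi_j\|=1$ and $\|\mathbf v\|\le\sup_k\|\phi_k\|\le\|B\|$, since $\|x\|=\|y\|=1$. Letting $A\otimes I$ act on $\ell^2(\ell^2)$ by $(A\otimes I)(w_j)_j=\big(\sum_j\alpha_{kj}w_j\big)_k$, a direct computation gives
$$\langle (A\otimes I)\mathbf u,\mathbf v\rangle=\sum_{k,j}\alpha_{kj}x_j\overline{y_k}\langle\psi_j,\phi_k\rangle=\sum_{k,j}\alpha_{kj}\beta_{kj}x_j\overline{y_k}=\langle (A*B)x,y\rangle.$$

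Finally I would invoke the standard bound $\|A\otimes I\|\le\|A\|$ (the ampliation does not increase the operator norm) to conclude
$$|\langle (A*B)x,y\rangle|\le\|A\otimes I\|\,\|\mathbf u\|\,\|\mathbf v\|\le\|A\|\,\|B\|.$$
Taking the supremum over unit vectors $x,y$ yields $A*B\in\B(\ell^2)$ together with $\|A*B\|\le\|A\|\|B\|$.

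I expect the main obstacle to be conceptual rather than computational: the key idea is to recognize that the bilinear form of $A*B$ is exactly the form of the ampliation $A\otimes I$ evaluated on the vectors $\mathbf u,\mathbf v$ assembled from the columns of $B^*$. Once this identification is in place, the estimate is an immediate application of Cauchy--Schwarz together with the norm bound for the ampliation. One should check that $\mathbf u,\mathbf v$ genuinely lie in $\ell^2(\ell^2)$ and that the interchange of summation above is legitimate; since $x,y\in\ell^2$ and the entries are uniformly bounded, absolute convergence makes this routine. An alternative that avoids tensor notation is to apply Cauchy--Schwarz twice directly to $\sum_{k,j}\alpha_{kj}\beta_{kj}x_j\overline{y_k}$, but the ampliation viewpoint is cleaner and adapts naturally to the operator-valued setting studied later.
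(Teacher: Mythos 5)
You give a correct proof, but by a genuinely different route from the paper's. The paper never proves Theorem \ref{ts} directly (it cites \cite{Be} and \cite{PP}); its own argument appears in the operator-valued generalization, Theorem \ref{t1}, and runs as follows: write $\langle T_{kj}S_{kj}x_j,y_k\rangle=\langle S_{kj}x_j,T_{kj}^{*}y_k\rangle$, apply Cauchy--Schwarz over the double index $(k,j)$, and bound the two resulting square functions $\sum_{k,j}\|S_{kj}x_j\|^{2}$ and $\sum_{k,j}\|T_{kj}^{*}y_k\|^{2}$ by Proposition \ref{mf1} applied to $\bB$ and $\A^{*}$. Specialized to scalars this collapses to the classical double Cauchy--Schwarz proof, the two square functions being controlled by the uniform $\ell^2$-bounds on the columns of $B$ and the rows of $A$. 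You instead factor $\beta_{kj}=\langle\psi_j,\phi_k\rangle$ and identify the bilinear form of $A*B$ with that of the ampliation $A\otimes I$ evaluated at vectors assembled from $(\psi_j)$ and $(\phi_k)$. This proves more than the theorem asks: for every $A\in\mathcal{B}(\ell^2)$ and every matrix $C=(\langle\psi_j,\phi_k\rangle)$ built from uniformly bounded families one gets $\|A*C\|\le\|A\|\sup_j\|\psi_j\|\sup_k\|\phi_k\|$, which is the sufficient half of the classical characterization of Schur multipliers and the formulation that connects with complete boundedness. What the paper's route buys is robustness in the noncommutative setting it is actually after: when the entries are operators and the Schur product is the composition $T_{kj}S_{kj}$, the inner-product factorization of the entries of one factor has no direct analogue, whereas Cauchy--Schwarz plus the square-function estimate goes through. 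So your closing claim that the ampliation viewpoint ``adapts naturally to the operator-valued setting studied later'' is optimistic; the adaptation that actually works there is essentially the paper's argument.

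One detail you flagged deserves correction: the interchange of summation is not justified by ``absolute convergence from uniformly bounded entries,'' since that only gives $\sum_{k,j}|\alpha_{kj}\beta_{kj}x_j\overline{y_k}|\le\|A\|\,\|B\|\,\|x\|_{\ell^1}\|y\|_{\ell^1}$, and vectors in $\ell^2$ need not lie in $\ell^1$. The routine fixes are either to take $x,y$ finitely supported, obtain the bound on that dense class, and extend; or to note that each row of $A$ lies in $\ell^2$, so each coordinate series $\sum_j\alpha_{kj}x_j\psi_j$ converges absolutely in $\ell^2$, and then expanding $\langle(A\otimes I)\mathbf{u},\mathbf{v}\rangle$ coordinatewise is licensed by continuity of the inner product together with Cauchy--Schwarz in $k$. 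Either way the step is indeed routine, just not for the reason you gave.
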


More generally a matrix $A=(\alpha_{kj})$ is said to be a Schur multiplier, to be denoted by $A\in {\mathcal M}(\ell^2)$, whenever
$A* B\in \B(\ell^2)$  for any $B\in \B(\ell^2)$.
 For the study of Schur multipliers we refer the reader to \cite{Be, PP}.
 Recall that  a Toeplitz matrix is a matrix $A=(\alpha_{kj})$ such that there exists a sequence of complex numbers $(\gamma_l)_{l\in \Z}$ so that $\alpha_{kj}=\gamma_{k-j}$. The study of Toeplitz matrices which define bounded operators or Schur multipliers goes back to work of Toeplitz in \cite{T}.
 The reader is referred to  \cite{AP, Be, PP} for  recent proofs of the following results  concerning Toeplitz matrices.

\begin{teor} \label{tt}(Toeplitz \cite{T}) Let $A=(\alpha_{kj})$ be a Toeplitz matrix. Then $A\in \B(\ell^2)$ if and only if there exists $f\in L^\infty(\T)$ such that $\alpha_{kj}=\hat f(k-j)$ for all $k,j\in \N$. Moreover $\|A\|=\|f\|_{L^\infty(\T)}.$
\end{teor}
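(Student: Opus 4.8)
The plan is to prove both implications together with the norm identity by exploiting the Fourier duality between $\ell^2$ and the analytic part of $L^2(\T)$. For the sufficiency, suppose $f\in L^\infty(\T)$ and $\alpha_{kj}=\hat f(k-j)$, and write $\gamma_l=\hat f(l)$. For finitely supported $\mathbf a=(a_k)$ and $\mathbf b=(b_j)$ in $\ell^2$, I would compute the bilinear form
\[
\langle A\mathbf b,\mathbf a\rangle=\sum_{k,j}\overline{a_k}\,b_j\,\hat f(k-j)=\frac{1}{2\pi}\int_0^{2\pi} f(e^{it})\,\overline{u(e^{it})}\,v(e^{it})\,dt,
\]
where $u(e^{it})=\sum_k a_k e^{ikt}$ and $v(e^{it})=\sum_j b_j e^{ijt}$ satisfy $\|u\|_{L^2(\T)}=\|\mathbf a\|_{\ell^2}$ and $\|v\|_{L^2(\T)}=\|\mathbf b\|_{\ell^2}$ by Parseval. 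Bounding $|f|$ by $\|f\|_{L^\infty(\T)}$ and applying Cauchy--Schwarz then gives $|\langle A\mathbf b,\mathbf a\rangle|\le\|f\|_{L^\infty(\T)}\|\mathbf a\|_{\ell^2}\|\mathbf b\|_{\ell^2}$, so by density $A\in\B(\ell^2)$ with $\|A\|\le\|f\|_{L^\infty(\T)}$.

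For the necessity and the reverse inequality, assume $A\in\B(\ell^2)$. First I would note that each column $Ae_j$ lies in $\ell^2$ with $\|Ae_j\|^2=\sum_{m\ge 1-j}|\gamma_m|^2\le\|A\|^2$; letting $j\to\infty$ shows $(\gamma_m)_{m\in\Z}\in\ell^2(\Z)$, so that $f:=\sum_m\gamma_m e^{imt}$ is a well-defined element of $L^2(\T)$ with $\hat f(m)=\gamma_m$. It then remains to upgrade $f$ to $L^\infty(\T)$ with $\|f\|_{L^\infty(\T)}\le\|A\|$, and the crucial intermediate claim is the multiplication estimate $\|fp\|_{L^2(\T)}\le\|A\|\,\|p\|_{L^2(\T)}$ for every trigonometric polynomial $p$.

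The main obstacle is precisely this claim, since $A$ only registers nonnegative frequencies whereas $p$ (and $fp$) have spectrum on all of $\Z$. I would overcome this with a shift: choosing $M$ large enough that $q:=e^{iMt}p$ has $\hat q$ supported in $\N$, and applying $A$ to the coefficient vector $\mathbf b=(\hat q(n))_{n\ge 1}$, the vanishing of $\hat q$ on nonpositive indices lets me replace the one-sided sum by a full convolution, so that $(A\mathbf b)_k=\widehat{fq}(k)$ for $k\ge 1$. Hence $\|A\mathbf b\|_{\ell^2}^2=\sum_{k\ge 1}|\widehat{fp}(k-M)|^2=\sum_{l\ge 1-M}|\widehat{fp}(l)|^2$, while $\|\mathbf b\|_{\ell^2}=\|q\|_{L^2(\T)}=\|p\|_{L^2(\T)}$. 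The inequality $\|A\mathbf b\|\le\|A\|\,\|\mathbf b\|$ together with the limit $M\to\infty$ yields $\|fp\|_{L^2(\T)}\le\|A\|\,\|p\|_{L^2(\T)}$.

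Finally, by density of trigonometric polynomials, multiplication by $f$ extends to a bounded operator on $L^2(\T)$ of norm at most $\|A\|$. A standard localization argument, testing this operator against indicator functions of the level sets $\{|f|>\lambda\}$, forces $f\in L^\infty(\T)$ with $\|f\|_{L^\infty(\T)}\le\|A\|$. Combined with the sufficiency direction, which gave $\|A\|\le\|f\|_{L^\infty(\T)}$, this establishes both the characterization and the identity $\|A\|=\|f\|_{L^\infty(\T)}$.
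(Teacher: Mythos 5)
Your proof is correct, but note that there is nothing in the paper to compare it with: Theorem \ref{tt} is stated as classical background, with the reader referred to the literature for proofs, and the paper only uses it as a black box (for instance, in the converse direction of its operator-valued Toeplitz theorem, where the bounded functions $\psi_{x,y}$ are produced precisely by this result). What you have written is a self-contained argument of the standard Hartman--Wintner type, and it is sound. The sufficiency direction, via the identity $\langle A\mathbf{b},\mathbf{a}\rangle=\frac{1}{2\pi}\int_0^{2\pi} f\,\overline{u}\,v\,dt$ together with Parseval and Cauchy--Schwarz, is exactly right and gives $\|A\|\le\|f\|_{L^\infty(\T)}$. In the necessity direction your three steps all work: the column estimate yields $(\gamma_m)_{m\in\Z}\in\ell^2(\Z)$ and hence a symbol $f\in L^2(\T)$; the shift trick is the key point, and it is correctly executed --- because $\hat q$ vanishes on nonpositive indices, the truncated sum $\sum_{j\ge1}\gamma_{k-j}\hat q(j)$ equals the full (finite) convolution $\widehat{fq}(k)$, so $\|A\mathbf{b}\|_{\ell^2}^2=\sum_{l\ge 1-M}|\widehat{fp}(l)|^2$, and monotone convergence as $M\to\infty$ gives $\|fp\|_{L^2(\T)}\le\|A\|\,\|p\|_{L^2(\T)}$ for every trigonometric polynomial $p$; the level-set test then forces $\|f\|_{L^\infty(\T)}\le\|A\|$. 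One routine point you should make explicit: after extending $p\mapsto fp$ by density to a bounded operator $T$ on $L^2(\T)$, you need to know that $Tg=fg$ for general $g\in L^2(\T)$, since the indicator $\chi_{\{|f|>\lambda\}}$ is not a polynomial. This follows because $p_n\to g$ in $L^2(\T)$ implies $fp_n\to fg$ in $L^1(\T)$ (Cauchy--Schwarz, using $f\in L^2(\T)$), so the $L^2$-limit $Tg$ coincides a.e.\ with $fg$. With that remark inserted, the argument completely establishes both the characterization and the equality $\|A\|=\|f\|_{L^\infty(\T)}$.
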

\begin{teor} \label{tb}(Bennet \cite{Be}) Let $A=(\alpha_{kj})$ be a Toeplitz matrix. Then $A\in \M(\ell^2)$ if and only if there exists $\mu\in M(\T)$ such that $\alpha_{kj}=\hat\mu(k-j)$ for all $k,j\in \N$.  Moreover $\|A\|= \|\mu\|_{M(\T)}.$
\end{teor}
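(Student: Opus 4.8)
The plan is to prove the two inequalities $\|A\|\le\|\mu\|_{M(\T)}$ and $\|\mu\|_{M(\T)}\le\|A\|$ separately, which together give both the characterization and the norm identity. For sufficiency, suppose $\alpha_{kj}=\hat\mu(k-j)$ for some $\mu\in M(\T)$, with $\hat\mu(l)=\int_0^{2\pi}e^{-ilt}\,d\mu(t)$. For $t\in[0,2\pi]$ let $D_t$ be the diagonal unitary matrix with diagonal entries $e^{-ikt}$. Given $B=(\beta_{kj})\in\B(\ell^2)$, writing $\hat\mu(k-j)=\int_0^{2\pi}e^{-ikt}e^{ijt}\,d\mu(t)$ and integrating entrywise gives, for finitely supported $x,y\in\ell^2$,
$$\langle (A*B)x,y\rangle=\int_0^{2\pi}\langle D_tBD_t^*\,x,y\rangle\,d\mu(t).$$
Since each $D_t$ is unitary, $|\langle D_tBD_t^*x,y\rangle|\le\|B\|\,\|x\|\,\|y\|$, so that $|\langle(A*B)x,y\rangle|\le\|B\|\,\|x\|\,\|y\|\,\|\mu\|_{M(\T)}$. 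By density this shows $A*B\in\B(\ell^2)$ for every $B$, hence $A\in\M(\ell^2)$ with $\|A\|\le\|\mu\|_{M(\T)}$.

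For necessity, assume $A\in\M(\ell^2)$. I would test $A$ against the bounded Toeplitz matrices coming from trigonometric polynomials. If $p(t)=\sum_{|l|\le N}c_l e^{ilt}$ and $T_p$ is the Toeplitz matrix with $(T_p)_{kj}=c_{k-j}$, then $T_p\in\B(\ell^2)$ with $\|T_p\|=\|p\|_{L^\infty(\T)}$ by Theorem \ref{tt}. As $A$ is a Schur multiplier, $A*T_p\in\B(\ell^2)$ with $\|A*T_p\|\le\|A\|\,\|p\|_{L^\infty(\T)}$. But $A*T_p$ is again a banded Toeplitz matrix, with $l$-th coefficient $\gamma_l c_l$, so by Theorem \ref{tt} its symbol is the trigonometric polynomial $g(t)=\sum_{|l|\le N}\gamma_l c_l e^{ilt}$ and $\|g\|_{L^\infty(\T)}=\|A*T_p\|$. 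Evaluating at $t=0$ gives the crucial scalar estimate
$$\Big|\sum_{|l|\le N}\gamma_l c_l\Big|=|g(0)|\le\|g\|_{L^\infty(\T)}=\|A*T_p\|\le\|A\|\,\|p\|_{L^\infty(\T)}.$$

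Then I would define the functional $\Lambda$ on trigonometric polynomials by $\Lambda(p)=\sum_l\gamma_l\hat p(l)$; the displayed bound says precisely that $|\Lambda(p)|\le\|A\|\,\|p\|_{L^\infty(\T)}$. Since trigonometric polynomials are dense in $C(\T)$, $\Lambda$ extends to a bounded functional on $C(\T)$ of norm at most $\|A\|$, and the Riesz representation theorem yields $\nu\in M(\T)$ with $\|\nu\|_{M(\T)}\le\|A\|$ and $\Lambda(p)=\int_0^{2\pi}p\,d\nu$. Testing on $p(t)=e^{ilt}$ gives $\gamma_l=\int_0^{2\pi}e^{ilt}\,d\nu(t)$, and letting $\mu$ be the reflection of $\nu$ under $t\mapsto -t$ produces $\hat\mu(l)=\gamma_l$ with $\|\mu\|_{M(\T)}=\|\nu\|_{M(\T)}\le\|A\|$. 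Combined with the sufficiency estimate, this gives $\|A\|=\|\mu\|_{M(\T)}$.

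I expect necessity to be the main obstacle, and within it the delicate point is converting the multiplier hypothesis into a scalar inequality: one must realize $\sum_l\gamma_l c_l$ as the value at a point of the $L^\infty$ symbol of the bounded Toeplitz matrix $A*T_p$, which is where Theorem \ref{tt} is invoked twice, first to control $\|T_p\|$ and then to identify the symbol of $A*T_p$. The remaining ingredients — density of trigonometric polynomials, the Hahn--Banach extension, the Riesz representation, and the reflection correcting the sign in the Fourier transform — are routine.
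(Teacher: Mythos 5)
Your proof is correct, but note that the paper itself does not prove this theorem: it is quoted as Bennet's classical result, with proofs deferred to the cited references, so there is no internal argument to compare yours against. Both halves of your argument are sound. The sufficiency step via $\langle (A*B)x,y\rangle=\int_0^{2\pi}\langle D_tBD_t^*x,y\rangle\,d\mu(t)$ with $D_t$ unitary is the standard averaging (transference) argument, and the finite support of $x,y$ justifies the interchange of sum and integral. The necessity step is also correct: testing the multiplier against the polynomial Toeplitz matrices $T_p$, using Theorem \ref{tt} twice (once for $\|T_p\|=\|p\|_{L^\infty(\T)}$, once to identify the symbol of $A*T_p$ as the polynomial $\sum_l\gamma_l\hat p(l)e^{ilt}$), and evaluating at $t=0$ does yield the functional bound $|\sum_l\gamma_l\hat p(l)|\le\|A\|\,\|p\|_{L^\infty(\T)}$, after which density of trigonometric polynomials in $C(\T)$ and Riesz representation finish the job. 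Two cosmetic remarks: the extension of $\Lambda$ is by density and continuity, not Hahn--Banach (no extension beyond the closure is needed); and your Fourier convention $\hat\mu(l)=\int e^{-ilt}\,d\mu(t)$ differs from the paper's convention $\hat\mu(k)=T_\mu(\varphi_k)=\int e^{ikt}\,d\mu(t)$, but since reflection $t\mapsto -t$ is an isometry of $M(\T)$ your explicit reflection step makes this harmless. It is worth observing that your strategy closely parallels what the paper does later in the operator-valued setting: its final theorem likewise tests a Toeplitz multiplier against Toeplitz matrices with $V^\infty(\T,\B(H))$ symbols (Corollary \ref{maincor}) and recovers a measure by duality (Lemma \ref{convo}), and it uses the scalar theorem you just proved as a black box through Lemma \ref{ultimo}.
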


It is known the recent interest for operator-valued functions (see \cite{HNVW}) and for the matricial analysis (see \cite{PP}) concerning their uses in different problems in Analysis. In this paper, we would like to formulate the analogues of the theorems above in the context of matrices $\A=(\Tkj)$ with entries $\Tkj\in\B(H)$. For such a purpose we are led to consider operartor-valued measures. We shall make use of several notions and spaces from  the theory of vector-valued measures and the reader is referred to classical books  \cite{D,DU} or to \cite{B} for some new results in connection with Fourier analysis.

In the sequel we write $ \la \cdot,\cdot\ra$ and $ \ll\cdot,\cdot \gg $ for the scalar products in  $H$ and $\l2$ respectively, where
$\ll\x,\y\gg= \sum_{j=1}^\infty \la x_j,y_j\ra$ and we use the notation $x\ej=(0,\cdots,0,x,0,\cdots) $ for the element in $\l2$ in which $x\in H$ is placed in the $j$-coordinate for $j\in \N$. As usual $c_{00}(H)=span\{x\ej:x\in H, j\in \N\}$.

\begin{defi}
Given a matrix $\A =(\Tkj)$ with entries $\Tkj\in \B(H)$  and $\x\in c_{00}(H)$ we  write $\A (\x)$ for the sequence $(\sum_{j=1}^\infty\Tkj(x_j))_k$.
We say that $\A\in \B(\ell^2(H))$ if the map $\x \to \A(\x)$ extends to a bounded linear operator in $\ell^2(H)$, that is
$$\Big(\sum_{k=1}^\infty \|\sum_{j=1}^\infty \Tkj(x_j)\|^2\Big)^{1/2}\le C \Big(\sum_{j=1}^\infty \|x_j\|^2\Big)^{1/2}.$$

We shall write
$$\|\A\|_{\Bl2}=\inf\{C\ge 0: \|\A \x \|_{\l2}\le C \|\x\|_{\l2}\}.$$
\end{defi}

\begin{defi}
Given two matrices $\A =(\Tkj)$ and $\bB=(\Skj)$ with entries $\Tkj, \Skj\in \B(H)$ we define the Schur product $\A*\bB= (\Tkj\Skj)$ where $\Tkj\Skj$ stands for the composition of the operators $\Tkj$ and $\Skj$.
\end{defi}

Contrary to the scalar-valued case this product is not commutative.
\begin{defi}
Given a matrix $\A =(\Tkj)$. We say that $\A$ is a right Schur multiplier (respectively left Schur multiplier), to be denoted by $\A\in \mathcal M_r(\l2)$ (respectively $\A\in \mathcal M_l(\l2)$ ), whenever $\bB* \A\in \Bl2$
 (respectively $\A* \bB\in \Bl2$ ) for any $\bB\in \Bl2$.
We shall write
$$\|\A\|_{\Ml2}=\inf\{C\ge 0: \|\bB * \A  \|_{\Bl2}\le C \|\bB\|_{\Bl2}\}$$
and
$$\|\A\|_{\mathcal M_l(\l2)}=\inf\{C\ge 0: \|\A*\bB   \|_{\Bl2}\le C \|\bB\|_{\Bl2}\}.$$
\end{defi}

 Denoting by $\A^*$ the adjoint matrix given by  $\Skj=T^*_{jk}$ for all $k,j\in \N$,  one easily sees that $\A\in \Bl2$ if and only if $\A^*\in \Bl2$ with $\|\A\|=\|\A^*\|$ and also that $\A\in \mathcal M_l(\l2)$ if and only if  $\A^*\in \mathcal M_r(\l2)$ and $\|\A\|_{\mathcal M_l(\l2)}= \|\A^*\|_{\mathcal M_r(\l2)}$.

If $X$ and $Y$ are Banach spaces we write $X\hat\otimes Y$ for the projective tensor product. We refer the reader to \cite[Chap.8]{DU}, \cite[Chap.2]{Ryan}) or \cite{DFS} for all possible results needed in the paper. We recall that $(X\hat\otimes Y)^*=\L(X,Y^*)$ and to avoid misunderstandings, for each $T\in \L(X,Y^*)$, we write $\J T$ when $T$ is seen as an element in $(X\hat\otimes Y)^*$. In other words we write $\mathcal J:\L(X,Y^*)\to (X\hat\otimes Y)^*$  for the isometry given by $ \mathcal J T(x\otimes y)=T(x)(y)$ for any $T\in \L(X,Y^*)$, $x\in X$ and $y\in Y$. Also, given $x^*\in X^*$ and $y^*\in Y^*$, we write $\widetilde{x^*\otimes y^*}$ for the operator in $\L(X, Y^*)$ given by $\widetilde{x^*\otimes y^*}(z)=x^*(z) y^*$ for each $z\in X$.
In the paper we shall restrict ourselves to the case $\L(X,Y^*)=\B(H)$, that is $X=Y^*=H$. Using the Riesz theorem  we identify $Y=Y^*=H$. Hence for $T,S\in \B(H)$ and $x,y\in H$ we shall use the following formulae \be \la  T(x),y\ra= \J T(x\botimes y),  \ee
\be \widetilde{(x\otimes y)}(z)=\la z,x\ra y, \quad z\in H,\ee
 \be \label{compo}T\widetilde{(x\otimes y)}=\widetilde{(x\otimes (Ty))}, \quad
\widetilde{(x\otimes y)}T=\widetilde{(T^*x)\otimes y},
\ee
\be \label{mainformula}
\J (TS)(x\otimes y)=\J T(Sx\otimes y)=\J S(x\otimes T^*y).
\ee

The paper is divided into four sections. The  first  section is of a preliminary character and we recall the basic notions on vector-valued sequences and functions to be used in the sequel. Next section contains several results on regular operator-valued measures which are the main ingredients for the remaining proofs in the paper. In Section 4 we are concerned with several necessary and sufficient conditions for a matrix $\A$ to belong to $\Bl2$ and we show that the Schur product endows $\Bl2$ with a Banach algebra structure also in this case. The final section deals with Toeplitz matrices
$\A$ with entries in $\B(H)$, that is those matrices  for which there exists a sequence $( T_l)_{l\in \Z}\subset \B(H)$ so that $\Tkj=T_{k-j}$.  We shall write $\mathcal T$ the family of such Toeplitz matrices and
we  characterize $\mathcal T\cap \Bl2$ as those matrices where $T_l=\hat\mu(j-k)$ for a certain regular operator-valued vector measure  $\mu$ belonging to $V^\infty(\T, \B(H))$ (see Definition \ref{vinf} below). Concerning the analogue of Theorem \ref{tb}  we shall show that $M(\T,\B(H))\subseteq \Ml2\subseteq M_{SOT}(\T,\B(H))$  where $M(\T,\B(H))$ stands for the space of regular operator-valued measures and $M_{SOT}(\T,\B(H))$ is defined, using the strong operator topology, as the space of vector measures $\mu$ such that $\mu_x\in M(\T, H)$ given by $\mu_x(A)=\mu(A)(x)$ for any $x\in H$.

\section{Preliminaries on operator-valued sequences and  functions.}

Write  $\ell^2_{weak}(\N, \B(H))$ and $\ell^2_{weak}(\N^2, \B(H))$ for the space of sequences ${\bf T}=(T_n)\subset \B(H)$ and matrices $\A=(T_{kj})\subset \B(H)$ such that
$$\|{\bf T}\|_{\ell^2_{weak}(\N, \B(H))}=\sup_{\|x\|=1, \|y\|=1}(\sum_{n=1}^\infty |\la T_n(x),y\ra|^2)^{1/2}<\infty$$ and
$$\|\A\|_{\ell^2_{weak}(\N^2, \B(H))}=\sup_{\|x\|=1, \|y\|=1}(\sk\sj |\la T_{kj}(x),y\ra|^2)^{1/2}<\infty.$$
The reader can see that these spaces actually coincide with the ones appearing using notation in \cite{DJT}.
Of course $\ell^2(E)\subsetneq \ell^2_{weak}(E)$. In the case $\B(H)$ we can actually  introduce certain spaces between $\ell^2(E)$ and $\ell^2_{weak}(E)$.
\begin{defi}Given a sequence ${\bf T}=(T_n)$  and a matrix $\A=(\Tkj)$ of operators in $\B(H)$ we write
\be \label{sot1}\|{\bf T}\|_{\lsot}=\sup_{\|x\|=1} (\sum_{n=1}^\infty \|T_n(x)\|^2)^{1/2}\ee
and
\be \label{sot2}\|{\bf A}\|_{\lsott}=\sup_{\|x\|=1} (\sj\sk \|\Tkj(x)\|^2)^{1/2}.\ee
We
set $\lsot$ and $\lsott$ for the spaces of sequences and operators with $\|{\bf T}\|_{\lsot}<\infty$ and $\|{\bf A}\|_{\lsott}<\infty$ respectively.
\end{defi}

\begin{nota} It is easy to show that $$\ell^2(\N^2, \B(H))\subsetneq \ell^2(\N, \ell^2_{SOT}(\N, \B(H))\subsetneq \lsott.$$
\end{nota}

As usual  we denote $\fk(t)=e^{ikt}$ for $k\in \Z$, and, given a complex Banach space $E$,  we write $\P(\T, E)= span\{e\fj: j\in \Z, e\in E\}$ for the $E$-valued trigonometric polynomials,
$\P_a(\T, E)= span\{e\fj: j\in \N, e\in E\}$ for the $E$-valued analytic polynomials. It is well-known that $\P(\T,E)$ is dense in $C(\T,E)$ and $L^p(\T,E)$ for $1\le p<\infty$. Also we shall use $H_0^2(\T, E)=\{f\in L^2(\T, E): \hat f(k)=0, k\le 0\}$, where $\hat f(k)=\int_0^{2\pi} f(t)\overline{ \fk(t)}\frac{dt}{2\pi}$  for $k\in \Z$. Recall that $H_0^2(\T, E)$ coincides with  the closure of  $\P_a(\T, E)$ with the norm in $L^2(\T, E)$.
Similarly $H_0^2(\T^2, E)=\{f\in L^2(\T^2, E): \hat f(k,j)=0,\quad  k,j\le 0\}$, where $\hat f(k,j)=\int_0^{2\pi}\int_0^{2\pi} f(t,s) \overline{ \fk(t)}\overline{  \fj(s)}\frac{dt}{2\pi}\ds$  for $k,j\in \Z$.

Let us now introduce some new spaces that we shall need later on.
\begin{defi} Let ${\bf T}=(T_n)\subset \B(H)$ and $\A=(\Tkj)\subset \B(H)$. We say that ${\bf T}\in \tilde H^2(\T, \B(H))$ whenever
$$\|{\bf T}\|_{\tilde H^2(\T, \B(H))}=\sup_N (\int_0^{2\pi}\|\sum_{n=1}^NT_n\fj(t)\|^2\dt)^{1/2}<\infty.$$
We say that $\A\in \tilde H^2(\T^2, \B(H))$ whenever
$$\|{\A}\|_{\tilde H^2(\T^2, \B(H))}=\sup_{N,M} (\int_0^{2\pi}\int_0^{2\pi}\|\sum_{j=1}^N\sum_{k=1}^M\Tkj\fj(t)\fk(s)\|^2\dt\ds)^{1/2}<\infty.$$
\end{defi}

\begin{nota} \label{nota1} $\tilde H^2(\T, \B(H))\nsubseteq H_0^2(\T, \B(H))$.

Consider $T_j=\widetilde{e_j\otimes e_j}$. Then
 for any $t\in [0,2\pi)$ and $N\in \N$,
$$\|\sum_{j=1}^N\widetilde{(e_j\otimes e_j})\fj(t)\|_{\B(H)}=\sup_{\|x\|=1}\|\sum_{j=1}^N\la x,e_j\ra\fj(t)e_j\|_H=1.$$
Hence
${\bf T}=(e_j\otimes e_j)_j\in \tilde H^2(\T, \B(H))$, but since $\|T_j\|=1$ then  ${\bf T}\notin  H_0^2(\T, \B(H)$ (since even ${\bf T}\notin L^1(\T, \B(H))$ because $\lim_{j\to\infty}\|T_j\|\neq 0$).
\end{nota}

\begin{prop}

(i) $\tilde H^2(\T, \B(H))\subsetneq \lsot$ and $\tilde H^2(\T^2, \B(H))\subsetneq \lsott$.

(ii) $\tilde H^2(\T, \B(H))\nsubseteq \ell^2(\N, \B(H))$ and $\ell^2(\N, \B(H)) \nsubseteq \tilde H^2(\T, \B(H))$.
 \end{prop}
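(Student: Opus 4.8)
The plan is to obtain the two inclusions in (i) from the vector-valued Parseval identity, and then to settle strictness together with the two non-inclusions in (ii) by exhibiting three explicit families of operators. The only delicate point is the last non-inclusion, where one has to exploit that $\B(H)$ carries no type $2$.

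For the inclusions in (i), fix ${\bf T}=(T_n)$ and $x\in H$ with $\|x\|=1$. Since $(e^{int})_n$ is orthonormal in $L^2(\T,\dt)$, Parseval in the Hilbert space $H$ together with the pointwise bound $\|\sum_{n=1}^N T_n(x)e^{int}\|_H\le\|\sum_{n=1}^N T_ne^{int}\|_{\B(H)}$ gives
\[
\sum_{n=1}^N\|T_n(x)\|_H^2=\intpi\Big\|\sum_{n=1}^N T_n(x)e^{int}\Big\|_H^2\dt\le\intpi\Big\|\sum_{n=1}^N T_ne^{int}\Big\|_{\B(H)}^2\dt .
\]
Taking $\sup_{\|x\|=1}$ and then $\sup_N$ yields $\|{\bf T}\|_{\lsot}\le\|{\bf T}\|_{\tilde H^2(\T,\B(H))}$, so $\tilde H^2(\T,\B(H))\subseteq\lsot$; the two-parameter Parseval identity gives $\tilde H^2(\T^2,\B(H))\subseteq\lsott$ in the same way.

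For strictness in (i) I would take $T_n=\widetilde{e_n\otimes e_1}$, so that $T_n(x)=\la x,e_n\ra e_1$. Then $\sum_n\|T_n(x)\|^2=\|x\|^2$ gives $\|{\bf T}\|_{\lsot}=1$, whereas $(\sum_{n=1}^N T_ne^{int})(x)=\big(\sum_{n=1}^N\la x,e_n\ra e^{int}\big)e_1$ has operator norm $\|\sum_{n=1}^N e^{-int}e_n\|=\sqrt N$ for every $t$, so $\|{\bf T}\|_{\tilde H^2(\T,\B(H))}=\sup_N\sqrt N=\infty$; a bijection $\pi:\N^2\to\N$ and $T_{kj}=\widetilde{e_{\pi(k,j)}\otimes e_1}$ produce the analogous example on $\T^2$. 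For $\tilde H^2(\T,\B(H))\nsubseteq\ell^2(\N,\B(H))$ I would reuse the sequence of Remark \ref{nota1}: $T_j=\widetilde{e_j\otimes e_j}$ has $\|{\bf T}\|_{\tilde H^2(\T,\B(H))}=1$, while $\|T_j\|=1$ forces $\sum_j\|T_j\|^2=\infty$.

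The remaining non-inclusion $\ell^2(\N,\B(H))\nsubseteq\tilde H^2(\T,\B(H))$ is the crux, and the place where I expect the main difficulty: one must produce a sequence that is square-summable in operator norm but whose trigonometric partial sums are unbounded in $L^2(\T,\B(H))$, which is possible precisely because $\B(H)$ has no type $2$. Concretely, on $\C^N$ the diagonal unitaries $V_n=\mathrm{diag}(e^{2\pi ikn/N})_{k=0}^{N-1}$ satisfy $\sum_{n=1}^N\|N^{-1/2}V_n\|^2=1$, yet a geometric-sum estimate shows $\|N^{-1/2}\sum_{n=1}^N V_ne^{int}\|=N^{-1/2}\max_k|\sum_{n=1}^N e^{i(t+2\pi k/N)n}|\ge\tfrac{2}{\pi}\sqrt N$ for every $t$, since for each $t$ some frequency $t+2\pi k/N$ lies within $\pi/N$ of $0$. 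I would then place countably many such normalized blocks on mutually orthogonal subspaces of $H$, scaling the $m$-th block (of size $N_m=m^4$) by $\varepsilon_m=1/m$: the resulting diagonal sequence has $\sum_n\|T_n\|^2=\sum_m\varepsilon_m^2<\infty$, so it lies in $\ell^2(\N,\B(H))$, while the partial sum ending the $m$-th block has $\intpi\|\cdot\|_{\B(H)}^2\dt\ge\tfrac{4}{\pi^2}\varepsilon_m^2N_m=\tfrac{4}{\pi^2}m^2\to\infty$, forcing the $\tilde H^2(\T,\B(H))$-norm to be infinite. The bookkeeping with orthogonal blocks is routine; the essential content is the per-block blow-up of the operator $L^2$-norm against the fixed $\ell^2$ norm of the coefficients.
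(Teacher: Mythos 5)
Your proof is correct, and for most of the statement it follows the same route as the paper: the inclusions in (i) are obtained from the Hilbert-valued Parseval/Plancherel identity exactly as in the paper; your strictness example $T_n=\widetilde{e_n\otimes e_1}$ is the paper's $\widetilde{e_n\otimes x}$ with $x=e_1$ (the paper passes to the two-variable case via matrices with a single row, you via a bijection $\N^2\to\N$; both work); and the first non-inclusion in (ii) is the sequence $\widetilde{e_j\otimes e_j}$ of Remark \ref{nota1} in both cases. The genuinely different step is the last non-inclusion $\ell^2(\N,\B(H))\nsubseteq\tilde H^2(\T,\B(H))$. The paper takes $H=L^2(\T)$ and the multiplication operators $T_j(f)=\frac{\fj}{j}f$: then $\|T_j\|=\frac1j$ is square-summable, while for every $t$ the partial sum $\sum_{j=1}^N T_j\fj(t)$ is multiplication by $\sum_{j=1}^N\frac{\fj(t)}{j}\fj$, whose sup-norm equals $\sum_{j=1}^N\frac1j$ because all phases align at $s=-t$; divergence of the harmonic series finishes the argument in two lines. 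Your construction (blocks of $N$ diagonal unitaries with roots-of-unity phases, the Dirichlet-kernel lower bound $\frac2\pi\sqrt N$ valid at every $t$, then orthogonal blocks with $N_m=m^4$ and weights $\varepsilon_m=\frac1m$) is correct as well: the extra modulation coming from the global indexing of the blocks is harmless, and the norm of a direct sum of diagonal blocks is the maximum of the block norms, so the bookkeeping you call routine indeed goes through. It is longer than the paper's example, but what it buys is an explicit, quantitative, finite-dimensional statement which makes visible that the failure already occurs in the copies of $\ell^\infty_N$ sitting inside $\B(H)$ as diagonal operators — which is also the hidden mechanism in the paper's multiplication-operator example. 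One caveat on your framing: the phrase that this is possible ``precisely because $\B(H)$ has no type $2$'' is not the sharp dividing line. For instance $L^4$ has type $2$, and yet the same Dirichlet-kernel computation shows the square-function inequality underlying the inclusion fails there too; the precise obstruction is failure of Fourier type $2$, equivalently (by Kwapie\'n's theorem) not being isomorphic to a Hilbert space. This imprecision is only in a side remark and does not affect your proof, which is self-contained.
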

 \begin{proof}
 (i) Both inclusions are immediate from Plancherel's theorem (which holds for Hilbert-valued functions). It suffices to see that there exists ${\bf T}\in \lsot\setminus \tilde H^2(\T, \B(H))$ because choosing matrices with a single row we obtain also a counterexample for the other inclusion.
 Now selecting $T_n=\widetilde{e_n\otimes x}\in \B(H)$ for a given $x\in H$ we clearly have ${\bf T}=(\widetilde{e_n\otimes x})_n\in \lsot$ with $\|{\bf T}\|_{\lsot}=\|x\|$. However, for any $t\in [0,2\pi)$ and $N\in \N$,
$$\|\sum_{n=1}^N(\widetilde{e_n\otimes x})\varphi_n(t)\|_{B(H)}=\|\widetilde{(\sum_{n=1}^Ne_n\varphi_n(t))\otimes x}\|_{\B(H)}=\|x\|\sqrt N,$$
showing that ${\bf T}\notin \tilde H^2(\T, \B(H)).$

(ii)
The example in Remark \ref{nota1}  shows that $\tilde H^2(\T, \B(H))\nsubseteq \ell^2(\N, \B(H))$.
Let us  now find ${\bf T}\in \ell^2(\N, \B(H))\setminus \tilde H^2(\T, \B(H))$.
Consider $H=L^2(\T)$ and ${\bf T}=(T_j)$ where $T_j: L^2(\T)\to L^2(\T)$ is given by
$T_j( f)= \frac{\fj}{j}f.$

 Clearly ${\bf T}\in \ell^2(\N, \B(H))$ since $\|T_j\|=\frac{1}{j}$ for all $j\in \N$. On the other hand, for each $t\in [0,2\pi)$ and $N\in \N$ one has that $(\sum_{j=1}^NT_j\fj(t))(f)= (\sum_{j=1}^N\frac{\fj(t)}{j}\fj)f$ and therefore
$$\|\sum_{j=1}^NT_j\fj(t)\|_{B(H)}= \|\sum_{j=1}^N\frac{\fj(t)}{j}\fj\|_{C(\T)}=
\sum_{j=1}^N\frac{1}{j}.$$
This shows that ${\bf T}\notin \tilde H^2(\T, \B(H)).$
\end{proof}

\section{Preliminaries on regular vector measures}

We recall some facts for vector measures that can  be found in \cite{DU, D}.
Let us consider the measure space $(\T, {\mathfrak B}(\T), m)$ where ${\mathfrak  B}(\T)$ stands for the Borel sets over $\T$ and $m$ for the Lebesgue measure on $\T$. Given a vector measure $\mu:{\mathfrak B}(\T)\to E$ and $B\in {\mathfrak B}(\T)$, we shall denote $|\mu|(B)$ and $\|\mu\|(B)$ the variation and semi-variation of $\mu$ of the set $B$ given by
$$|\mu|(B)=\sup\{ \sum_{A\in\pi}\|\mu(A)\|, A\in {\mathfrak B}(\T), \pi \hbox{ finite partition of } B\}$$
and
$$\|\mu\|(B)=\sup\{ |\la e^*,\mu\ra|(B): e^*\in E^*, \|e^*\|=1\}$$
 where $\la e^*,\mu\ra(A)=e^*(\mu(A))$ for all $A\in {\mathfrak B}(\T)$. Of course $|\mu|(\cdot)$ becomes a positive measure on $\mathfrak B(\T)$, while $\|\mu \|(\cdot)$ is only sub-additive in general. We shall denote $|\mu|=|\mu|(\T)$ and $\|\mu\|=\|\mu\|(\T)$.
 For dual spaces $E=F^*$ it is easy to see that
 $\|\mu\|=\sup\{ |\la\mu, f\ra|: f\in F, \|f\|=1\}$
 where $\la \mu,f\ra(A)=\mu(A)(f)$.

  In what follows we shall consider regular vector measures, that is to say vector measures $\mu:{\mathfrak B}(\T)\to E$  such that for each $\varepsilon>0$ and $B\in {\mathfrak B}(\T)$ there exists a compact set $K$, an open set $O$ such that $K\subset B\subset O$ with $\|\mu\|(O\setminus K)<\varepsilon$.   Let us denote by ${\mathfrak M}(\T, E)$ and $M(\T, E)$ the spaces of regular Borel measures with values in $E$ endowed with the norm given the semi-variation and variation respectively.
 Of course $M(\T, E)\subsetneq {\mathfrak M}(\T, E)$ when $E$ is infinite dimensional.

  It is well known that the space ${\mathfrak M}(\T, E)$   can be identified with  the space of weakly compact  linear operators $T_{\mu}:C(\T)\to E$ and that $\|T_{\mu}\|= \|\mu\|$ (see \cite[Chap. 6]{DU}).
Hence, for each  $\mu\in {\mathfrak M}(\T, E)$ and $k\in \Z$ we can define (see \cite{B}) the $k$-Fourier coefficient by
 \be\label{fcd}\hat\mu(k)=T_\mu(\fk).
 \ee

Also the description of measures in $M(\T,E)$ can be done using absolutely summing operators (see \cite{DJT}) and the variation can be described as the norm in such space (see \cite{DU}) but we shall not follow this approach. On the other hand since we deal with either $E=\B(H)$ or  $E=H$ we have at our disposal Singer's theorem (see for instance \cite{S, S2, H}), which in the case of dual spaces $E=F^*$ asserts that $M(\T, E)=C(\T, F)^*$. In other words there exists a bounded map $\Psi_\mu:C(\T, F)\to \C$ with $\|\Psi_\mu\|=|\mu|$ such that
 \be
 \Psi_\mu(y\phi)= T_\mu(\phi)(y), \quad \phi\in C(\T), y\in F.
 \ee
 In particular for  $k\in \Z$ one has $\hat\mu(k)(y)=\Psi_\mu(y\fk)$  for each $y\in F$.

As mentioned above since
$ M(\T, \L(X,Y^*))=C(\T, X\hat\otimes Y)^*$, for each $\mu\in M(\T, \L(X,Y^*))$ we can associate two operators $T_\mu$ and $\Psi_\mu$. Of course the connection between them  is given by the formula
\be \label{mea1}
 T_{\mu}(\phi)(x)(y)=\Psi_{\mu}( (x\otimes y)\phi), \quad \phi\in C(\T), x\in X,y\in Y.
\ee

There is still one more possibility to be considered using the strong operator topology, namely $\Phi_{\mu}: C(\T,X) \to Y^*$ defined by
\be
\Phi_{\mu}(f)(y)=\Psi_{\mu}(f\otimes y), \quad f\in C(\T,X), y\in Y
\ee
where $f\otimes y(t)=f(t)\otimes y$.

Therefore given $\mu\in {\mathfrak M}(\T, \L(X,Y^*))$ we have three different linear operators defined on the corresponding spaces of polynomials: $T_\mu: \P(\T)\to \L(X,Y^*)$, $\Psi_\mu: \P(\T, X\hat\otimes Y)\to \C$ and $\Phi_{\mu}: \P(\T,X) \to Y^*$ defined by the formulae
\be \label{formula1}
T_\mu(\sum_{j=-M}^N\alpha_j\fj)= \sum_{j=-M}^N\alpha_{j}\hat\mu(j), \quad N,M\in \N, \alpha_j\in \C,
\ee
\be \label{formula2}
\Psi_{\mu}(\sum_{j=-M}^N (\sum_{n=1}^{n_j}x_{jn})\otimes (\sum_{m=1}^{m_j}y_{jm})\fj)= \sum_{j=-M}^N(\sum_{n=1}^{n_j} \sum_{m=1}^{m_j} \hat\mu(j)(x_{jn})(y_{jm})),
\ee
\be \label{formula3}
 \Phi_{\mu}(\sum_{j=-M}^N x_j\fj)= \sum_{j=-M}^N\hat\mu(j)(x_j), \quad N,M\in \N, x_j\in X.
\ee

When restricting to the case $Y^*=H$ we obtain the following connection between them.
\be\label{conn} \J T_{\mu}(\psi)(x\otimes y)= \Psi_{\mu}((x\otimes y)\psi)=\la  \Phi_{\mu}(x\psi),y\ra, \quad \psi\in \P(\T), x,y\in H.\ee

Given $\mu\in {\mathfrak M}(\T, \L(X,Y^*))$ and  $x\in H$ let us denote,  by $\mu_x$ the $Y^*$-valued  measure given by $$\mu_x(A)=\mu(A)(x), \quad A\in {\mathfrak B}(\T).$$ It is elementary to see that $\mu_x$ is a regular measure because one can associate the weakly compact operator $T_{\mu_x}=\delta_x\circ T_{\mu}:C(\T)\to Y^*$ where  $\delta_x$ stands for the operator  $\delta_x:\L(X,Y^*)\to Y^*$ given by $\delta_x(T)=T(x)$ for  $T\in \L(X,Y^*)$.

If $\mu\in {\mathfrak M}(\T, \B(H))$, $k\in \Z$ and $x,y\in H$ then $\mu_x\in {\mathfrak M}(\T,H)$, \be\label{mux} \la\mu_x(A), y\ra= \J\mu(A)(x\otimes y), \quad A\in \mathfrak B(\T)\ee
and
\be\label{fc}
\la\hat\mu(k)(x),y\ra=\la \hat\mu_x(k),y\ra= \J\hat\mu(k)(x\otimes y).
\ee

Let us introduce a new space of measures appearing in the case $E=\B(H)$.
\begin{defi} Let $\mu \in {\mathfrak M}(\T,\B(H))$. We say that $\mu\in M_{SOT}(\T,\B(H))$ if $\mu_x\in M(\T, H)$ for any $x\in H$.
We write
$$\|\mu\|_{SOT}=\sup\{ |\mu_x|: x\in H, \|x\|=1\}.$$
\end{defi}

\begin{prop} $M(\T, \B(H))\subsetneq M_{SOT}(\T, \B(H))\subsetneq {\mathfrak M}(\T, \B(H)).$
\end{prop}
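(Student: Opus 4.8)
The plan is to dispose of the two inclusions quickly and then devote the bulk of the work to two explicit counterexamples, one for each strictness. The inclusions themselves are soft. If $\mu\in M(\T,\B(H))$ and $\|x\|=1$, then for every finite Borel partition $\pi$ of $\T$ we have $\sum_{A\in\pi}\|\mu_x(A)\|=\sum_{A\in\pi}\|\mu(A)(x)\|\le\sum_{A\in\pi}\|\mu(A)\|\le|\mu|$; taking the supremum over $\pi$ gives $|\mu_x|\le|\mu|$. Since $\mu_x$ is already known to be regular (it corresponds to the weakly compact operator $\delta_x\circ T_\mu$), this shows $\mu\in M_{SOT}(\T,\B(H))$ with $\|\mu\|_{SOT}\le|\mu|$. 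The inclusion $M_{SOT}(\T,\B(H))\subseteq{\mathfrak M}(\T,\B(H))$ is immediate from the definition; one may add the norm estimate $\|\mu\|\le\|\mu\|_{SOT}$ by noting, via (\ref{mux}), that on an elementary tensor $x\otimes y$ with $\|x\|,\|y\|\le 1$ one has $|\langle\mu,x\otimes y\rangle|(\T)=|\langle\mu_x(\cdot),y\rangle|(\T)\le|\mu_x|\le\|\mu\|_{SOT}$, and then using that the unit ball of $H\hat\otimes H$ is the closed convex hull of such tensors together with the fact that $f\mapsto|\langle\mu,f\rangle|(\T)$ is a seminorm.

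For the strictness $M(\T,\B(H))\subsetneq M_{SOT}(\T,\B(H))$ I would build a diagonal measure. Fix the basis $(e_n)$, partition $\T$ into arcs $I_n$ of lengths $\ell_n=m(I_n)$, choose $c_n>0$ with $c_n\ell_n=1/n$, and set $g(t)=c_n\,\widetilde{e_n\otimes e_n}$ for $t\in I_n$, defining $\mu$ by $\mu(A)(x)=\int_A g(t)(x)\,dm(t)$. Then $T_\mu(\phi)$ is the diagonal operator with entries $c_n\int_{I_n}\phi\,dm$, whose moduli are at most $\|\phi\|_\infty/n$, so $T_\mu$ carries the unit ball of $C(\T)$ into the norm-compact set of diagonal operators with $n$-th entry of modulus $\le 1/n$; hence $T_\mu$ is compact, and $\mu\in{\mathfrak M}(\T,\B(H))$. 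Moreover $|\mu_x|=\int\|g(t)(x)\|\,dm=\sum_n c_n\ell_n|\langle x,e_n\rangle|\le(\sum_n 1/n^2)^{1/2}\|x\|$, so $\mu\in M_{SOT}$; yet $\mu(I_n)=c_n\ell_n\,\widetilde{e_n\otimes e_n}$ has norm $1/n$, whence $|\mu|\ge\sum_n\|\mu(I_n)\|=\sum_n 1/n=\infty$ and $\mu\notin M(\T,\B(H))$.

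For the strictness $M_{SOT}(\T,\B(H))\subsetneq{\mathfrak M}(\T,\B(H))$ I would instead concentrate an infinite-variation $H$-valued measure in a single column. Fix a unit vector $x_0$ and, with $I_n,\ell_n,c_n$ as above, set $v(t)=c_n e_n$ on $I_n$ and $g(t)=\widetilde{x_0\otimes v(t)}$, so $g(t)(x)=\langle x,x_0\rangle v(t)$, and define $\mu(A)(x)=\int_A g(t)(x)\,dm=\langle x,x_0\rangle\int_A v\,dm$. Here $T_\mu=\iota\circ w$, where $w(\phi)=\int\phi\,v\,dm\in H$ is bounded from $C(\T)$ into $H$ and $\iota(h)=\widetilde{x_0\otimes h}$; since $H$ is reflexive $w$ is weakly compact, hence so is $T_\mu$, giving $\mu\in{\mathfrak M}$. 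The semivariation is finite because for unit $x,y$ one has $|\langle\mu,x\otimes y\rangle|(\T)=|\langle x,x_0\rangle|\int|\langle v,y\rangle|\,dm\le(\sum_n(c_n\ell_n)^2)^{1/2}=(\sum_n 1/n^2)^{1/2}$. On the other hand $\mu_{x_0}(I_n)=\int_{I_n}v\,dm=c_n\ell_n e_n$ has norm $1/n$, so $|\mu_{x_0}|\ge\sum_n 1/n=\infty$, i.e. $\mu_{x_0}\notin M(\T,H)$ and $\mu\notin M_{SOT}$.

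The genuinely delicate point, and the step I expect to be the main obstacle, is not any of the norm estimates, which all reduce to Cauchy--Schwarz and the single choice $c_n\ell_n=1/n$, but rather checking that each candidate $\mu$ is a \emph{regular} $\B(H)$-valued measure. This is exactly the place where a naive example (such as $\mu(A)=$ multiplication by the indicator of $A$ on $H=L^2(\T)$) fails, because its representing operator embeds $C(\T)$ isometrically into $\B(H)$ and so is not weakly compact. My two constructions are engineered to avoid this: in the diagonal example weak compactness comes from compactness of a diagonal operator with entries tending to $0$, and in the column example from the reflexivity of $H$, which forces every bounded operator into $H$ to be weakly compact. In both cases regularity is then free, via the identification of ${\mathfrak M}(\T,\B(H))$ with the weakly compact operators $T:C(\T)\to\B(H)$ recalled before (\ref{fcd}).
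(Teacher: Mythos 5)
Your proof is correct, and it follows the same skeleton as the paper (soft norm-one inclusions plus two counterexamples exploiting the gap between semivariation and variation of an $H$-valued measure), but the counterexamples themselves are built differently. The paper starts from a single \emph{abstract} regular measure $\nu:{\mathfrak B}(\T)\to H$ with $\|\nu\|<\infty$ but $|\nu|=\infty$ (whose existence is cited via a Pettis-but-not-Bochner integrable function) and transfers it to $\B(H)$ by two rank-one embeddings: $\mu_1(A)=\widetilde{\nu(A)\otimes y_0}$, i.e.\ $\mu_1(A)(x)=\la x,\nu(A)\ra y_0$, gives $M_{SOT}\setminus M$ since $|(\mu_1)_x|=|\la x,\nu\ra|<\infty$ while $\|\mu_1(A)\|=\|\nu(A)\|$ forces $|\mu_1|=|\nu|=\infty$; and $\mu_2(A)(x)=\la x,y_0\ra\nu(A)$ gives ${\mathfrak M}\setminus M_{SOT}$ since $(\mu_2)_{y_0}=\nu$. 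Weak compactness of both $T_{\mu_i}$ is inherited from that of $T_\nu$. Your second (column) example is precisely the paper's $\mu_2$ with a concrete choice of $\nu$, namely the indefinite Pettis integral of $v=c_ne_n$ on $I_n$ with $c_n m(I_n)=1/n$; your first example, the diagonal measure $\mu(A)=\sum_n c_n m(A\cap I_n)\,\widetilde{e_n\otimes e_n}$, is structurally different from the paper's rank-one $\mu_1$, and you certify membership in ${\mathfrak M}$ by an honest compactness argument (the diagonal operators with $n$-th entry of modulus at most $1/n$ form a norm-compact set) rather than by inheritance from $T_\nu$. What each approach buys: the paper's is more economical and gives the norm identities $\|\mu_1\|_{SOT}=\|\nu\|$, $|\mu_1|=|\nu|$ at once; yours is self-contained and elementary, replacing the appeal to the existence of non-Bochner Pettis functions by explicit Cauchy--Schwarz estimates ($\sum 1/n^2<\infty$ against $\sum 1/n=\infty$), and you additionally substantiate the embedding estimate $\|\mu\|\le\|\mu\|_{SOT}$, which the paper calls trivial, via the closed convex hull description of the unit ball of $H\hat\otimes H$ together with lower semicontinuity of $f\mapsto|\la\mu,f\ra|(\T)$ (a supremum of continuous seminorms). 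Your closing remark is also apt: the multiplication-operator measure on $L^2(\T)$ is not regular as a $\B(H)$-valued measure, so any counterexample must be engineered, as both you and the paper do, to have a weakly compact representing operator.
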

\begin{proof}  The inclusions between the spaces follow from the inequalities
$$|\la \mu(A)(x), y\ra|\le \|\mu(A)(x)\|\|y\|\le \|\mu(A)\|\|x\|\|y\|$$
which leads to
$$|\la \mu_x, y\ra| \le |\mu_x|\|y\|\le |\mu|\|x\|\|y\|$$
and the corresponding embeddings with norm $1$ trivially follow.

Let $H=\ell^2$. We shall find   measures $\mu_1\in M_{SOT}(\T,\B(H))\setminus M(\T,\B(H))$ and $\mu_2\in {\mathfrak M}(\T, \B(H))\setminus M_{SOT}(\T, \B(H))$.
Both can be constructed relying on a similar argument.
 Let $y_0\in H$ with $\|y_0\|=1$ and select a Hilbert-valued regular measure  $\nu$  with $|\nu|=\infty$ (for instance take a Pettis integrable, but not Bochner integrable function $f:\T\to H$ given by $t\to (f_n(t))_n$ and $\nu(A)=(\int_A f_n(t)\dt)_n$ for  $A\in {\mathfrak B}(\T)$). Denote $T_\nu:C(\T)\to H$ the corresponding bounded (and hence weakly compact) operator associated to $\nu$ with $\|T_\nu\|=\|\nu\|$.

Define $$\mu_1(A)(x)=\la x, \nu(A)\ra y_0, \quad A\in {\mathfrak B}(\T)$$
and
$$\mu_2(A)(x)=\la x, y_0\ra \nu(A), \quad A\in {\mathfrak B}(\T).$$

In other words, if $J_y:H\to \B(H)$ and $I_y:H\to \B(H)$ stand for the operators $$J_y(x)(z)=\la z,x\ra y, \quad I_y(x)(z)=\la x,y\ra z, \quad x,y,z\in H$$
then we have that $T_{\mu_1}=J_{y_0}T_\nu$ and $T_{\mu_2}=I_{y_0}T_\nu$ are weakly compact. Hence  $\mu_1,\mu_2\in {\mathfrak M}(\T,\B(H))$.

Note that $|(\mu_1)_x |=|\la x,\nu\ra|$ and $|(\mu_2)_x |=|\la x,y_0 \ra| |\nu|, \quad x\in H$. Hence
$$\|\mu_1\|_{SOT}= \|\nu\|, \quad \|\mu_2\|_{SOT}=|\nu|.$$
Also notice that $\|\mu_1(A)\|_{\B(H)}=\|\nu(A)\|_H$, and therefore
$|\mu_1|= |\nu|,$
which gives the desired results.
\end{proof}

\begin{defi}

Let $\mu:{\mathfrak B}(\T)\to \L(X,Y^*)$ be a vector measure. We define ``the adjoint measure" $\mu^*:{\mathfrak B}(\T)\to \L(Y, X^*)$  by the formula
\be \label{dualmes0}
\mu^*(A)(y)(x)= \mu_x(A)(y), \quad A\in {\mathfrak B}(\T), x\in X, y\in Y.
\ee
\end{defi}

In the case that $\mu\in {\mathfrak M}(\T, \B(H))$  with the identification $Y^*=H$, one clearly has that
\be \label{dualmes}
\la x, \mu^*(A)(y)\ra= \la \mu(A)(x),y\ra, \quad A\in {\mathfrak B}(\T), x,y\in H.
\ee

 \begin{nota}  $\mu^*$ belongs to ${\mathfrak M}(\T, \B(H))$ (respect. $ M(\T, \B(H))$) if and only if $\mu$ belongs to ${\mathfrak M}(\T, \B(H))$ (respect. $M(\T, \B(H)))$). Moreover $\|\mu\|=\|\mu^*\|$ (respect.$|\mu|=|\mu^*|$.)

  The results follow using that  $T_{\mu^*}(\phi)= (T_{\mu}(\phi))^*$ for any $\phi\in C(\T)$ and $\|\mu(A)\|=\|\mu^*(A)\|$ for any $A\in {\mathfrak B}(\T)$.

\end{nota}
Let us describe the norm in $M_{SOT}(\T, \B(H)$ using the adjoint measure.

\begin{prop} Let $\mu\in {\mathfrak M}(\T, \B(H))$. Then
$\mu\in M_{SOT}(\T, \B(H))$ if and only if $\Phi_{\mu^*}\in \L(C(\T,H),H)$.
Moreover $\|\mu\|_{SOT}=\|\Phi_{\mu^*}\|.$
\end{prop}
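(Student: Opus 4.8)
The plan is to reduce everything to a single duality identity relating the operator $\Phi_{\mu^*}$ to the family of $H$-valued measures $\mu_x$, and then to read off both the equivalence and the norm equality from Singer's theorem. Recall that $\mu_x\in{\mathfrak M}(\T,H)$ for every $x\in H$, that $H=H^*$, so that Singer's theorem gives $M(\T,H)=C(\T,H)^*$, and that $\mu_x\in M(\T,H)$ holds precisely when the functional $g\mapsto \Psi_{\mu_x}(g)$, defined on the dense subspace $\P(\T,H)\subset C(\T,H)$, is bounded, in which case $|\mu_x|=\|\Psi_{\mu_x}\|$. Note also that $\mu^*\in{\mathfrak M}(\T,\B(H))$ by the preceding Remark, so $\Phi_{\mu^*}$ is defined on $\P(\T,H)$ with values in $H$.

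First I would record the key identity: for every $g=\sum_j y_j\fj\in\P(\T,H)$ and every $x\in H$,
\be \label{keyid} \la \Phi_{\mu^*}(g),x\ra=\Psi_{\mu_x}(g). \ee
By linearity it suffices to check a generator $g=y\fj$. Applying the connection formula (\ref{conn}) to $\mu^*$ gives $\la \Phi_{\mu^*}(y\fj),x\ra=\J T_{\mu^*}(\fj)(y\otimes x)$; since $T_{\mu^*}(\fj)=(T_\mu(\fj))^*=(\hat\mu(j))^*$ by the Remark, this equals $\la (\hat\mu(j))^* y,x\ra=\la y,\hat\mu(j)(x)\ra$. On the other hand, $T_{\mu_x}=\delta_x\circ T_\mu$ yields $T_{\mu_x}(\fj)=\hat\mu(j)(x)$, so Singer's formula $\Psi_{\mu_x}(y\phi)=T_{\mu_x}(\phi)(y)$ with $\phi=\fj$ gives $\Psi_{\mu_x}(y\fj)=\la y,\hat\mu(j)(x)\ra$ as well. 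The two computations agree, proving (\ref{keyid}). (Any ambiguity in the Riesz identification only affects conjugation and is harmless, since only moduli enter below.)

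With (\ref{keyid}) in hand the two directions are routine. For $(\Leftarrow)$, assume $\Phi_{\mu^*}\in\L(C(\T,H),H)$. For $\|x\|=1$ and $g\in\P(\T,H)$ with $\|g\|_{C(\T,H)}\le 1$, the identity gives $|\Psi_{\mu_x}(g)|=|\la \Phi_{\mu^*}(g),x\ra|\le\|\Phi_{\mu^*}(g)\|\le\|\Phi_{\mu^*}\|$, so $\Psi_{\mu_x}$ is bounded on the dense subspace $\P(\T,H)$; hence $\mu_x\in M(\T,H)$ with $|\mu_x|\le\|\Phi_{\mu^*}\|$, and taking the supremum over $\|x\|=1$ gives $\mu\in M_{SOT}(\T,\B(H))$ with $\|\mu\|_{SOT}\le\|\Phi_{\mu^*}\|$. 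For $(\Rightarrow)$, assume $\mu\in M_{SOT}(\T,\B(H))$. Then for $g\in\P(\T,H)$ with $\|g\|\le 1$, using (\ref{keyid}) and $|\Psi_{\mu_x}(g)|\le\|\Psi_{\mu_x}\|=|\mu_x|$,
$$\|\Phi_{\mu^*}(g)\|=\sup_{\|x\|=1}|\la \Phi_{\mu^*}(g),x\ra|=\sup_{\|x\|=1}|\Psi_{\mu_x}(g)|\le\sup_{\|x\|=1}|\mu_x|=\|\mu\|_{SOT}.$$
By density of $\P(\T,H)$ in $C(\T,H)$, $\Phi_{\mu^*}$ extends to an operator in $\L(C(\T,H),H)$ with $\|\Phi_{\mu^*}\|\le\|\mu\|_{SOT}$. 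Combining the two inequalities proves the equivalence and the equality $\|\mu\|_{SOT}=\|\Phi_{\mu^*}\|$.

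The main obstacle is establishing (\ref{keyid}) with the bookkeeping done correctly, that is, tracking the adjoint Fourier coefficient $T_{\mu^*}(\fj)=(\hat\mu(j))^*$ together with the Riesz identification so that the pairing $\la \Phi_{\mu^*}(g),x\ra$ matches the Singer functional $\Psi_{\mu_x}(g)$ exactly (up to a harmless conjugate). Once this identity is in place, the remainder is a standard duality argument on polynomials combined with Singer's theorem.
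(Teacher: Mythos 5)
Your proof is correct and follows essentially the same route as the paper: the paper establishes the identity $S_\mu(y)(x\fk)=\la y,\Phi_{\mu^*}(x\fk)\ra$ (i.e.\ that $x\mapsto\mu_x$ is the adjoint of $\Phi_{\mu^*}$ under Singer's duality $M(\T,H)=C(\T,H)^*$), verified on generators via $\widehat{\mu^*}(k)=\hat\mu(k)^*$ and extended by linearity and density, which is exactly your key identity $\la\Phi_{\mu^*}(g),x\ra=\Psi_{\mu_x}(g)$. The only difference is presentational: the paper packages the two norm inequalities into the single statement that adjoint operators have equal norms, while you write them out explicitly.
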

\begin{proof} By definition $\mu \in M_{SOT}(\T, \B(H))$ if and only if the operator $S_\mu(x)= \mu_x$ is well defined and belongs to $\L(H,M(\T,H))$.  Moreover $\|\mu \|_{SOT} =\|S_\mu\|$. The result follows if we show that $S_\mu$ is the adjoint of $\Phi_{\mu^*}$. Recall that, identifying  $H=H^*$, we have $\mu^*\in {\mathfrak M}(\T, \B(H))$.  Hence $\Phi_{\mu^*}: \P(\T,H)\to H$ is generated by linearity using
$$\Phi_{\mu^*}(x\fk)=\widehat{\mu^*}(k)(x)= \hat{\mu}(k)^*(x) , \quad x\in H, k\in \Z.$$ Therefore, if $k\in \Z$, $x,y\in H$, since $M(\T,H)=(C(\T,H))^*$, we have
$$S_\mu(y)( x\fk)=\Psi_{\mu_y}(x\fk)=\la \widehat{\mu_y}(k),x\ra= \la \hat{\mu}(k)(y),x\ra=\la y,\Phi_{\mu^*}(x\fk)\ra.$$
By linearity we extend to $\la y,\Phi_{\mu^*}(x\phi)\ra= S_\mu(y)( x\phi)$ for any polynomial $\phi$ and since $\P(\T,H)$ is dense in $C(\T,H)$ we obtain the result. This completes the proof.
\end{proof}

 Let us consider  the following subspace of regular measures which plays an important role in what follows.

\begin{defi}\label{vinf} Let us write $V^\infty(\T,E)$ for the subspace   of those measures $\mu\in {\mathfrak M}(\T, E)$ such that there exists $C>0$ with \be \label{vinffor} \|\mu(A)\|\le C m(A), \quad  A\in {\mathfrak B}(\T).\ee
 We define
  $$\|\mu\|_\infty=\sup\{ \frac{\|\mu(A)\|}{m(A)}: m(A)>0\}.$$
  \end{defi}
  It is clear that any $\mu\in V^\infty(\T,\B(H))$ also belongs to $M(\T, \B(H))$ and it is absolutely continuous with respect to  $m$.

Let us point out two more possible descriptions of $V^\infty(\T,E)$. One option is to look at $V^\infty(\T,E)=\L(L^1(\T), E)$ (see \cite[page 261]{D}), that is to say that $T_{\mu}$ has a bounded extension to $L^1(\T)$. Hence a measure $\mu\in {\mathfrak M}(\T, E)$ belongs to $V^\infty(\T, E)$ if and only if \be\|T_\mu(\psi)\|\le C \|\psi\|_{L^1(\T)}, \quad \psi\in C(\T).\ee Moreover
  $\|T_{\mu}\|_{L^1(\T)\to E}=\|\mu\|_\infty.$

 In the case that $E=F^*$ also one has that  $V^\infty(\T, E)=L^1(\T,F)^*$, that is the dual of the space of Bochner integrable functions.  In this case a measure $\mu\in V^\infty(\T, E)$ if and only if $\Psi_{\mu}$ has a bounded extension to $L^1(\T, F)^*$, that is
 \be\|\Psi_\mu(p)\|\le C \|p\|_{L^1(\T, F)}, \quad p\in \P(\T,  F ).\ee Moreover
  $\|\Psi_{\mu}\|_{L^1(\T,F)^*}=\|\mu\|_\infty.$

Although measures in $V^\infty(\T,\B(H))$ are absolutely continuous with respect to $m$, the reader should be aware that they might not have a Radon-Nikodym derivative in $L^1(\T,E)$ (see \cite[Chap. 3]{DU}).

 For the sake of completeness we give an example for $E=\B(H)$ of such a situation.

 \begin{prop}
 Let $H=\ell^2$ and $\mu\in {\mathfrak M}(\T, \B(H))$ such that
$T_\mu\in \L( C(\T), \B(H))$ is given by $$T_\mu(\phi)=\sum_{n=1}^\infty\hat\phi(n)\widetilde{e_n\otimes e_n}.$$
Then $\mu\in  V^\infty(\T, \B(H))$ with $\|\mu\|_\infty=1$, $$\hat\mu(k)=\left\{
                                                                           \begin{array}{ll}
                                                                             \widetilde{e_k\otimes e_k}
& k\ge 1 \\
                                                                             0, & k\le 0
                                                                           \end{array}
                                                                         \right.
 $$ but it does not have a Radon-Nikodym derivative in $L^1(\T, \B(H))$.
\end{prop}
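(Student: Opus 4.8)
The plan is to verify three separate assertions about the measure $\mu$ determined by the prescribed operator $T_\mu$: first that $\mu$ genuinely lies in $V^\infty(\T,\B(H))$ with $\|\mu\|_\infty=1$, second that its Fourier coefficients are as claimed, and third — the substantive part — that $\mu$ admits no Radon--Nikodym density in $L^1(\T,\B(H))$. The Fourier coefficient computation is immediate from the defining formula (\ref{fcd}): since $\hat\mu(k)=T_\mu(\fk)$ and $\widehat{\fk}(n)=\delta_{kn}$, we get $\hat\mu(k)=\widetilde{e_k\otimes e_k}$ for $k\ge 1$ and $\hat\mu(k)=0$ for $k\le 0$, exactly as stated.

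For the membership in $V^\infty$, I would use the characterization $V^\infty(\T,\B(H))=\L(L^1(\T),\B(H))$ recalled just above the statement, so it suffices to show $\|T_\mu(\phi)\|_{\B(H)}\le\|\phi\|_{L^1(\T)}$ for $\phi\in C(\T)$, with equality of the norms giving $\|\mu\|_\infty=1$. The key computation is that for $x=\sum_n\la x,e_n\ra e_n\in\ell^2$ one has $T_\mu(\phi)(x)=\sum_n\hat\phi(n)\la x,e_n\ra e_n$, a diagonal operator whose operator norm equals $\sup_n|\hat\phi(n)|\le\|\phi\|_{L^1(\T)}$ by the trivial bound $|\hat\phi(n)|\le\|\phi\|_{L^1(\T)}$. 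This gives $\|\mu\|_\infty\le 1$; for the reverse inequality, testing against $\phi=\fk$ (or an approximate identity concentrating Fourier mass at a single frequency) yields $\|T_\mu(\fk)\|=\|\widetilde{e_k\otimes e_k}\|=1$ while $\|\fk\|_{L^1(\T)}=1$, forcing $\|\mu\|_\infty=1$.

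**The hard part will be** establishing the non-existence of a Radon--Nikodym derivative, and here I would argue by contradiction. Suppose $g\in L^1(\T,\B(H))$ satisfied $\mu(A)=\int_A g\,dm$ for all Borel $A$; then necessarily $\hat g(k)=\hat\mu(k)$ for all $k\in\Z$, so $\hat g(k)=\widetilde{e_k\otimes e_k}$ for $k\ge 1$ and $\hat g(k)=0$ otherwise. The strategy is to exploit the Riemann--Lebesgue lemma, which holds for Bochner integrable functions: any $g\in L^1(\T,\B(H))$ must have $\|\hat g(k)\|_{\B(H)}\to 0$ as $|k|\to\infty$. But $\|\hat g(k)\|=\|\widetilde{e_k\otimes e_k}\|=1$ for every $k\ge 1$, which does not tend to zero. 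This contradiction shows no such $g$ exists. This is in fact exactly the obstruction already flagged in Remark \ref{nota1}, where the same family $(\widetilde{e_k\otimes e_k})_k$ was used to show membership failure in $L^1(\T,\B(H))$ precisely because $\lim_k\|\widetilde{e_k\otimes e_k}\|\ne 0$.

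**The one subtlety I would be careful about** is justifying that a Bochner density would force $\hat g(k)=\hat\mu(k)$ and that Riemann--Lebesgue applies: the former is the standard fact that Fourier coefficients of an absolutely continuous vector measure coincide with those of its density, and the latter follows by approximating $g$ in $L^1(\T,\B(H))$ by $\B(H)$-valued trigonometric polynomials (dense by the density of $\P(\T,E)$ in $L^1(\T,E)$ noted in the preliminaries), whose high Fourier coefficients vanish, combined with the uniform bound $\|\hat g(k)\|\le\|g\|_{L^1(\T,\B(H))}$. Assembling these three pieces completes the proof.
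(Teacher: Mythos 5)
Your proposal is correct and takes essentially the same route as the paper's proof: the bound $\|T_\mu(\phi)\|_{\B(H)}=\sup_{n\ge 1}|\hat\phi(n)|\le\|\phi\|_{L^1(\T)}$ for the diagonal operator (the paper phrases it via $|\la T_\mu(\phi)(x),y\ra|$), the test against $\varphi_k$ to get $\|\mu\|_\infty=1$, and the Riemann--Lebesgue contradiction ($\|\hat f(k)\|\to 0$ for $f\in L^1(\T,\B(H))$ versus $\|\hat\mu(k)\|=1$) to rule out a Radon--Nikodym derivative. Your extra care in justifying vector-valued Riemann--Lebesgue by density of trigonometric polynomials is a fine addition that the paper leaves implicit.
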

\begin{proof}
Let us show that $T_\mu$ defines a continuous operator from $L^1(\T)$ to $\B(H)$ with norm $1$. In such a case using that the inclusion $C(\T)\to L^1(\T)$ is weakly compact  one automatically has that $\mu\in {\mathfrak M}(\T, \B(H))$.  For $x=(\alpha_n)\in H$ and $y=(\beta_n)\in H$ one has
\ba
|\la T_\mu(\phi)(x),y\ra|&=&| \sum_{n=1}^\infty\hat\phi(n)\alpha_n\beta_n|\\
&\le& \sup_{n\ge 1}|\hat\phi(n)|\|x\|\|y\|\\
&\le&\|\phi\|_{L^1(\T)}\|x\|\|y\|.
\ea
This gives that $\mu\in  V^\infty(\T, \B(H))$ and $\|\mu\|_\infty\le 1$.
Using that $T_{\mu}(\fj)=\widetilde{e_j\otimes e_j}$ and $\|\widetilde{e_j\otimes e_j}\|_{\B(H)}=1$ we get the equality of norms.

The result on Fourier coefficients is obvious. To show  that $\mu$ does not have a Bochner integrable  Radon-Nikodym derivative follows now using that otherwise $\hat\mu(k)=\hat f(k)$  for some $f\in L^1(\T, \B(H))$ which implies that $\|\hat f(k)\|\to 0$ as $k\to\infty$ while  $\|\hat \mu(k)\|=1$ for $k\ge 1$. This completes the proof.
\end{proof}

We finish this section with a known characterization of measures in $M(\T,F^*)$ to be used later on, that we include for sake of completeness.
\begin{lema} \label{convo} Let $E=F^*$ be a dual Banach space and $\mu\in {\mathfrak M}(\T,E)$. For each $0<r<1$ we define
\be \label{conv}
P_r*\mu(t)=\sum_{k\in \Z} \hat\mu(k) r^{|k|}\fk(t), \quad t\in [0,2\pi).
\ee
Then

(i) $P_r*\mu\in C(\T,E)$  and $\|P_r*\mu\|_{ C(\T,E)}\le \|\mu\|\frac{1+r}{1-r}$ for any $0<r<1$.

(ii) $\mu\in M(\T, E)$ if and only if $\sup_{0<r<1} \|P_r*\mu\|_{L^1(\T,E)}<\infty.$
Moreover $$|\mu|= \sup_{0<r<1}\|P_r*\mu\|_{L^1(\T,E)}.$$
\end{lema}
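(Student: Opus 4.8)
The plan is to recognize $P_r$ as the Poisson kernel and reduce everything to its two basic properties together with Singer's duality $M(\T,E)=C(\T,F)^*$. Write $P_r(t)=\sum_{k\in\Z}r^{|k|}\fk(t)$; this is a nonnegative function in $C(\T)$ with $\int_0^{2\pi}P_r(t)\dt=1$ and $\|P_r\|_{C(\T)}=P_r(0)=\frac{1+r}{1-r}$. Since $\hat\mu(k)=T_\mu(\fk)$ and $\|T_\mu\|=\|\mu\|$, one has $\|\hat\mu(k)\|\le\|\mu\|$ for all $k$, so the series defining $P_r*\mu(t)$ converges absolutely and uniformly in $t$ (dominated by $\|\mu\|\sum_k r^{|k|}$); as a uniform limit of $E$-valued trigonometric polynomials it lies in $C(\T,E)$. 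The identity carrying (i) is $P_r*\mu(t)=T_\mu\big(P_r(t+\cdot)\big)$, obtained by pulling the bounded operator $T_\mu$ through the series $\sum_k r^{|k|}\fk(t)\fk(\cdot)$, which converges in $C(\T)$. Then $\|P_r*\mu(t)\|_E\le\|T_\mu\|\,\|P_r(t+\cdot)\|_{C(\T)}=\|\mu\|\frac{1+r}{1-r}$ uniformly in $t$, which is exactly (i).

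For the forward implication in (ii), assume $\mu\in M(\T,E)$, so $|\mu|<\infty$. As $P_r(t+\cdot)\in C(\T)$ is nonnegative, integrating against the bounded-variation measure $\mu$ gives $P_r*\mu(t)=\int_\T P_r(t+s)\,d\mu(s)$ and the sharp estimate $\|P_r*\mu(t)\|_E\le\int_\T P_r(t+s)\,d|\mu|(s)$. Integrating in $t$ and applying Tonelli's theorem (all integrands nonnegative) together with $\int_0^{2\pi}P_r(t+s)\dt=1$ for each $s$ yields $\|P_r*\mu\|_{L^1(\T,E)}\le|\mu|(\T)=|\mu|$, so $\sup_{0<r<1}\|P_r*\mu\|_{L^1(\T,E)}\le|\mu|$.

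For the converse, set $M:=\sup_{0<r<1}\|P_r*\mu\|_{L^1(\T,E)}$ and assume $M<\infty$; the goal is to show $\Psi_\mu$ extends to a bounded functional on $C(\T,F)$ with $\|\Psi_\mu\|\le M$, whence Singer's theorem $M(\T,E)=C(\T,F)^*$ gives $\mu\in M(\T,E)$ and $|\mu|\le M$. Given $p=\sum_j y_j\fj\in\P(\T,F)$, so that $\Psi_\mu(p)=\sum_j\hat\mu(j)(y_j)$, I would test $P_r*\mu$ against $q(t):=p(-t)=\sum_j y_j\varphi_{-j}(t)$, which has $\|q\|_{C(\T,F)}=\|p\|_{C(\T,F)}$. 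Expanding both series and integrating term by term, the orthogonality $\int_0^{2\pi}\fk(t)\varphi_{-j}(t)\dt=\delta_{kj}$ leaves $\int_0^{2\pi}\big(P_r*\mu(t)\big)(q(t))\dt=\sum_j r^{|j|}\hat\mu(j)(y_j)$, while the left-hand side is bounded in modulus by $\|q\|_{C(\T,F)}\|P_r*\mu\|_{L^1(\T,E)}\le M\|p\|_{C(\T,F)}$. Letting $r\to1^-$ in the finite sum gives $|\Psi_\mu(p)|\le M\|p\|_{C(\T,F)}$, and density of $\P(\T,F)$ in $C(\T,F)$ completes the extension. The two inequalities combine to $|\mu|=\sup_{0<r<1}\|P_r*\mu\|_{L^1(\T,E)}$.

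The step requiring the most care is the forward implication: one must invoke the sharp pointwise bound $\|\int g\,d\mu\|\le\int g\,d|\mu|$ for $g\ge0$ against a bounded-variation vector measure, rather than the crude semivariation bound of (i), because only the sharp form survives integration in $t$ and cancels against $\int_0^{2\pi}P_r(t+s)\dt=1$; the crude bound would diverge like $\frac{1+r}{1-r}$ as $r\to1$. In the converse the only delicate point is aligning the Fourier supports, which forces the choice $q=p(-\cdot)$; the remainder is the density of polynomials and Singer's duality, both available from the preliminaries.
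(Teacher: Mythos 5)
Your proof is correct, and for part (i) and the forward direction of (ii) it is essentially the paper's argument: absolute convergence of the series in $C(\T,E)$ dominated by $\|\mu\|\frac{1+r}{1-r}$, and then the pointwise bound by $P_r*|\mu|$ integrated via Fubini--Tonelli against $\int_0^{2\pi}P_r(t+s)\dt=1$ (the paper phrases this as $\int_0^{2\pi}\|P_r*\mu(t)\|\dt\le\int_0^{2\pi}P_r*|\mu|(t)\dt$ and quotes the scalar result, which is the same computation). Where you genuinely diverge is the converse of (ii). The paper first establishes membership qualitatively: since $L^1(\T,E)\subseteq M(\T,E)=C(\T,F)^*$, Banach--Alaoglu yields a sequence $r_n\to 1$ and a weak$^*$ limit $\nu\in M(\T,E)$ of $P_{r_n}*\mu$, whose Fourier coefficients agree with those of $\mu$, forcing $\mu=\nu$; only afterwards does it run the testing computation $\Psi_\mu(P_r*p)=\int_0^{2\pi}P_r*\mu(t)(p(t))\dt$ and let $r\to1$ to get $|\mu|\le\sup_r\|P_r*\mu\|_{L^1(\T,E)}$. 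You instead extend $\Psi_\mu$ directly: the same pairing computation bounds $|\Psi_\mu(p)|\le M\|p\|_{C(\T,F)}$ on the dense subspace $\P(\T,F)$, and then a bounded extension plus Singer's theorem produces the measure, giving membership and the norm bound in one stroke, with no compactness argument. Both routes end with the identical identification step (Fourier coefficients determine a regular vector measure, which you state somewhat tersely but at the same level of detail as the paper). Your version is slightly more economical, and you also handle a point the paper glosses over: with the paper's convention $\hat\mu(k)=T_\mu(\fk)$, pairing $P_r*\mu$ against $p(t)$ itself produces $\sum_k r^{|k|}\hat\mu(-k)(y_k)$, so one must test against $p(-\cdot)$ (as you do) for the Fourier supports to match; this reflection has the same sup norm, so nothing is lost.
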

\begin{proof} (i)  Observe that
$$\sum_{\in \Z} |\hat\mu(k)| r^{|k|}\|\fk\|_{C(\T)}\le \|T_\mu\|(1+ 2\sum_{k=1}^\infty r^k)=\|\mu\|\frac{1+r}{1-r}.$$
This shows that the series in (\ref{conv}) is absolutely convergent in $C(\T,E)$ and we obtain (i).

(ii) Assume that $\mu\in M(\T, E)$. In particular $|\mu|\in M(\T)$ and
$$\int_0^{2\pi} \|P_r*\mu(t)\|\dt \le \int_0^{2\pi} P_r*|\mu|(t)\dt.$$
Hence, using  the scalar-valued result, we have
$$\sup_{0<r<1}\|P_r*\mu\|_{L^1(\T,E)}\le \sup_{0<r<1}\|P_r*|\mu|\|_{L^1(\T)}\le \sup_{0<r<1}|\mu| \|P_r\|_{L^1(\T)}=|\mu|.$$
Conversely, assume that $\sup_{0<r<1} \|P_r*\mu\|_{L^1(\T,E)}<\infty$. Since $L^1(\T,E)\subseteq M(\T, E)=C(\T, F)^*$, from the Banach-Alaoglu theorem one can find a sequence $r_n$ converging to $1$ and a measure $\nu\in M(\T, E)$ such that $P_{r_n}*\mu \to \nu$ in the $w^*$-topology. Selecting now functions in $C(\T, F)$ given by $y\fk$ for all $y\in F$ and $k\in \Z$ one shows that $\hat\nu(k)=\hat\mu(k)$. This gives that  $\mu=\nu$ and therefore $\mu\in M(\T,E)$.
Finally, notice that $$|\mu|= \sup \{|\Psi_\mu(p)|: p\in \P(\T,F), \|p\|_{C(\T,F)}=1\}.$$
Given now $p=\sum_{k=-M}^N y_k \fk$, one has  $P_r*p= \sum_{k=-M}^N y_k r^{|k|}\fk$ and
$$\Psi_\mu(P_r*p)= \sum_{k=-M}^N \hat\mu(k)(y_k) r^{|k|}=\int_0^{2\pi} P_r*\mu(t)(p(t))\dt.$$
 Finally since $p=\lim_{r\to 1} P_r*p$ in $C(\T, F)$ then
\ba|\Psi_\mu(p)|&=& \lim_{r\to 1} |\Psi_\mu(P_r*p)|\\
&\le& \sup_{0<r<1}|\int_0^{2\pi} P_r*\mu(t)(p(t))\dt|\\
&\le& \sup_{0<r<1}\| P_r*\mu\|_{L^1(\T,E)}\|p\|_{C(\T,F)}.
\ea
This gives the inequality $|\mu|\le \sup_{0<r<1}\| P_r*\mu\|_{L^1(\T,E)}$ and the proof is complete.
\end{proof}

\section{Some results on  matrices of operators.}

 Throughout the rest of the paper we write
 $\A=(\Tkj)\subset \B(H)$, $\Rk$ and $\Cj$   the $k$-row respectively, that is
$$\Rk=(\Tkj)_{j=1}^\infty, \quad \Cj=(\Tkj)_{k=1}^\infty$$
 and
\be \label{funNM}
\A_{N,M}(s,t)=\sum_{k=1}^M\sum_{j=1}^N \Tkj\overline{\fj(s)}\fk(t), \quad 0\le t,s< 2\pi, \quad N,M\in \N.
\ee

For each $\x=(x_j)\in \l2$ we  consider the functions $h_{\x}$ and $F_{\x}$ given by  \be\label{fun1}h_{\x}(t)=\sj x_j\fj(t), \quad t\in[0,2\pi).\ee

\begin{nota}
Observe that $\A\in \tilde H^2(\T^2, \B(H))$ if and only if
$$\sup_{N,M} \|\A_{N,M}\|_{L^2(\T^2, \B(H))}<\infty.$$

 Note that $\x\in \l2$ if and only if $h_\x\in H^2_0(\T,H)$.  Moreover $$\|\x\|_{\l2}=\|h_{\x}\|_{H^2(\T,H)}.$$
\end{nota}

\begin{prop} Let $\A=(\Tkj)\subset \B(H)$.

(i) If $\A\in \lsott$ then $\Rk, \Cj\in \lsot$  for  all $k,j\in \N$.

(ii) If $\A\in \tilde H^2(\T^2, \B(H))$ then $\Cj, {\bf R_k}\in \tilde H^2(\T, \B(H))$ for all $j,k\in \N$.
\end{prop}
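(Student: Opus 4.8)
This I would dispatch directly from the definitions, since both conclusions amount to discarding nonnegative terms from a doubly-indexed sum. Fix $x\in H$ with $\|x\|=1$ and a single row index $k$; then
$$\sum_{j=1}^\infty\|\Tkj(x)\|^2\le \sum_{k=1}^\infty\sum_{j=1}^\infty\|\Tkj(x)\|^2\le\|\A\|_{\lsott}^2,$$
and taking the supremum over such $x$ gives $\|\Rk\|_{\lsot}\le\|\A\|_{\lsott}$. Fixing a column index $j$ and summing over $k$ instead yields $\|\Cj\|_{\lsot}\le\|\A\|_{\lsott}$ in the same way.

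\textbf{Part (ii).} The natural temptation—to split the double integral over $\T^2$ as a sum over rows via Plancherel—fails here, because $\B(H)$ carries the operator norm rather than a Hilbert norm. The plan instead is to realize a single row (or column) as one Fourier coefficient, in one of the two variables, of the operator polynomial $\Phi_{N,M}(t,s)=\sum_{j=1}^N\sum_{k=1}^M\Tkj\fj(t)\fk(s)$, and then estimate that coefficient in operator norm. For a fixed row index $k_0\le M$, orthogonality of the characters in the $s$-variable gives
$$\sum_{j=1}^N T_{k_0 j}\fj(t)=\int_0^{2\pi}\Phi_{N,M}(t,s)\,\overline{\varphi_{k_0}(s)}\,\ds,$$
and the triangle inequality for the $\B(H)$-valued Bochner integral, together with $|\varphi_{k_0}(s)|=1$, produces the pointwise-in-$t$ bound $\|\sum_{j=1}^N T_{k_0 j}\fj(t)\|_{\B(H)}\le\int_0^{2\pi}\|\Phi_{N,M}(t,s)\|_{\B(H)}\,\ds$.

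To finish I would square and integrate in $t$, applying Jensen's inequality for the probability measure $\ds$ to the right-hand side, obtaining
$$\int_0^{2\pi}\|\sum_{j=1}^N T_{k_0 j}\fj(t)\|_{\B(H)}^2\dt\le\int_0^{2\pi}\int_0^{2\pi}\|\Phi_{N,M}(t,s)\|_{\B(H)}^2\dt\,\ds\le\|\A\|_{\tilde H^2(\T^2,\B(H))}^2.$$
Since this bound is independent of $N$ and of $M$ once $M\ge k_0$, taking the supremum over $N$ shows $\mathbf{R}_{k_0}\in\tilde H^2(\T,\B(H))$ with norm at most $\|\A\|_{\tilde H^2(\T^2,\B(H))}$, and $k_0$ was arbitrary. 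The columns are handled by the mirror argument: for a fixed $j_0\le N$ one extracts $\sum_{k=1}^M T_{k j_0}\fk(s)=\int_0^{2\pi}\Phi_{N,M}(t,s)\,\overline{\varphi_{j_0}(t)}\,\dt$ and repeats the estimate with the roles of $t$ and $s$ interchanged, taking the supremum over $M$ at the end. The one delicate point, and the reason this is not a one-line restriction, is exactly the absence of Plancherel in operator norm; the extraction of a single Fourier coefficient followed by the Bochner-integral triangle inequality and Jensen's inequality is precisely what replaces it and converts the two-variable $L^2$ operator-norm control into the one-variable control defining $\tilde H^2(\T,\B(H))$.
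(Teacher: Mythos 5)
Your proof is correct and follows essentially the same route as the paper's: part (i) by discarding nonnegative terms, and part (ii) by extracting a row (or column) as a Fourier coefficient of $\Phi_{N,M}$ in one variable and then estimating in $L^2$. The paper's proof leaves implicit exactly the Bochner-integral triangle inequality and Jensen step that you spell out, so you have merely filled in the same argument in more detail.
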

\begin{proof}
(i) follows trivially from the definitions.

(ii) Let $k'\in \N$, $M\in \N$ and $t\in [0,2\pi)$. For $N\ge k'$ we have
$$\sum_{j=1}^N T_{k'j}\fj(t)= \int_0^{2\pi}\Big(\sum_{k=1}^N\sum_{j=1}^M T_{kj}\fj(t)\fk(s)\Big)\overline{\varphi_{k'}(s)} \ds.$$
Therefore
$$\int_0^{2\pi}\|\sum_{j=1}^N T_{k'j}\fj(t)\|^2\dt\le \int_0^{2\pi}\int_0^{2\pi}\|\sum_{k=1}^N\sum_{j=1}^M T_{kj}\fj(t)\fk(s)\|^2 \ds\dt.$$
Hence $\|{\bf R_{k'}}\|_{\tilde H^2(\T, \B(H))}\le \|\A\|_{\tilde H^2(\T^2, \B(H))}.$
A similar argument shows that $\|\Cj\|_{\tilde H^2(\T, \B(H))}\le \|\A\|_{\tilde H^2(\T^2, \B(H))}$ and it is left to the reader.
\end{proof}

\begin{defi} Let  $\A=(\Tkj)\subset \B(H)$.
Define
 $B_\A:\P_a(\T,H)\times \P_a(\T,H)\to \C$ be given  by
\be\label{bil}
(h_\x, h_\y)\to \int_0^{2\pi} \int_0^{2\pi}\J \A_{N,M}(s,t)(h_\x(s)\otimes h_\y(t))\ds\dt,
\ee
where $h_\x=\sum_{j=1}^Nx_j\fj$ and $h_\y=\sum_{k=1}^My_k\fk$ for $x_j,y_k\in H$.
\end{defi}

We now give the characterization of bounded operators in $\Bl2$ in terms of bilinear maps.

\begin{prop} \label{l3} If $\A=(\Tkj)\subset \B(H)$ then
\be\label{f2} \ll \A(\x),\y\gg = B_\A(h_\x,h_\y), \quad \x,\y\in c_{00}(H).\ee
In particular, $\A\in \Bl2$ if and only if $B_\A$ extends to a bounded bilinear map on $H^2_0(\T,H)\times H^2_0(\T,H)$. Moreover $\|\A\|=\|B_\A\|$.
\end{prop}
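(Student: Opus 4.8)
The plan is to prove the identity~\eqref{f2} first by a direct computation on elementary vectors, and then deduce the equivalence and norm equality from the density of $\P_a(\T,H)$ in $H^2_0(\T,H)$ together with the isometry $\x\mapsto h_\x$. First I would take $\x=\sum_{j=1}^N x_j\ej$ and $\y=\sum_{k=1}^M y_k\ek$ in $c_{00}(H)$, so that $h_\x=\sum_{j=1}^N x_j\fj$ and $h_\y=\sum_{k=1}^M y_k\fk$ are exactly the polynomials appearing in the definition of $B_\A$. By definition $\A(\x)=\big(\sum_{j=1}^N \Tkj(x_j)\big)_k$, so that $\ll\A(\x),\y\gg=\sum_{k=1}^M\sum_{j=1}^N\la \Tkj(x_j),y_k\ra$. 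Using the fundamental formula~\eqref{compo}--\eqref{mainformula}, namely $\la\Tkj(x_j),y_k\ra=\J\Tkj(x_j\otimes y_k)$, this becomes $\sum_{k,j}\J\Tkj(x_j\otimes y_k)$.

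The key step is then to recognize that this double sum is exactly what the double integral in~\eqref{bil} computes. Plugging the definition~\eqref{funNM} of $\A_{N,M}$ into $B_\A(h_\x,h_\y)$ and expanding $h_\x(s)=\sum_j x_j\fj(s)$, $h_\y(t)=\sum_k y_k\fk(t)$, the integrand is a finite sum of terms $\J\Tkj(x_{j'}\otimes y_{k'})\,\overline{\fj(s)}\fk(s)\,\fk(t)\overline{\fk(t)}$-type products in $s$ and $t$. Here I would use the bilinearity of $(x,y)\mapsto\J\Tkj(x\otimes y)$ to pull the scalar characters out of $\J$, and then apply the orthogonality relations $\int_0^{2\pi}\fj(s)\overline{\varphi_{j'}(s)}\frac{ds}{2\pi}=\delta_{jj'}$ (and similarly in $t$) to collapse the sums. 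The surviving terms are precisely $\sum_{k=1}^M\sum_{j=1}^N\J\Tkj(x_j\otimes y_k)$, matching the expression obtained for $\ll\A(\x),\y\gg$. This establishes~\eqref{f2} on $c_{00}(H)$.

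For the second assertion I would invoke the remark that $\x\in\l2$ if and only if $h_\x\in H^2_0(\T,H)$ with $\|\x\|_{\l2}=\|h_\x\|_{H^2(\T,H)}$, so that $\x\mapsto h_\x$ is an onto isometry from $\l2$ to $H^2_0(\T,H)$ carrying the dense subspace $c_{00}(H)$ onto the dense subspace $\P_a(\T,H)$. Since $\ll\A\x,\y\gg=B_\A(h_\x,h_\y)$ on these dense subspaces, the operator $\A$ is bounded on $\l2$ precisely when the bilinear form $(\x,\y)\mapsto\ll\A\x,\y\gg$ is bounded, which via the isometry is precisely boundedness of $B_\A$ on $H^2_0(\T,H)\times H^2_0(\T,H)$; and the standard identification of an operator norm with the norm of its associated bilinear form (through $\|\A\|=\sup\{|\ll\A\x,\y\gg|:\|\x\|=\|\y\|=1\}$) gives $\|\A\|=\|B_\A\|$.

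The main obstacle I anticipate is purely bookkeeping: making sure the conjugation on the characters is tracked correctly, since $\A_{N,M}$ carries $\overline{\fj(s)}$ against $h_\x(s)=\sum x_j\fj(s)$ in the $s$-variable but $\fk(t)$ against $h_\y(t)=\sum y_k\fk(t)$ in the $t$-variable, so the orthogonality is applied with one conjugate pattern in $s$ and the opposite in $t$. One must verify that these patterns are exactly the ones that produce $\delta$'s rather than vanishing, i.e.\ that the indices in~\eqref{funNM} and~\eqref{bil} are arranged so the nonzero contributions are indeed the diagonal terms $j'=j$, $k'=k$; this is where a sign or conjugation slip would break the argument, but it is a routine check once the characters are written out.
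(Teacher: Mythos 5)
Your proposal is correct and takes essentially the same route as the paper: the paper's proof substitutes the Fourier representations $x_j=\int_0^{2\pi}h_\x(s)\overline{\fj(s)}\ds$ and $y_k=\int_0^{2\pi}h_\y(t)\overline{\fk(t)}\dt$ into $\ll \A(\x),\y\gg$ and reassembles the double integral against $\A_{N,M}$, which is exactly your orthogonality computation run in the opposite direction, and it likewise treats the norm equality as immediate from the isometry $\x\mapsto h_\x$. The conjugation issue you flag does resolve as you expect, since by the paper's formula $\la T(x),y\ra=\J T(x\otimes y)$ the pairing is conjugate-linear in the second slot, so the $t$-integrals give $\int_0^{2\pi}\fk(t)\overline{\varphi_{k'}(t)}\dt=\delta_{kk'}$ (not zero), while the $s$-integrals give $\int_0^{2\pi}\varphi_{j'}(s)\overline{\fj(s)}\ds=\delta_{jj'}$.
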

\begin{proof}
To show (\ref{f2}) we observe that for $h_\x=\sum_{j=1}^Nx_j\fj$ and $h_\y=\sum_{k=1}^My_k\fk$ we have $y_k=\int_0^{2\pi}h_{\y}(t)\overline{\fk(t)}\dt$ and
$x_j=\int_0^{2\pi}h_{\x}(t)\overline{\fj(s)}\ds$.  Hence
\ba
\sum_{k=1}^M \la   \sum_{j=1}^N \Tkj x_j,y_k\ra&=& \int_0^{2\pi} \la \sum_{k=1}^M (\sum_{j=1}^N \Tkj x_j)\fk(t),h_\y(t)\ra\dt\\
&=& \int_0^{2\pi} \la \sum_{k=1}^M (\sum_{j=1}^N \Tkj\fk(t))(x_j),h_\y(t)\ra\dt\\
&=& \int_0^{2\pi} \la \int_0^{2\pi}\A_{N,M}(s,t) (h_\x(s))\ds,h_\y(t)\ra\dt\\
&=&\int_0^{2\pi} \int_0^{2\pi}\J \A_{N,M}(s,t)(h_\x(s)\otimes h_\y(t))\ds\dt.
\ea
The equality of norms follows trivially.
\end{proof}

From Proposition \ref{l3} one can produce some sufficient conditions for $\A$ to belong to $\Bl2$.

\begin{coro}  If $\A\in \tilde H^2(\T^2, \B(H))\cup \ell^2(\N^2, \B(H))$ then $\A\in \Bl2$ and $\|\A\|\le \min\{\|\A\|_{\tilde H^2(\T^2, \B(H))}, \|\A\|_{\ell^2(\N,\B(H))}\}.$
\end{coro}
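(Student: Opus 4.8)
The corollary asserts that membership in either $\tilde H^2(\T^2,\B(H))$ or $\ell^2(\N^2,\B(H))$ forces $\A\in\Bl2$, with a norm bound by the smaller of the two. Since these are two separate sufficient conditions, I would prove each inclusion separately and invoke Proposition \ref{l3}, which reduces everything to estimating the bilinear form $B_\A(h_\x,h_\y)$ on $H^2_0(\T,H)\times H^2_0(\T,H)$, using that $\|\A\|=\|B_\A\|$ and that $\|\x\|_{\l2}=\|h_\x\|_{H^2(\T,H)}$.

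\textbf{The $\tilde H^2(\T^2,\B(H))$ case.} Starting from the integral expression
$$B_\A(h_\x,h_\y)=\int_0^{2\pi}\int_0^{2\pi}\J\A_{N,M}(s,t)\bigl(h_\x(s)\otimes h_\y(t)\bigr)\,\ds\,\dt,$$
I would bound the integrand pointwise. By formula (1), $\J\A_{N,M}(s,t)(h_\x(s)\otimes h_\y(t))=\la \A_{N,M}(s,t)(h_\x(s)),h_\y(t)\ra$, so Cauchy--Schwarz in $H$ gives
$$\bigl|\J\A_{N,M}(s,t)(h_\x(s)\otimes h_\y(t))\bigr|\le \|\A_{N,M}(s,t)\|_{\B(H)}\,\|h_\x(s)\|_H\,\|h_\y(t)\|_H.$$
Applying Cauchy--Schwarz twice in the double integral (in $s$ against $\|h_\x(s)\|$ and in $t$ against $\|h_\y(t)\|$) yields
$$|B_\A(h_\x,h_\y)|\le \|\A_{N,M}\|_{L^2(\T^2,\B(H))}\,\|h_\x\|_{L^2(\T,H)}\,\|h_\y\|_{L^2(\T,H)},$$
and taking the supremum over $N,M$ replaces the middle factor by $\|\A\|_{\tilde H^2(\T^2,\B(H))}$. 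Identifying the $L^2$-norms with $\|\x\|_{\l2}$ and $\|\y\|_{\l2}$ gives the bound on $\|B_\A\|=\|\A\|$.

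\textbf{The $\ell^2(\N^2,\B(H))$ case.} Here I would work directly from the matrix action rather than the integral form. For $\x\in c_{00}(H)$, the $k$-th coordinate of $\A(\x)$ is $\sum_j \Tkj(x_j)$, and by Cauchy--Schwarz in $H$ followed by Cauchy--Schwarz in the summation index $j$,
$$\Bigl\|\sum_j \Tkj(x_j)\Bigr\|^2\le \Bigl(\sum_j\|\Tkj\|\,\|x_j\|\Bigr)^2\le \Bigl(\sum_j\|\Tkj\|^2\Bigr)\Bigl(\sum_j\|x_j\|^2\Bigr).$$
Summing over $k$ gives $\|\A(\x)\|_{\l2}^2\le \bigl(\sum_{k,j}\|\Tkj\|^2\bigr)\|\x\|_{\l2}^2=\|\A\|_{\ell^2(\N^2,\B(H))}^2\,\|\x\|_{\l2}^2$, so $\A\in\Bl2$ with $\|\A\|\le\|\A\|_{\ell^2(\N^2,\B(H))}$. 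Combining the two estimates produces the minimum in the statement.

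\textbf{Main obstacle.} The only genuine subtlety is making the $\tilde H^2$ estimate uniform in $N,M$ and correctly passing to the supremum: one must verify that the bilinear form defined on polynomials genuinely extends to all of $H^2_0(\T,H)\times H^2_0(\T,H)$ with the stated bound, which is exactly what Proposition \ref{l3} licenses. Everything else is a routine double application of Cauchy--Schwarz, so I do not anticipate any deeper difficulty.
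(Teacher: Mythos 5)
Your proposal is correct and follows essentially the same route as the paper: both cases reduce to a Cauchy--Schwarz estimate, with the $\tilde H^2(\T^2,\B(H))$ case handled through the bilinear form of Proposition \ref{l3} and Cauchy--Schwarz in $L^2(\T^2)$. The only cosmetic difference is in the $\ell^2(\N^2,\B(H))$ case, where you bound $\|\A(\x)\|_{\l2}$ directly while the paper bounds the pairing $|\ll \A(\x),\y\gg|$; the two estimates are the same computation.
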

\begin{proof}  Assume first $\A\in  \ell^2(\N^2, \B(H))$. Then
$$|\ll \A(\x), \y\gg | \le \sk\sj \|\Tkj\|\|x_j\|\|y_k\|$$
and therefore, using Cauchy-Schwarz's inequality in $\ell^2(\N^2)$,
\ba|\ll \A(\x), \y\gg &\le& \|\A\|_{\ell^2(\N^2,\B(H))} \|(\|x_j\|\|y_k\|)\|_{\ell^2(\N^2)}\\
&=& \|\A\|_{\ell^2(\N^2,\B(H))} \|\x\| \|\y\| .\ea
Assume now $\A\in \tilde H^2(\T^2, \B(H))$ and apply Cauchy-Schwarz in $L^2(\T^2)$
\ba &&|\int_0^{2\pi} \int_0^{2\pi}\J \A_{N,M}(s,t)(h_\x(s)\otimes h_\y(t))\ds\dt|\\
&\le& \|\A_{N,M}\|_{H^2_0(\T^2,\B(H))}\|h_\x\|_{H^2_0(\T,H)} \|h_\y\|_{H^2_0(\T,H)}.\ea
Now the result  follows from Proposition \ref{l3}
\end{proof}

Actually a sufficient condition better than $\A\in \ell^2(\N^2, \B(H))$ is given in the following result.
\begin{prop} Let $\A=(\Tkj)\subset \B(H)$ such that  $\Cj$ for all $j\in \N$ or $ {\bf R}_k^*\in \lsot$ for all $k\in \N$ and satisfy $$\min\{\|(\Cj)\|_{\ell^2(\N, \lsot)},
  \|({\bf R}_k^*)\|_{\ell^2(\N, \lsot)}\}=M<\infty.$$
  Then $\A\in \Bl2$ and $\|\A\|\le M.$
\end{prop}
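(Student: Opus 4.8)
The plan is to prove the two bounds $\|\A\|\le \|(\Cj)\|_{\ell^2(\N,\lsot)}$ and $\|\A\|\le \|({\bf R}_k^*)\|_{\ell^2(\N,\lsot)}$ separately; taking the better of the two then yields $\|\A\|\le M$. Each bound is a double application of the Cauchy--Schwarz inequality, and the two arguments are formally dual to one another, which is exactly why columns enter directly while rows enter through their adjoints.

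For the column estimate I would fix $\x,\y\in c_{00}(H)$ and use the duality $\|\A(\x)\|_{\l2}=\sup_{\|\y\|=1}|\ll\A(\x),\y\gg|$. Writing $\ll\A(\x),\y\gg=\sj\sk\la \Tkj(x_j),y_k\ra$ and applying the triangle inequality in $j$, for each fixed $j$ I would bound $|\sk\la\Tkj(x_j),y_k\ra|$ by Cauchy--Schwarz in the index $k$, obtaining $(\sk\|\Tkj(x_j)\|^2)^{1/2}\|\y\|_{\l2}$. The inner factor is precisely the quantity controlled by the SOT norm of the column $\Cj=(\Tkj)_k$, so it is $\le\|\Cj\|_{\lsot}\|x_j\|$. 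A final Cauchy--Schwarz in $j$ gives $\sj\|\Cj\|_{\lsot}\|x_j\|\le\|(\Cj)\|_{\ell^2(\N,\lsot)}\|\x\|_{\l2}$, and hence $|\ll\A(\x),\y\gg|\le\|(\Cj)\|_{\ell^2(\N,\lsot)}\|\x\|_{\l2}\|\y\|_{\l2}$, which is the desired bound.

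For the row estimate I would instead fix $k$ and estimate the $k$-th coordinate of $\A(\x)$ directly. Using $\|\sj\Tkj(x_j)\|=\sup_{\|y\|=1}|\sj\la\Tkj(x_j),y\ra|$ together with the adjoint relation $\la\Tkj(x_j),y\ra=\la x_j,T^*_{kj}(y)\ra$, Cauchy--Schwarz in $j$ yields $|\sj\la x_j,T^*_{kj}(y)\ra|\le\|\x\|_{\l2}(\sj\|T^*_{kj}(y)\|^2)^{1/2}\le\|\x\|_{\l2}\|{\bf R}_k^*\|_{\lsot}\|y\|$, where the last step is the definition of the SOT norm applied to the row adjoint ${\bf R}_k^*=(T^*_{kj})_j$. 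Squaring and summing over $k$ then gives $\|\A(\x)\|_{\l2}^2\le\|({\bf R}_k^*)\|_{\ell^2(\N,\lsot)}^2\|\x\|_{\l2}^2$.

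There is essentially no serious obstacle: the content is the bookkeeping of matching the index over which the SOT norm sums (the row index $k$ for a column, the column index $j$ for a row) with the index left free after one Cauchy--Schwarz. The one genuine point is recognizing that the row case must be phrased through adjoints, so that the operators act on the free dual variable $y$ rather than on the fixed data $x_j$; this is what forces the hypothesis ${\bf R}_k^*\in\lsot$ instead of ${\bf R}_k\in\lsot$ and accounts for the asymmetry in the statement. Density of $c_{00}(H)$ in $\l2$ then extends both estimates from $c_{00}(H)$ to all of $\l2$, completing the argument.
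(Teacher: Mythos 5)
Your proposal is correct and follows essentially the same route as the paper: both bounds come from a double Cauchy--Schwarz estimate against the SOT norms, with the row case forced through the adjoints $T^*_{kj}$ exactly as you describe. The only cosmetic difference is that the paper applies Cauchy--Schwarz once over the double index $(k,j)$ inside the bilinear form $\ll\A(\x),\y\gg$ and dismisses the row case as ``similar,'' whereas you iterate Cauchy--Schwarz in $k$ and then in $j$, and treat the rows by estimating the coordinates of $\A(\x)$ directly.
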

\begin{proof} Let $\x,\y\in \l2$, we have
 \ba
 |\ll \A(\x),\y\gg |
 &\le&   \sk \sj \|y_k\| \|\Tkj(\frac{x_j}{\|x_j\|})\|\|x_j\|\\
 &\le&  (\sj \sk \|\Tkj(\frac{x_j}{\|x_j\|})\|^2)^{1/2}(\sk \sj \|y_k\|^2\|x_j\|^2)^{1/2}\\
 &\le& \|\x\|_{\l2}\|\y\|_{\l2}(\sj \|\Cj\|_{\lsot}^2)^{1/2}.
 \ea
 Similar argument works with ${\bf R_k^*}$ which completes the proof.
\end{proof}

Let us now present some necessary conditions for $\A\in \Bl2$.

Since $\ll  \A(x\ej),y\ek\gg  = \la \Tkj(x),y\ra$ we have that if $\A\in \Bl2$ then $\A\in \ell^\infty(\N^2, \B(H))$ and  $\sup_{k,j} \|\Tkj\|\le \|\A\|.$
\begin{lema} \label{l1} Let  $\A=(\Tkj)\in \Bl2$.  Then
$(\Cj)_j,  ({\bf R}_k)_k, (\Cj^*)_j,  ({\bf R}_k^*)_k\in \ell^\infty(\N,\lsot)$.
\end{lema}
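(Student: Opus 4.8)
The plan is to reduce everything to a single computation: the action of $\A$ on the coordinate vectors $x\ej$. Since $(x\ej)_{j'}=x$ when $j'=j$ and $0$ otherwise, the very definition of $\A$ gives $\A(x\ej)=(\Tkj(x))_k$, so that
\be
\sum_{k=1}^\infty\|\Tkj(x)\|^2=\|\A(x\ej)\|_{\l2}^2\le \|\A\|^2\|x\ej\|_{\l2}^2=\|\A\|^2\|x\|^2 .
\ee
Taking the supremum over $\|x\|=1$ yields $\|\Cj\|_{\lsot}\le\|\A\|$, and since this bound is uniform in $j$ we get $(\Cj)_j\in\ell^\infty(\N,\lsot)$ with norm at most $\|\A\|$. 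This is the core estimate and it is entirely routine.

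For the adjoint statement I would exploit that $\A\in\Bl2$ if and only if $\A^*\in\Bl2$, with $\|\A\|=\|\A^*\|$, as recorded just before the lemma. Applying the same coordinate test to $\A^*=(\Skj)$, where $\Skj=T^*_{jk}$, one has $\A^*(x\ek)=(T^*_{kj}(x))_j$, hence $\sum_j\|T^*_{kj}(x)\|^2=\|\A^*(x\ek)\|_{\l2}^2\le\|\A\|^2\|x\|^2$. Thus the sequence $(T^*_{kj})_j$, which is the $k$-th column of $\A^*$ and equals $\Rk^*$, lies in $\lsot$ with norm at most $\|\A\|$ uniformly in $k$, giving $(\Rk^*)_k\in\ell^\infty(\N,\lsot)$.

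The hard part will be the two remaining sequences $(\Rk)_k$ and $(\Cj^*)_j$. In $\lsot$-norm these amount to controlling $\|\sum_j T^*_{kj}\Tkj\|$ and $\|\sum_k \Tkj T^*_{kj}\|$, that is, sums taken over precisely the index that is \emph{summed inside} $\A$ rather than the one isolated by a coordinate vector. The obstruction is that $\bT\mapsto\bT^*$ is not an isometry of $\lsot$, so the device above does not transfer: testing $\A$ and $\A^*$ against coordinate vectors always isolates a column, never a row carrying the non-adjointed entries, and one would need a genuinely different argument. I should flag that a uniform bound may in fact fail here: taking $H=\ell^2$ with orthonormal basis $(e_n)$ and the upper-triangular matrix $\Tkj=\widetilde{e_k\otimes e_j}$ for $j\le k$, $\Tkj=0$ otherwise, a Fubini reindexing shows $\A\in\Bl2$ with $\|\A\|\le1$, while $T^*_{kj}\Tkj=\widetilde{e_k\otimes e_k}$ for each $j\le k$ forces $\|\Rk\|_{\lsot}^2=\|\sum_{j\le k}T^*_{kj}\Tkj\|=k\to\infty$. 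Consequently the cleanly provable content is exactly $(\Cj)_j$ and $(\Rk^*)_k$, each with norm $\le\|\A\|$, and the main effort must be concentrated on the row terms, where the coordinate-test method breaks down.
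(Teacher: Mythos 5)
Your mathematics is correct, and it exposes an error in the paper rather than a gap in your own argument. For the two families you do bound, your proof is exactly the paper's method: the coordinate test $\A(x\ej)=(\Tkj(x))_k$ gives $\|\Cj\|_{\lsot}\le\|\A\|$, and applying the same test to $\A^*$ (equivalently, pairing $\A(\x)$ against $y\ek$ on the output side) gives $\|\Rk^*\|_{\lsot}\le\|\A\|$, both uniformly.

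Your counterexample is valid and shows that the remaining two claims are false, so the lemma as printed is incorrect. Indeed, with $\Tkj=\widetilde{e_k\otimes e_j}$ for $j\le k$ and $\Tkj=0$ otherwise (this is lower-triangular rather than upper-triangular, a harmless slip), one has $\Tkj(z)=\la z,e_k\ra e_j$, hence
\[
\|\A(\x)\|_{\l2}^2=\sum_k\sum_{j\le k}|\la x_j,e_k\ra|^2\le\sum_j\|x_j\|^2,
\]
so $\A\in\Bl2$ with $\|\A\|\le 1$, while $\sum_j\|\Tkj(x)\|^2=k\,|\la x,e_k\ra|^2$ yields $\|\Rk\|_{\lsot}=\sqrt{k}\to\infty$; for this same matrix one even has $\Cj^*\notin\lsot$, since $\sum_k\|\Tkj^*(e_j)\|^2=\sum_{k\ge j}1=\infty$. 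The precise point where the paper's proof breaks is the identity $\ll \A(x\ek),\y\gg=\ll\Rk(x),\y\gg$: in fact $\A(x\ek)=(T_{ik}(x))_i={\bf C}_k(x)$, the $k$-th \emph{column} applied to $x$, and likewise $\ll\A(\x),y\ej\gg=\ll\x,{\bf R}_j^*(y)\gg$ rather than $\ll\x,\Cj(y)\gg$. Passing to $\A^*$ afterwards merely interchanges these two controllable families, exactly as you observe, so the paper's argument establishes precisely the two cases you establish and nothing more. The correct statement is therefore $(\Cj)_j,({\bf R}_k^*)_k\in\ell^\infty(\N,\lsot)$ with norms at most $\|\A\|$; this corrected form is also what the sequel actually needs (the sufficient condition involving $\|(\Cj)\|_{\ell^2(\N,\lsot)}$ and $\|({\bf R}_k^*)\|_{\ell^2(\N,\lsot)}$), except that Corollary \ref{c1}(ii) inherits the same error and should read $\|P_{\{k\}\times\N}\A\|=\|\Rk^*\|_{\lsot}$, as your example refutes it as printed as well.
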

\begin{proof}
 Since for each $\y\in \l2$, $x,y\in H$ and $k,j\in \N$ we have
 $$\ll  \A(x\ek),\y\gg =\ll  {\bf R}_k(x),\y\gg  $$
 and
 $$\ll  \A(\x),y\ej\gg =\ll  \x,\Cj(y)\gg  $$
 we clearly have
 $$\|{\bf R}_k\|_{\lsot}=\sup_{\|x\|=1} \sup_{\|\y\|_{\l2}=1}| \ll  \A(x\ek),\y\gg |\le \|\A\|.$$
 A similar argument allows to obtain $\|\Cj\|_{\lsot}\le \|\A\|.$
 Now since $\|\Tkj\|=\|\Tkj^*\|$ applying the fact that rows in $\A^*$ correspond with the adjoint operators in the columns in $\A$ we obtain the other cases.
\end{proof}

Let us give another necessary condition for boundedness to be used later on.
\begin{prop} \label{mf1} Let $\A=(\Tkj)\in \Bl2$. Then
\be
\sk\sj \|\Tkj x_j\|^2\le \|\A\|^2 \sj \|x_j\|^2.
\ee
\end{prop}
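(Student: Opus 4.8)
The plan is to reduce the double-sum estimate to a family of one-variable estimates, one for each column of $\A$, and then simply superpose them. The essential point is to notice that the quantity $\sk\sj\|\Tkj(x_j)\|^2$ decouples: written as $\sj\big(\sk\|\Tkj(x_j)\|^2\big)$, each inner sum over $k$ involves only the single column $\Cj=(\Tkj)_k$ acting on the single vector $x_j$, so it is enough to bound each such inner sum by $\|\A\|^2\|x_j\|^2$ and add.

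First I would isolate the $j$-th column by testing the boundedness hypothesis on the single-coordinate vector $x_j\ej\in c_{00}(H)$. Since $\A(x_j\ej)=(\Tkj(x_j))_k$, the definition of $\|\A\|_{\Bl2}$ gives directly
$$\sk\|\Tkj(x_j)\|^2=\|\A(x_j\ej)\|_{\l2}^2\le \|\A\|^2\,\|x_j\ej\|_{\l2}^2=\|\A\|^2\,\|x_j\|^2.$$
Alternatively, one may invoke Lemma \ref{l1}, which already records that $\|\Cj\|_{\lsot}\le\|\A\|$ for every $j$; unwinding the definition \eqref{sot1} of the $\lsot$-norm, this says exactly $\sk\|\Tkj(x)\|^2\le\|\A\|^2\|x\|^2$ for all $x\in H$, which applied to $x=x_j$ gives the same bound.

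Then I would sum these inequalities over $j\in\N$. As all terms are nonnegative the order of summation is immaterial, and the left-hand side of the claim is precisely $\sj\big(\sk\|\Tkj(x_j)\|^2\big)$, so
$$\sk\sj\|\Tkj(x_j)\|^2\le\|\A\|^2\sj\|x_j\|^2,$$
which is the assertion. (If $\sj\|x_j\|^2=\infty$ there is nothing to prove.)

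I do not expect any genuine obstacle here; the only subtlety worth flagging is the contrast between this inequality and the defining one. Boundedness controls $\sk\|\sj\Tkj(x_j)\|^2$, where the summation over $j$ sits \emph{inside} the norm, whereas here we must control $\sk\sj\|\Tkj(x_j)\|^2$, the sum of the \emph{individual} norms. These two expressions agree exactly when the test vector has a single nonzero coordinate, and it is precisely this observation — applying boundedness one column at a time and then superposing — that closes the gap.
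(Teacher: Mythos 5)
Your proof is correct, and it is genuinely different from (and more elementary than) the one in the paper. You test boundedness only on the single-coordinate vectors $x_j\ej\in c_{00}(H)$ — which is exactly the content of Lemma \ref{l1}, $\|\Cj\|_{\lsot}\le\|\A\|$ — and then superpose the resulting inequalities over $j$, which is legitimate since all terms are nonnegative. The paper instead uses a rotation/averaging device: it applies $\A$ to the phase-rotated vectors $F_\x(s)=(x_j\fj(s))_j$ and integrates in $s$, so that Plancherel's theorem for $H$-valued trigonometric polynomials,
$$\int_0^{2\pi}\Big\|\sj \Tkj x_j\fj(s)\Big\|^2\,\frac{ds}{2\pi}=\sj\|\Tkj x_j\|^2,$$
converts the pointwise bound $\|\A(F_\x(s))\|\le\|\A\|\,\|F_\x(s)\|$ into the desired decoupled estimate. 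Both arguments yield the same constant $\|\A\|^2$. What yours buys is economy: no integration, no vector-valued Plancherel, and $\A$ is only ever applied to finitely supported vectors, so no care is needed about the extension of $\A$ to all of $\l2$ (the paper's proof, applied to a general $\x\in\l2$, implicitly uses that the extended operator still acts entrywise on $F_\x(s)$). What the paper's route buys is the rotation template $\x\mapsto(x_j\fj(s))_j$, $h_\x$, $F_\x$, which is not needed here but is the workhorse of the later Toeplitz-multiplier arguments, so the authors prove the proposition in the form that foreshadows that machinery.
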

\begin{proof} Let $\x\in \l2$ and assume that $\sj \|x_j\|^2=1$. Denote by $F_\x:[0,2\pi]\to \ell^2(H)$ the continuous function given by
$F_\x(s)= \Big( x_j  \fj(s)\Big)$. Trivially we have $\|\x\|=\|F_\x\|_{C(\T, \l2)}$.  Then
\ba
\sk\sj \|\Tkj x_j\|^2&=& \sk \int_0^{2\pi} \|\sj \Tkj x_j\fj(s)\|^2\ds\\
&=& \int_0^{2\pi} \sk \|\sj \Tkj x_j\fj(s)\|^2\ds\\
&=& \int_0^{2\pi} \|\A(F_\x(s))\|^2\ds\\
&\le& \|\A\|^2\int_0^{2\pi} \|F_\x(s)\|^2\ds= \|\A\|^2.
\ea
This concludes the result.
\end{proof}

From Proposition \ref{mf1} we can  get an extension of Schur theorem to  matrices whose entries are operators in $\B(H)$.
\begin{teor} \label{t1} If $\A=(\Tkj)$ and $\bB=(\Skj)$. If $\A,{\bB} \in \Bl2$ then $\A*\bB \in \Bl2$. Moreover
$$\|\A*\bB \|_{\Bl2}\le \|\A \|_{\Bl2}\|\bB \|_{\Bl2}.$$
\end{teor}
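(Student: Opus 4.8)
The goal is to prove the operator-valued analogue of Schur's theorem: if $\A=(\Tkj)$ and $\bB=(\Skj)$ both belong to $\Bl2$, then their (noncommutative) Schur product $\A*\bB=(\Tkj\Skj)$ is again in $\Bl2$, with the norm bound $\|\A*\bB\|\le \|\A\|\|\bB\|$. The natural strategy is to estimate the bilinear form $\ll (\A*\bB)(\x),\y\gg$ directly and reduce everything to the already-proved Proposition \ref{mf1}, which controls $\sk\sj\|\Tkj x_j\|^2$ by $\|\A\|^2\|\x\|^2$. The key observation is that the $(k,j)$ entry of $\A*\bB$ is the composition $\Tkj\Skj$, so the relevant inner product is $\la \Tkj(\Skj x_j),y_k\ra$, which splits naturally by Cauchy--Schwarz in $H$ into a factor involving $\|\Skj x_j\|$ (controlled by $\bB$ acting on $\x$) and a factor involving $\|\Tkj^* y_k\|$ (controlled by $\A^*$ acting on $\y$).

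First I would write, for $\x,\y\in c_{00}(H)$,
\be
|\ll (\A*\bB)(\x),\y\gg| \le \sk\sj |\la \Tkj(\Skj x_j),y_k\ra| \le \sk\sj \|\Skj x_j\|\,\|\Tkj^* y_k\|,
\ee
where the last step uses $|\la \Tkj(z),y_k\ra|=|\la z,\Tkj^* y_k\ra|\le \|z\|\|\Tkj^* y_k\|$ with $z=\Skj x_j$. Then I would apply Cauchy--Schwarz to the double sum over $(k,j)$:
\be
\sk\sj \|\Skj x_j\|\,\|\Tkj^* y_k\| \le \Big(\sk\sj \|\Skj x_j\|^2\Big)^{1/2}\Big(\sk\sj \|\Tkj^* y_k\|^2\Big)^{1/2}.
\ee
The first factor is exactly the quantity bounded by Proposition \ref{mf1} applied to $\bB$, giving $\le \|\bB\|\,\|\x\|$. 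The second factor requires the same estimate, but for the adjoint: by the symmetry between rows of $\A^*$ and columns of $\A$ (and the fact, noted in the excerpt, that $\A\in\Bl2 \iff \A^*\in\Bl2$ with $\|\A\|=\|\A^*\|$), Proposition \ref{mf1} applied to $\A^*$ yields $\sum_j\sum_k \|\Tkj^* y_k\|^2\le \|\A^*\|^2 \|\y\|^2=\|\A\|^2\|\y\|^2$. Combining the two factors gives $|\ll(\A*\bB)(\x),\y\gg|\le \|\A\|\|\bB\|\|\x\|\|\y\|$, and taking the supremum over unit $\y$ (using that the norm in $\l2$ is dual-attained) produces the claimed bound.

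The one point that deserves care — and the likely main obstacle — is the application of Proposition \ref{mf1} to the adjoint in the right index-summation order. Proposition \ref{mf1} as stated bounds $\sk\sj\|\Tkj x_j\|^2$, i.e.\ it pairs the inner summation index $j$ with the input coordinates. For the factor $\sum\sum\|\Tkj^* y_k\|^2$ the roles of $k$ and $j$ are interchanged, so I must verify that applying the proposition to $\A^*=(\Skj')$ with $S'_{kj}=T^*_{jk}$ indeed produces $\sum_k\sum_j \|T^*_{jk} y_j\|^2$ in the correct variable; after relabeling this is exactly the sum I need, with $\y$ playing the role of the input vector. This is purely bookkeeping about which index is summed against the input and which against the output, but it is where a sign or transpose slip would break the argument, so I would state the adjoint application explicitly rather than wave at ``symmetry.'' Everything else is a routine two-step Cauchy--Schwarz, and no density argument beyond extending from $c_{00}(H)$ to $\l2$ is needed.
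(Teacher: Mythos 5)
Your proposal is correct and follows essentially the same route as the paper's own proof: split $\la \Tkj\Skj x_j,y_k\ra=\la \Skj x_j,\Tkj^* y_k\ra$, apply Cauchy--Schwarz in $H$ and then in the double sum, and bound the two factors by Proposition \ref{mf1} applied to $\bB$ and to $\A^*$ together with $\|\A\|=\|\A^*\|$. Your explicit check of the index bookkeeping in the adjoint application is a point the paper passes over in one line, but it is the same argument.
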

\begin{proof}  It suffices to show that if $\x, \y\in c_{00}(H)$ then
\be \label{schur}|\ll  \A*\bB(\x),\y\gg |\le \|\A\|\|\bB\| \|\x\|\|\y\|.\ee
Notice that
\ba
|\ll  \A*\bB(\x),\y\gg | &=& |\sk \la  \sj \Tkj \Skj( x_j),y_k\ra|\\
&=& |\sk \sj \la \Skj( x_j),\Tkj^*(y_k)\ra|\\
&\le & \sk \sj \| \Tkj^*(y_k)\| \| \Skj( x_j)\|\\
&\le & (\sk \sj \| \Tkj^*(y_k)\|^2)^{1/2} (\sk \sj\| \Skj( x_j)\|^2)^{1/2}.
\ea

Using the estimate above, combined with  Proposition \ref{mf1} applied to ${\bf B}$ and $\A^*$, due to the fact $\|\A\|= \|\A^*\|$, one obtains (\ref{schur}). The proof is then complete.
\end{proof}

\bigskip

Given $S\subset \N\times \N$ and $\A=(\Tkj)$ we write
$P_S\A= (\Skj\chi_S)$ that is the matrix with entries $\Tkj$ if $(k,j)\in S$ and $0$ otherwise.  In particular matrices with a single row, column or diagonal correspond to $S=\{k\}\times \N$, $S=\N\times \{j\}$  and $D_l=\{(k,k+l):k\in \N\}$ for $l\in \Z$ respectively. Also the  case of finite  or upper (or lower) triangular matrices coincide with $P_S\A$ for $S=[1,N]\times[1,M]=\{(k,j): 1\le k\le N, 1\le j\le M\}$ or
$S=\Delta=\{(k,j): j\ge k\}$ (or $S=\{(k,j): j\le k\}$ ) respectively.

It is well known that the mapping $\A\to P_S\A$ is not continuous in $\B(H)$ for all sets $S$ (for instance, the reader is referred to \cite[Chap.2, Thm.2.19]{PP} to see that $S=\Delta$ the triangle projection  is unbounded) but there are cases where this holds true. Clearly we have that $\A\in \Bl2$ if and only if $\|\A\|=\sup_{N,M}\|P_{[1,N]\times [1,M]}\A\|<\infty$. This easily follows noticing that
$$\ll  P_{[1,N]\times [1,M]}\A(\x),\y\gg = \ll  \A(P_N\x),P_M\y\gg $$
where $P_N\x$ stands for the projection on the $N$-first coordinates of $\x$,

In general it is rather difficult to compute the norm of the matrix $\A$. Let us point out some trivial cases.
\begin{coro} \label{c1} Let $\A=(\Tkj)\subset \B(H)$. Then

(i) $\|P_{\N\times\{j\}}\A\|= \|\Cj\|_{\lsot}$ for each $j\in \N$.

(ii) $\|P_{\{k\}\times \N}\A\|= \|{\bf R}_k\|_{\lsot}$ for each $k\in \N$.

(iii) $\|P_{D_l}\A\|= \sup_k \|T_{k,k+l}\|$ for each $l\in \Z$ (where $T_{k,k+l}=0$ whenever $k+l\le 0$).
\end{coro}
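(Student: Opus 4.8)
The plan is to reduce each of the three norm computations to a direct evaluation of $\|P_S\A\|$ via its action on $c_{00}(H)$, exploiting that the projected matrix has a degenerate support pattern. In each case I will compute $\sup\{|\ll P_S\A(\x),\y\gg| : \|\x\|_{\l2}=\|\y\|_{\l2}=1\}$ directly, showing it equals the claimed quantity by a Cauchy--Schwarz upper bound matched by an explicit near-extremal choice of $\x$ and $\y$.

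For part (i), the matrix $P_{\N\times\{j\}}\A$ has all columns zero except the $j$-th, which is $\Cj=(\Tkj)_{k=1}^\infty$. The action on $\x=(x_i)\in\l2$ depends only on $x_j$, namely $P_{\N\times\{j\}}\A(\x)=(\Tkj(x_j))_k$, so that $\|P_{\N\times\{j\}}\A(\x)\|_{\l2}^2=\sk\|\Tkj(x_j)\|^2$. Taking the supremum over $\|\x\|\le 1$ reduces to the supremum over $\|x_j\|\le 1$, which is exactly $\sup_{\|x\|=1}(\sk\|\Tkj(x)\|^2)^{1/2}=\|\Cj\|_{\lsot}$ by the defining formula (\ref{sot1}) for the single-column sequence. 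Part (ii) is entirely symmetric: $P_{\{k\}\times\N}\A$ has only its $k$-th row nonzero, its action sends $\x$ to the single nonzero coordinate $\sj\Tkj(x_j)=\Rk(\x)$ sitting in position $k$, and the relation $\ll P_{\{k\}\times\N}\A(\x),\y\gg=\ll\Rk(\x),\y\gg$ already recorded in the proof of Lemma \ref{l1} shows the norm equals $\|\Rk\|_{\lsot}$.

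For part (iii), the diagonal projection $P_{D_l}\A$ sends $\x=(x_j)$ to the sequence whose $k$-th entry is $T_{k,k+l}(x_{k+l})$ (with the convention $T_{k,k+l}=0$ when $k+l\le 0$). Hence $\|P_{D_l}\A(\x)\|_{\l2}^2=\sk\|T_{k,k+l}(x_{k+l})\|^2\le\sup_k\|T_{k,k+l}\|^2\sk\|x_{k+l}\|^2\le\sup_k\|T_{k,k+l}\|^2\|\x\|^2$, giving the upper bound. For the reverse inequality, pick an index $k_0$ nearly attaining $\sup_k\|T_{k,k+l}\|$ and a unit vector $x\in H$ with $\|T_{k_0,k_0+l}(x)\|$ close to $\|T_{k_0,k_0+l}\|$, then set $\x=x\mathbf{e}_{k_0+l}$; this single-coordinate input yields $\|P_{D_l}\A(\x)\|_{\l2}=\|T_{k_0,k_0+l}(x)\|$, which can be made arbitrarily close to the supremum.

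None of these is genuinely hard; the only point requiring a little care is bookkeeping the index shift in (iii) and the edge convention $T_{k,k+l}=0$ for $k+l\le0$, so that the one-point test vector $\x=x\mathbf{e}_{k_0+l}$ is actually supported on a valid positive coordinate $k_0+l\ge 1$. The main conceptual step throughout is recognizing that a matrix supported on a single column, single row, or single diagonal acts on $\l2$ as a diagonal-type operator whose norm is computed coordinatewise, so that the operator norm collapses to a supremum of $\B(H)$-norms or an $\lsot$-norm exactly as claimed.
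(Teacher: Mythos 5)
Your treatment of (i) and (iii) is correct and is essentially the paper's own argument: the paper gets (i) from Lemma \ref{l1} plus the same coordinatewise observation, and proves (iii) exactly as you do (coordinatewise upper bound, single-coordinate test vectors for the lower bound, with the same care about the convention $T_{k,k+l}=0$ for $k+l\le 0$).

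The gap is in (ii), and it is genuine: the row case is \emph{not} ``entirely symmetric'' to the column case. A single-column matrix acts coordinatewise --- its output $(\Tkj(x_j))_k$ involves only the one coordinate $x_j$ of $\x$ --- and that is precisely why its norm collapses to an $\lsot$-norm. A single-row matrix does the opposite: it \emph{sums} over all coordinates, producing $\sj \Tkj(x_j)$, and no coordinatewise collapse occurs. To compute its norm one must pass to the adjoint: the operator $R:\l2\to H$, $R(\x)=\sj\Tkj(x_j)$, has adjoint $R^*:H\to \l2$, $R^*(y)=(\Tkj^*(y))_j$, whence
$$\|P_{\{k\}\times \N}\A\|=\|R\|=\|R^*\|=\sup_{\|y\|=1}\Big(\sj\|\Tkj^*(y)\|^2\Big)^{1/2}=\|{\bf R}_k^*\|_{\lsot},$$
the $\lsot$-norm of the \emph{adjoint} row, which in general is different from $\|\Rk\|_{\lsot}=\sup_{\|x\|=1}(\sj\|\Tkj(x)\|^2)^{1/2}$. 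Concretely, take $\Tkj=\widetilde{e_j\otimes x_0}$ for a fixed unit vector $x_0$, i.e.\ $\Tkj(z)=\la z,e_j\ra x_0$ (the same sequence the paper uses in Section 2): then $\|\Rk\|_{\lsot}=1$, yet for $\x=(e_j/\sqrt N)_{j=1}^N$ one gets $\sj\Tkj(x_j)=\sqrt{N}\,x_0$, so the single-row matrix is unbounded. Thus the identity you assert in (ii) fails as you read it, and the step ``the norm equals $\|\Rk\|_{\lsot}$ by symmetry'' is exactly where the proof breaks.

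Two further remarks. First, the relation you cite from the proof of Lemma \ref{l1} does not say what you need: what is recorded there is $\ll \A(x\ek),\y\gg = \ll {\bf R}_k(x),\y\gg$ for a \emph{single-coordinate} input $x\ek$ (so it concerns the operator $x\mapsto (T_{ik}(x))_i$, not the action of a whole row on a general $\x\in\l2$); it cannot be upgraded to $\ll P_{\{k\}\times\N}\A(\x),\y\gg=\ll\Rk(\x),\y\gg$ in your sense. Second, this is also why Lemma \ref{l1} lists all four families $(\Cj)$, $(\Rk)$, $(\Cj^*)$, $({\bf R}_k^*)$: rows and columns are interchangeable only after taking adjoints. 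The clean route for (ii) is to apply your (correct) part (i) to the adjoint matrix, using $\|P_{\{k\}\times\N}\A\|=\|(P_{\{k\}\times\N}\A)^*\|=\|P_{\N\times\{k\}}\A^*\|$, which yields the $\lsot$-norm of $(\Tkj^*)_j$; stated with $\|\Rk\|_{\lsot}$ as you (and the Corollary's wording, read against the definition of $\Rk$) have it, the formula is simply not true.
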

\begin{proof} (i) and (ii) follow trivially from Lemma \ref{l1}.

To see (iii) note that
$(P_{D_l}\A(\x))_k= (T_{k,k+l}x_{k+l})_k$. Hence $\|P_{D_l}\A(\x)\|\le (\sup_k \|T_{k,k+l}\|)\|\x\|$. Since the other inequality always holds  the proof is complete.
\end{proof}

\section{Toeplitz multipliers on operator-valued matrices}

In this section we shall achieve the operator-valued analogues to the Toeplitz and Bennet theorems presented in the introduction.

\begin{teor}  Let $\A=(\Tkj)\in \mathcal T$. Then $A\in \Bl2$ if and only if there exists $\mu\in V^\infty(\T, \B(H))$ such that $\Tkj=\hat \mu(j-k)$ for all $k,j\in \N$. Moreover $\|\A\|=\|\mu\|_\infty.$
\end{teor}
\begin{proof} Assume that $\mu\in V^\infty(\T, \B(H))$ and $\Tkj=\hat \mu(j-k)$ for all $k,j\in \N$.
Then for $\x,\y\in c_{00}(H)$ we have
\ba
 \ll \A(\x),\y\gg &=&\sum_{k=1}^M\sum_{j=1}^N \la \Tkj(x_j),y_k\ra\\
&=&\sum_{k=1}^M\sum_{j=1}^N \la  T_{\mu}({\overline\fk}\fj)(x_j),y_k\ra\\
&=& \sum_{k=1}^M\sum_{j=1}^N \Psi_{\mu}(\fj x_j\otimes {\overline\fk}  y_k)\\
&=& \Psi_{\mu}(\sum_{k=1}^M\sum_{j=1}^N \fj x_j\otimes {\overline\fk}  y_k)\\
&=& \Psi_{\mu}((\sum_{j=1}^N\fj x_j)\otimes (\sum_{k=1}^M {\overline \fk}y_k))
\ea
Therefore
\ba
|\ll  \A(\x),\y\gg |
&\le &\|\Psi_{\mu}\|_{L^1(\T, H\hat\otimes H)^*}
\int_0^{2\pi}\|h_\x(t)\otimes h_\y(-t))\|_{H\hat\otimes H} \dt\\
&=& \|\mu\|_\infty \int_0^{2\pi}\|h_\x(t) \| \|h_\y(-t)\| \dt\\
&\le& \|\mu\|_{\infty} (\int_0^{2\pi}\|h_\x(t) \|^2 \dt)^{1/2}(\int_0^{2\pi} \|h_\y(t))\|^2 \dt)^{1/2}\\
&\le& \|\mu\|_{\infty}\|\x\|_{\l2}\|\y\|_{\l2}.
\ea
Hence $\A\in \Bl2$ and $\|\A\|\le \|\mu\|_\infty.$

Conversely, let us assume that $\A\in \Bl2$ and $\Tkj= T_{j-k}$ for a given sequence ${\bf T}=(T_n)_{n\in \Z}$ of operators in $\B(H)$.
We define
\be
T(\sum_{n=-M}^N\alpha_n\varphi_n)= \alpha_0 T_{1,1}+\sum_{n=1}^{M}\alpha_{-n} T_{n+1,1}+ \sum_{n=1}^N\alpha_{n} T_{1,n+1}.
\ee
Let us see that  $T\in \L( L^1(\T), \B(H))$. Since $L^1(\T)=\overline{span\{\fk:k\in \Z\}}^{\|\cdot\|_1}$ it suffices to show that
\be \label{eq1} \|T(\sum_{n=-M}^N\alpha_n\varphi_n)\|\le \|\A\| \int_0^{2\pi}|\sum_{n=-M}^N\alpha_n\varphi_n(t)|\dt.\ee
Let $x,y\in H$ and notice that
$$\la T(\sum_{n=-M}^N\alpha_n\varphi_n)(x),y\ra= \sum_{n=-M}^N\alpha_n\beta_{n}(x,y)$$
where $\beta_n(x,y)= \la  T_{n}(x),y\ra$.
Now taking into account that $A_{x,y}=(\la \Tkj(x),y\ra)$ is a Toeplitz matrix and defines a bounded operator $A_{x,y}\in \B(\ell^2)$ with $\|A_{x,y}\|\le \|\A\|\|x\|\|y\|$ we obtain, due to Theorem \ref{tt}, that $$\psi_{x,y}=\sum_{n\in \Z} \beta_n(x,y)\varphi_n\in L^\infty(\T)$$ with  $\|\psi_{x,y}\|_{L^\infty(\T)}\le \|\A\| \|x\|\|y\|.$ Finally we have
\ba |\la T(\sum_{n=-M}^N\alpha_n\varphi_n)(x),y\ra|&=&|
\int_0^{2\pi}(\sum_{n=-M}^N\alpha_n\varphi_n(t))\psi_{x,y}(-t)\dt|\\
&\le& \|\sum_{n=-M}^N\alpha_n\varphi_n(t)\|_{L^1(\T)}\|\A\|\|x\|\|y\|.
\ea
This shows (\ref{eq1}) which gives $\|T\|_{L^1(\T)\to \B(H)}\le \|\A\|.$
Finally, from the embedding $C(\T)\to L^1(\T)$ we have that there exists $\mu\in V^\infty(\T,\B(H))$ such that $T_\mu=T$ and $\|\mu\|_\infty\le \|A\|.$ The proof is then complete.
\end{proof}

To prove the analogue of Bennet't theorem on Schur multipliers we shall need the following lemmas.

\begin{lema} \label{ultimo} Let $\A=(\Tkj)\in \M_l(\l2)\cup \M_r(\l2)$ and $x_0,y_0\in H$ with $\|x_0\|=\|y_0\|=1$.  Denote by $A_{x_0,y_0}=(\gamma_{kj})$ the matrix with entries $$\gamma_{kj}=\la  \Tkj(x_0),y_0\ra, \quad k,k\in \N.$$ Then $A_{x_0,y_0}\in \M(\ell^2)$ and $\|A_{x_0,y_0}\|_{\M(\ell^2)}\le \min\{\|\A\|_{\M_l(\l2)}, \|\A\|_{\M_r(\l2)}\}.$
\end{lema}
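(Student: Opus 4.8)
The plan is to verify directly that $A_{x_0,y_0}$ meets the definition of a scalar Schur multiplier: I take an arbitrary $B=(b_{kj})\in\B(\ell^2)$ and aim to show
$\|A_{x_0,y_0}*B\|_{\B(\ell^2)}\le\min\{\|\A\|_{\mathcal M_l(\l2)},\|\A\|_{\Ml2}\}\,\|B\|_{\B(\ell^2)}$. The idea is to \emph{lift} the scalar matrix $B$ to an operator-valued matrix built from the single rank-one operator $\widetilde{y_0\otimes x_0}$, feed this lift into the (left or right) multiplier hypothesis on $\A$, and then recover the scalar bilinear form of $A_{x_0,y_0}*B$ by testing the resulting bounded operator against elements of $\l2$ that are scalar multiples of $x_0$ or $y_0$.

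Concretely, set $\bB=(b_{kj}\widetilde{y_0\otimes x_0})$. Using $\widetilde{(y_0\otimes x_0)}(z)=\la z,y_0\ra x_0$ I first check $\bB\in\Bl2$ with $\|\bB\|_{\Bl2}\le\|B\|_{\B(\ell^2)}$: indeed $\bB(\x)_k=\big(\sj b_{kj}\la x_j,y_0\ra\big)x_0$, so since $\|x_0\|=1$ one gets $\|\bB(\x)\|_{\l2}=\|Bd\|_{\ell^2}$ with $d=(\la x_j,y_0\ra)_j$, and $|d_j|\le\|x_j\|$ gives $\|d\|_{\ell^2}\le\|\x\|_{\l2}$. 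Next I expand the two Schur products via (\ref{compo}): in the left case $\Tkj\,\widetilde{y_0\otimes x_0}=\widetilde{y_0\otimes(\Tkj x_0)}$, and in the right case $\widetilde{y_0\otimes x_0}\,\Tkj=\widetilde{(\Tkj^*y_0)\otimes x_0}$.

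Now fix finitely supported scalar sequences $a=(a_j)$ and $c=(c_k)$. If $\A\in\mathcal M_l(\l2)$, I apply $\A*\bB$ to $\x=(a_j y_0)$ and pair with $\y=(c_k y_0)$; the left-hand identity collapses the $(k,j)$ term to $b_{kj}a_j\overline{c_k}\la\Tkj x_0,y_0\ra=\gamma_{kj}b_{kj}a_j\overline{c_k}$, so that $\ll(\A*\bB)(\x),\y\gg=\sk\sj\gamma_{kj}b_{kj}a_j\overline{c_k}$, which is precisely the bilinear form of the scalar matrix $A_{x_0,y_0}*B$ evaluated at $(a,c)$. If instead $\A\in\Ml2$, I apply $\bB*\A$ to $\x=(a_j x_0)$ and pair with $\y=(c_k x_0)$; the right-hand identity together with $\la\Tkj x_0,y_0\ra=\gamma_{kj}$ produces the same scalar sum. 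In either case $\|\x\|_{\l2}=\|a\|_{\ell^2}$ and $\|\y\|_{\l2}=\|c\|_{\ell^2}$ because $\|x_0\|=\|y_0\|=1$, so the multiplier hypothesis combined with $\|\bB\|_{\Bl2}\le\|B\|_{\B(\ell^2)}$ bounds the scalar form by $\|\A\|_{\mathcal M_l(\l2)}\|B\|\,\|a\|\,\|c\|$ (respectively $\|\A\|_{\Ml2}\|B\|\,\|a\|\,\|c\|$). Taking the supremum over $\|a\|_{\ell^2},\|c\|_{\ell^2}\le1$ yields $A_{x_0,y_0}*B\in\B(\ell^2)$ with the stated bound, and letting $B$ range over $\B(\ell^2)$ gives $A_{x_0,y_0}\in\M(\ell^2)$ with norm at most the minimum of the two constants.

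I expect the only genuinely delicate point to be the norm estimate $\|\bB\|_{\Bl2}\le\|B\|_{\B(\ell^2)}$ for the lift; everything else is bookkeeping with the composition identities (\ref{compo}) and the choice of rank-one test vectors. A pleasant feature is that the construction is uniform: the same $\bB$ serves both hypotheses, and only the side of the Schur product and the test directions ($y_0$ versus $x_0$) change.
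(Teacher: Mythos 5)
Your proof is correct and follows essentially the same route as the paper: lift the scalar matrix $B$ to an operator-valued matrix whose entries are $b_{kj}$ times a fixed rank-one operator, feed it into the left (resp.\ right) multiplier hypothesis, and recover the bilinear form of $A_{x_0,y_0}*B$ by pairing against sequences supported in the directions $x_0$, $y_0$. The only (cosmetic) difference is that you use the single lift $\widetilde{y_0\otimes x_0}$ for both cases, whereas the paper takes entries $\beta_{kj}\widetilde{z_0\otimes z_0}$ with $z_0=x_0$ in the left case and $z_0=y_0$ in the right case, then compresses $\A*\bB$ (resp.\ $\bB*\A$) back to the scalar matrix via $\la\,\cdot\,(x_0),y_0\ra$ -- which is exactly your pairing step in disguise.
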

\begin{proof} Let $z_0\in H$ and $\|z_0\|=1$ and  consider the bounded operators $\pi_{z_0}:\l2\to \ell^2$ and $i_{z_0}: \ell^2\to \l2$ given by
 $$ \pi_{z_0}((x_j))= (\la x_j,z_0\ra)_j, \quad i_{z_0}((\alpha_k))= (\alpha_k z_0)_k.$$

 Now given $B=(\beta_{kj})\in \B(\ell^2)$ with $\|B\|=1$ we define $\bB= i_{z_0} B \,\pi_{z_0}$.

 Hence $\bB\in \Bl2$.  Moreover $\|\bB\|= \|B\|$ because $\|i_{z_0}\|=\|\pi_{z_0}\|=1$ and $B((\alpha_j)) z_0= \bB( (\alpha_j z_0))$ for any $(\alpha_j)\in \ell^2$.

 Let us write $\bB=(\Skj)$ and observe that $\Skj=\beta_{kj}\widetilde{z_0\otimes z_0}$. Indeed, $$\la\Skj(x), y\ra= \ll  \bB(x\ej),y\ek\gg = \ll  (\la x,z_0\ra\beta_{kj}z_0)_k, y\ek\gg =\beta_{kj}\la x,z_0\ra\la z_0,y\ra. $$

 Recall that $T(\widetilde{x\otimes y})= \widetilde{x\otimes T(y)}$ and
  $(\widetilde{x\otimes y})T= \widetilde{T^*x\otimes y}$ for any $T\in \B(H)$ and $x,y\in H$. In particular we obtain $$\la (\Tkj \Skj)(x_0),y_0\ra= \beta_{kj}\la \Tkj (z_0),y_0\ra \la x_0,z_0\ra$$
  and
  $$\la (\Skj \Tkj)(x_0),y_0\ra= \beta_{kj}\la \Tkj (x_0),z_0\ra \la z_0,y_0\ra $$
Therefore, choosing $z_0=x_0$ and ${\bf C}=\A* \bB$ one has $C_{x_0,y_0}= A_{x_0,y_0}*B$ and using that $\| C_{x_0,y_0}\|\le \|{\bf C}\|$ we obtain $$\|A_{x_0,y_0}*B\|_{\B(\ell^2)}\le \|\A*\bB\|_{\Bl2}\le \|\A\|_{\M_l(\l2)}.$$
Similarly choosing $z_0=y_0$ and ${\bf C}= \bB*\A$ one obtains
$$\|B*A_{x_0,y_0}\|_{\B(\ell^2)}\le\|\A\|_{\M_r(\l2)}.$$
This completes the proof.
\end{proof}

\begin{lema} \label{mainlema} Let $\mu\in {\mathfrak M}(\T, \B(H))$,  $\A=(\Tkj)\in \mathcal T$ with $\Tkj=\hat\mu(j-k)$ for $k,j\in \N$, $\bB=(\Skj)\subset \B(H)$ and $\x,\y\in c_{00}(H)$. Then
\be \label{mult}
\ll  \A*\bB(\x),\y\gg =\Psi_{\mu}(\int_0^{2\pi}\int_0^{2\pi}{\bB}_{N,M}(\cdot-s, \cdot-t)(h_{\x}(s))\otimes h_{\y}(-t) \ds\dt\Big).
\ee
\end{lema}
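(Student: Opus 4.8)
The plan is to prove the identity entrywise and then reassemble it through the linearity of $\Psi_\mu$. Writing $h_\x=\sum_{j=1}^N x_j\fj$ and $h_\y=\sum_{k=1}^M y_k\fk$, I expand the left-hand side as
$$\ll \A*\bB(\x),\y\gg =\sum_{k=1}^M\sum_{j=1}^N \la \Tkj\Skj(x_j),y_k\ra,$$
a finite sum because $\x,\y\in c_{00}(H)$. Everything then reduces to analysing a single term $\la \Tkj\Skj(x_j),y_k\ra$ and exploiting the Toeplitz hypothesis $\Tkj=\hat\mu(j-k)$.

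First I would peel off the operator $\Skj$ using the composition rule (\ref{mainformula}). Since $\la \Tkj\Skj(x_j),y_k\ra=\J(\Tkj\Skj)(x_j\otimes y_k)=\J\Tkj(\Skj x_j\otimes y_k)$ and $\Tkj=\hat\mu(j-k)=T_\mu(\varphi_{j-k})$, the connection formula (\ref{conn}) turns this into a value of $\Psi_\mu$:
$$\la \Tkj\Skj(x_j),y_k\ra=\J T_\mu(\varphi_{j-k})(\Skj x_j\otimes y_k)=\Psi_\mu\big((\Skj x_j\otimes y_k)\,\varphi_{j-k}\big).$$
Summing over $k,j$ and using linearity of $\Psi_\mu$ then yields
$$\ll \A*\bB(\x),\y\gg=\Psi_\mu\Big(\sum_{k=1}^M\sum_{j=1}^N (\Skj x_j\otimes y_k)\,\varphi_{j-k}\Big),$$
so it remains to identify the $H\hat\otimes H$-valued trigonometric polynomial $\sum_{k,j}(\Skj x_j\otimes y_k)\varphi_{j-k}$ with the double integral in the statement.

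For this last step I would, writing $u$ for the free variable $\cdot$, expand $\bB_{N,M}(u-s,u-t)=\sum_{k,j}\Skj\,\overline{\varphi_j(u-s)}\,\varphi_k(u-t)$, apply it to $h_\x(s)=\sum_{j'}x_{j'}\varphi_{j'}(s)$, tensor with $h_\y(-t)=\sum_{k'}y_{k'}\varphi_{k'}(-t)$, and carry out the $s$- and $t$-integrations. Each integration is an orthogonality relation of the type $\int_0^{2\pi}\overline{\varphi_j(s)}\,\varphi_{j'}(s)\,\ds=\delta_{jj'}$; the translations by the outer variable factor the exponentials $\overline{\varphi_j(u-s)}$ and $\varphi_k(u-t)$ into a part depending on $(s,t)$, which the integrals test against the Fourier modes of $h_\x$ and $h_\y$, and a part depending only on $u$. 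Thus the $s$-integral forces the column index $j$ to match the mode $j'$ of $h_\x$, the $t$-integral forces the row index $k$ to match the mode $k'$ of $h_\y$, and the quadruple sum collapses to the diagonal $j'=j$, $k'=k$, leaving a residual phase in $u$ that should come out to $\varphi_{j-k}(u)$; this recovers the polynomial above.

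The algebra of the first two steps is routine; the delicate point — and the only place where the Toeplitz structure is genuinely used — is the phase bookkeeping in the last step, where the exponent $j-k$ must split as the difference of an $s$-contribution and a $t$-contribution. The main obstacle is therefore verifying that the conjugation in $\overline{\varphi_j}$, the precise placement of the translations $\cdot-s$, $\cdot-t$, and the reflection $h_\y(-t)$ are calibrated so that the surviving phase is exactly $\varphi_{j-k}$ rather than $\varphi_{j+k}$ or $\varphi_{k-j}$. I would carry out this accounting explicitly, since it is the one spot where a sign or conjugation slip in the orthogonality relations would silently break the collapse.
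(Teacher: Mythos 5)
Your steps 1--3 are correct, and they are in fact a cleaner route to the heart of the lemma than the paper's own proof: peeling off $\Skj$ with (\ref{mainformula}) and applying (\ref{conn}) termwise gives directly
$$\ll \A*\bB(\x),\y\gg=\Psi_\mu\Big(\sum_{k=1}^M\sum_{j=1}^N (\Skj x_j\otimes y_k)\,\varphi_{j-k}\Big),$$
whereas the paper reaches this same polynomial more laboriously, by substituting $x_j=\int_0^{2\pi}h_\x(s)\overline{\fj(s)}\ds$ and $y_k=\int_0^{2\pi}h_\y(t)\overline{\fk(t)}\dt$ and regrouping along the diagonals $l=j-k$.

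The genuine gap is exactly at the step you deferred, and it is fatal for the statement as printed. Carry out your own bookkeeping: in the right-hand side of (\ref{mult}) one has $\overline{\fj(u-s)}=\overline{\fj(u)}\,\fj(s)$, so the $s$-dependence of the $(k,j,j')$ term is $\fj(s)\varphi_{j'}(s)=\varphi_{j+j'}(s)$, and $\int_0^{2\pi}\varphi_{j+j'}(s)\,\ds=0$ because $j,j'\ge 1$. The $s$-integral therefore does not force $j=j'$; it annihilates every term, so the double integral in (\ref{mult}) is identically zero (this happens in the operator-application slot, hence independently of any linear versus conjugate-linear convention in the slot carrying $h_\y$), while the left-hand side is in general nonzero. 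The orthogonality relation $\int_0^{2\pi}\overline{\fj(s)}\varphi_{j'}(s)\,\ds=\delta_{jj'}$ that your collapse needs only appears if the arguments of $\bB_{N,M}$ are transposed: $\overline{\fj(s-u)}=\overline{\fj(s)}\,\fj(u)$. The identity that is actually provable --- and the one the paper's own proof ends with, and then uses in the next theorem as the function $G(u)$ --- is
$$\ll \A*\bB(\x),\y\gg=\Psi_{\mu}\Big(\int_0^{2\pi}\int_0^{2\pi}{\bB}_{N,M}(s-\cdot\,, t-\cdot\,)(h_{\x}(s))\otimes h_{\y}(t)\,\ds\,\dt\Big),$$
with the tensor against $h_\y(t)$ read through the inner product (conjugate-linearly), so that the $t$-integral yields $\delta_{kk'}$ and the surviving phase is $\fj(u)\overline{\fk(u)}=\varphi_{j-k}(u)$. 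Your master identity from steps 1--3 is precisely this corrected formula in disguise, so an honest execution of your final computation would have completed a proof of the corrected statement (and exposed the misprint in (\ref{mult})); but as submitted, the assertion that the $s$-integral matches $j$ with $j'$ is false for the formula you set out to prove, so the proposal establishes neither the printed equation (which is false) nor its corrected form.
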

\begin{proof} Let $\x,\y\in c_{00}(H)$, say $h_\x=\sum_{j=1}^N x_j\fj$ and $h_\y=\sum_{k=1}^M y_k\fk.$ Recall that $x_j=\int_0^{2\pi} h_{\x}(s)\overline{\fj(s)} \ds$ and $y_k=\int_0^{2\pi} h_{\y}(t)\overline{\fk(t)} \dt$ .
Then
\ba
&&\ll  \A*\bB(\x),\y\gg =\\
&=&\sum_{k=1}^M\sum_{j=1}^N \la \hat\mu(j-k)\Skj(x_j),y_k\ra\\
&=& \int_0^{2\pi} \la \sum_{k=1}^M(\sum_{j=1}^N\hat\mu(j-k)\Skj(x_j))\fk(t),h_{\y}(t)\ra\dt\\
&=& \int_0^{2\pi} \la \sum_{l=-M}^N\hat\mu(l)(\sum_{j-k=l}\Skj(x_j)\fk(t)),h_{\y}(t)\ra\dt\\
&=& \int_0^{2\pi} \int_0^{2\pi}\la \sum_{l=-M}^N\hat\mu(l)\Big((\sum_{j-k=l}\Skj\overline{\fj(s)}\fk(t))(h_\x(s))\Big),h_{\y}(t)\ra\ds\dt\\
&=& \int_0^{2\pi} \int_0^{2\pi} \sum_{l=-M}^N\J\mu(l)\Big((\sum_{j-k=l}\Skj\overline{\fj(s)}\fk(t))(h_\x(s))\otimes h_{\y}(t)\Big)\ds\dt\\
&=&  \sum_{l=-M}^N\J\mu(l)\Big(\int_0^{2\pi} \int_0^{2\pi}(\sum_{j-k=l}\Skj\overline{\fj(s)}\fk(t))(h_\x(s))\otimes h_{\y}(t)\ds\dt\Big)\\
&=&  \Psi_{\mu}\Big( \sum_{l=-M}^N \big(\int_0^{2\pi} \int_0^{2\pi} (\sum_{j-k=l}\Skj\overline{\fj(s)}\fk(t))(h_\x(s))\otimes h_{\y}(t)\ds\dt)\varphi_l\Big)\\
&=&  \Psi_{\mu}\Big( \int_0^{2\pi} \int_0^{2\pi}\big(\sum_{k=1}^M \sum_{j=l}^N\Skj\overline{\fj(s)}\fk(t)\varphi_j \varphi_{-k}\big)(h_\x(s))\otimes h_{\y}(t)\ds\dt\Big)\\
&=& \Psi_{\mu}(\int_0^{2\pi}\int_0^{2\pi}{\bB}_{N,M}(s-\cdot, t-\cdot)(h_{\x}(s))\otimes h_{\y}(t) \dt\ds\Big).
\ea
The proof is complete.
\end{proof}
\begin{teor}  If $\mu\in M(\T, \B(H))$ and $\A=(\Tkj)\in \mathcal T$ with $\Tkj=\hat\mu(j-k)$ for $k,j\in \N$ then $\A\in {\mathcal M}_l(\l2)\cap \Ml2$ and
$$\max\{\|\A\|_{{\mathcal M}_l(\l2)}, \|\A\|_{{\mathcal M}_r(\l2)}\}\le |\mu|.$$
\end{teor}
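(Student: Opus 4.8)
The plan is to establish the two bounds separately, obtaining $\|\A\|_{\mathcal M_l(\l2)}\le|\mu|$ directly from Lemma \ref{mainlema} and then deducing $\|\A\|_{\Ml2}\le|\mu|$ by passing to the adjoint measure.

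For the left-multiplier bound I would fix $\bB=(\Skj)\in\Bl2$ and $\x,\y\in c_{00}(H)$. By the representation obtained in Lemma \ref{mainlema}, $\ll\A*\bB(\x),\y\gg=\Psi_\mu(G)$, where $G$ is the $H\hat\otimes H$-valued trigonometric polynomial $G(\tau)=\int_0^{2\pi}\int_0^{2\pi}{\bB}_{N,M}(s-\tau,t-\tau)(h_\x(s))\otimes h_\y(t)\,ds\,dt$. Since $\B(H)=(H\hat\otimes H)^*$, Singer's theorem identifies $\Psi_\mu$ with a bounded functional on $C(\T,H\hat\otimes H)$ of norm $|\mu|$, so that $|\Psi_\mu(G)|\le|\mu|\,\|G\|_{C(\T,H\hat\otimes H)}$. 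Everything therefore reduces to the uniform estimate $\|G(\tau)\|_{H\hat\otimes H}\le\|\bB\|\,\|\x\|\,\|\y\|$.

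The estimate on the projective norm is the crux. Using once more that $\B(H)$ is the dual of $H\hat\otimes H$, I would write $\|G(\tau)\|_{H\hat\otimes H}=\sup_{\|T\|\le1}|\J T(G(\tau))|$, and for each contraction $T\in\B(H)$ I would consider the matrix $\bB_T=(T\Skj)$. This matrix lies in $\Bl2$ with $\|\bB_T\|\le\|\bB\|$, since $\bB_T(\x)$ is obtained from $\bB(\x)$ by applying $T$ in each coordinate and the coordinatewise action of $T$ has norm $\|T\|\le1$. Expanding $\J T(G(\tau))$ by means of $\J T(u\otimes v)=\la Tu,v\ra$ and using $\overline{\fj(s-\tau)}=\fj(\tau)\overline{\fj(s)}$ and $\fk(t-\tau)=\overline{\fk(\tau)}\fk(t)$ to pull the frequency $\tau$ out of the kernel, one absorbs the resulting unimodular factors into the data: the twists $\x_\tau=(\fj(\tau)x_j)_j$ and $\y_\tau=(\fk(\tau)y_k)_k$ satisfy $\J T(G(\tau))=B_{\bB_T}(h_{\x_\tau},h_{\y_\tau})$, the bilinear form of Proposition \ref{l3} associated with $\bB_T$. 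Since $|\fj(\tau)|=|\fk(\tau)|=1$ we have $\|\x_\tau\|=\|\x\|$ and $\|\y_\tau\|=\|\y\|$, so Proposition \ref{l3} gives $|\J T(G(\tau))|\le\|\bB_T\|\,\|\x\|\,\|\y\|\le\|\bB\|\,\|\x\|\,\|\y\|$. Taking the supremum over $T$ and over $\tau$ yields the required estimate, hence $\|\A*\bB\|_{\Bl2}\le|\mu|\,\|\bB\|$ and $\|\A\|_{\mathcal M_l(\l2)}\le|\mu|$.

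For the right-multiplier bound I would argue by duality. The adjoint matrix $\A^*$ has entries $T_{jk}^*=\hat\mu(k-j)^*=\widehat{\mu^*}(k-j)$, so, letting $\nu$ be the reflected measure $\nu(B)=\mu^*(-B)$, one has $\hat\nu(l)=\widehat{\mu^*}(-l)$ and therefore $\A^*\in\mathcal T$ is the Toeplitz matrix with symbol $\nu$. Moreover $\nu\in M(\T,\B(H))$ and $|\nu|=|\mu^*|=|\mu|$, since reflection preserves regularity and variation while the adjoint measure preserves the total variation. Applying the left-multiplier bound already established to the pair $(\A^*,\nu)$ gives $\|\A^*\|_{\mathcal M_l(\l2)}\le|\nu|=|\mu|$, and the identity $\|\A^*\|_{\mathcal M_l(\l2)}=\|\A\|_{\Ml2}$ then yields $\|\A\|_{\Ml2}\le|\mu|$. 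Combining the two bounds proves the stated inequality for the maximum. The delicate step throughout is the uniform control of the projective norm $\|G(\tau)\|_{H\hat\otimes H}$; the clean route is to descend to the predual $\B(H)=(H\hat\otimes H)^*$ and exploit the translation invariance that turns $G(\tau)$ into the bilinear form of Proposition \ref{l3} evaluated on unimodularly twisted, hence norm-preserving, data.
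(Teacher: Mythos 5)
Your proof is correct, and its skeleton is the same as the paper's: the left-multiplier bound comes from Lemma \ref{mainlema} together with Singer's identification $M(\T,\B(H))=C(\T,H\hat\otimes H)^*$ and a uniform bound $\sup_\tau\|G(\tau)\|_{H\hat\otimes H}\le\|\bB\|\,\|\x\|\,\|\y\|$, and the right-multiplier bound then follows from the adjoint duality $\|\A\|_{\Ml2}=\|\A^*\|_{\mathcal M_l(\l2)}$ together with $|\mu^*|=|\mu|$. Where you genuinely differ is in how the crux estimate is executed. The paper rewrites $G(u)=\sum_k\sum_j\Skj(x_j\fj(u))\otimes y_k\overline{\fk(u)}$ and bounds the projective norm directly: subadditivity over this finite sum of elementary tensors, Cauchy--Schwarz in $k$, and then $\bigl(\sum_k\|\sum_j\Skj(x_j\fj(u))\|^2\bigr)^{1/2}=\|\bB\bigl((x_j\fj(u))_j\bigr)\|_{\l2}\le\|\bB\|\,\|\x\|$, since the twisted sequence has norm $\|\x\|$. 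You instead compute $\|G(\tau)\|_{H\hat\otimes H}$ by duality against contractions $T\in\B(H)=(H\hat\otimes H)^*$, which forces you to introduce the auxiliary matrices $\bB_T=(T\Skj)$ and invoke Proposition \ref{l3}; this is valid (your bound $\|\bB_T\|\le\|T\|\,\|\bB\|$ is right), and when unwound it reproduces the paper's inequality with $T$ inserted, so what the paper buys with its direct computation is brevity, while your version makes the duality mechanism explicit. Two harmless bookkeeping slips: (a) for the identity $\J T(G(\tau))=B_{\bB_T}(h_{\x_\tau},h_{\y_\tau})$ to hold literally, the second twist should carry conjugate factors, $\y_\tau=(\overline{\fk(\tau)}y_k)_k$, since the inner product conjugates its second slot --- immaterial, because only $|\fk(\tau)|=1$ is used; (b) the adjoint is conjugate-linear, so $\hat\mu(l)^*=\widehat{\mu^*}(-l)$, which means $\A^*$ is the Toeplitz matrix of $\mu^*$ itself and no reflected measure is needed --- also immaterial, since reflection preserves regularity and total variation, so your conclusion $|\nu|=|\mu|$ and the resulting bound stand.
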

\begin{proof} Since $\|\A\|_{{\mathcal M}_l(\l2)}=\|\A^*\|_{{\mathcal M}_l(\l2)}$ and $|\mu|=|\mu^*|$ then it suffices to show the case of left Schur multipliers.   Let $\x,\y\in c_{00}(H)$ and $\bB=(S_{kj})\subset \B(H)$ such that $\bB\in \Bl2$.
Define $$G(u)= \int_0^{2\pi}\int_0^{2\pi}{\bB}_{N,M}(s-u, t-u)(h_{\x}(s))\otimes h_{\y}(t) \dt\ds.$$
Hence we can rewrite
$$G(u)= \sk\sj\Skj(x_j\fj(u))\otimes y_k\overline{\fk(u)}.$$

In particular \ba
\|G(u)\|_{H\hat\otimes H}&\le& \sk\|\sj\Skj(x_j\fj(u))\| \| y_k\overline{\fk(u)} \|\\
&\le&(\sk\|\sj\Skj(x_j\fj(u))\|^2)^{1/2} \|\y\|\\
&\le&\|{\bf B}\| \|\x\|\|\y\|\ea
From Lemma \ref{mainlema} we have
\ba
|\ll \y, \A* {\bf B}(\x)\gg |&\le &\|\Psi_{\mu}\|_{C(\T, H\hat\otimes H)^*}
\sup_{0\le u<2\pi}\|G(u)\|_{H\hat\otimes H}\\
&=& |\mu|\|\bB\|\|\x\|\|\y\|.
\ea
 This finishes the proof.
\end{proof}

\begin{lema} \label{mainlema2} Let $\mu,\nu \in {\mathfrak M}(\T, \B(H))$,  $\A=(\Tkj)\in \mathcal T$ with $\Tkj=\hat\mu(j-k)$, $\bB=(\Skj)\in \mathcal T$ with $\Skj=\hat\nu(j-k)$ for $k,j\in \N$ and $\x,\y\in c_{00}(H)$. Then
\be \label{mult2}
\ll  \A*\bB(\x),\y\gg =\Psi_{\mu}\Big( \sum_{k=1}^M \big(\sum_{j=1}^N \hat\nu(j-k)( x_j)\fj\big)\otimes y_{k}\bar\fk \Big)
\ee
\end{lema}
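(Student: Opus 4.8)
The plan is to compute $\ll \A*\bB(\x),\y\gg$ directly from the definition and simplify, exploiting the special Toeplitz structure $\Skj=\hat\nu(j-k)$ of $\bB$ which was not available in the more general Lemma \ref{mainlema}. Writing $h_\x=\sum_{j=1}^N x_j\fj$ and $h_\y=\sum_{k=1}^M y_k\fk$, I would start from
\ba
\ll \A*\bB(\x),\y\gg = \sum_{k=1}^M\sum_{j=1}^N \la \hat\mu(j-k)\,\Skj(x_j),y_k\ra
= \sum_{k=1}^M\sum_{j=1}^N \la \hat\mu(j-k)\,\hat\nu(j-k)(x_j),y_k\ra.
\ea
The main device is formula (\ref{mainformula}), namely $\J(TS)(x\otimes y)=\J S(x\otimes T^*y)=\J T(Sx\otimes y)$, together with the relation $\J\hat\mu(l)(x\otimes y)=\la\hat\mu(l)(x),y\ra$ coming from (\ref{fcd}) and (\ref{fc}). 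Using these I would rewrite each scalar $\la \hat\mu(j-k)\hat\nu(j-k)(x_j),y_k\ra$ as $\J\mu(j-k)\big(\hat\nu(j-k)(x_j)\otimes y_k\big)$, so the index $l=j-k$ again organizes the sum.

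Next I would collect terms by the diagonal value $l=j-k$ and pull everything through $\Psi_\mu$. Recall from (\ref{formula2}) that $\Psi_\mu$ is precisely the linear functional satisfying $\Psi_\mu\big((x\otimes y)\fl\big)=\J\mu(l)(x\otimes y)$, so $\sum_l \J\mu(l)(w_l)=\Psi_\mu\big(\sum_l w_l\fl\big)$ whenever $w_l\in H\hat\otimes H$. Applying this with $w_l=\sum_{j-k=l}\hat\nu(j-k)(x_j)\otimes y_k$ converts the double sum into a single evaluation of $\Psi_\mu$ on an element of $\P(\T,H\hat\otimes H)$. The remaining task is purely bookkeeping: reindexing $\sum_l\big(\sum_{j-k=l}\hat\nu(l)(x_j)\otimes y_k\big)\fl = \sum_{k=1}^M\sum_{j=1}^N \hat\nu(j-k)(x_j)\otimes y_k\,\fj\bar\fk$, since $\varphi_{j-k}=\fj\overline{\fk}$, which matches the claimed right-hand side $\Psi_\mu\big(\sum_{k=1}^M(\sum_{j=1}^N\hat\nu(j-k)(x_j)\fj)\otimes y_k\bar\fk\big)$ after distributing the scalar $\fj$ into the first tensor slot.

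The step requiring the most care is the application of the composition formula (\ref{mainformula}) to identify $\la\hat\mu(l)\hat\nu(l)(x),y\ra$ with $\J\mu(l)(\hat\nu(l)(x)\otimes y)$; one must apply the first equality $\J(TS)(x\otimes y)=\J T(Sx\otimes y)$ with $T=\hat\mu(l)$, $S=\hat\nu(l)$ so that $\hat\nu(l)(x_j)$ lands in the correct tensor factor, rather than inadvertently moving an adjoint onto $y_k$. Beyond that, the only subtlety is that the interchange of the finite sum with the functional $\Psi_\mu$ is justified by linearity of $\Psi_\mu$ on polynomials, so no density or convergence argument is needed here — all sums are finite because $\x,\y\in c_{00}(H)$. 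I expect the proof to be short, essentially the reindexing in Lemma \ref{mainlema} specialized by substituting $\Skj=\hat\nu(j-k)$ and using the bilinear identity (\ref{mainformula}) in place of the integral representation of $\bB_{N,M}$.
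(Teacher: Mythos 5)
Your proposal is correct and follows essentially the same route as the paper's proof: expand the pairing as the double sum $\sum_{k,j}\la\hat\mu(j-k)\hat\nu(j-k)(x_j),y_k\ra$, use $\J(TS)(x\otimes y)=\J T(Sx\otimes y)$ to place $\hat\nu(j-k)(x_j)$ in the first tensor slot, collect along the diagonals $l=j-k$, pull the finite sum through $\Psi_\mu$ via its defining property on polynomials, and reindex using $\varphi_{j-k}=\varphi_j\overline{\varphi_k}$. The only (immaterial) difference is ordering — the paper reindexes by $l$ before applying $\J\hat\mu(l)$, while you apply $\J$ termwise first — and your remark that the adjoint variant of the composition formula must be avoided is exactly the right point of care.
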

\begin{proof}  Denote $h_{\x}=\sum_{k=1}^My_k\fk$ and $h_{\y}=\sum_{j=1}^N x_j\fj$.
Then
\ba
\ll  \A*\bB(\x),\y\gg &=&\sum_{k=1}^M\sum_{j=1}^N \la \hat\mu(j-k)\hat\nu(j-k)(x_j),y_k\ra\\
&=&  \sum_{l=-M}^N \sum_{k=1}^M\la \hat\mu(l)\hat\nu(l)(x_{j+l}),y_{k}\ra\\
&=&  \sum_{l=-M}^N\sum_{k=1}^M \J\hat\mu(l)\Big(\nu(l)(x_{k+l})\otimes y_{k}\Big)\\
&=&  \sum_{l=-M}^N \J\hat\mu(l)\Big(\sum_{k=1}^M \hat\nu(l) (x_{k+l})\otimes y_{k}\Big)\\
&=& \Psi_{\mu}\Big(\sum_{l=-M}^N \Big(\sum_{k=1}^M \hat\nu(l)(x_{k+l})\otimes y_{k}\Big)\varphi_l \Big)\\
&=& \Psi_{\mu}\Big( \sum_{k=1}^M (\sum_{j=1}^N \hat\nu(j-k)(x_j)\fj)\otimes y_{k}\bar\fk \Big).\\
\ea
The proof is complete.
\end{proof}

\begin{coro} \label{maincor} Let  $\A=(\Skj)\in \mathcal T$ such that $\Skj=\hat\nu(j-k)$ for some $\nu\in {\mathfrak M}(ºT, \B(H))$. For  each $\x,\y\in c_{00}(H)$ we denote
$$F_{\x,\y,\A}(t)=\sum_{k=1}^\infty (\sj \hat\nu(j-k)(x_j)\fj(t))\otimes y_{k}\bar\fk(t)$$

If $\A\
 \in \M_r(\l2)$ then
$$\|F_{\x,\y, \A}\|_{L^1(\T,H\hat\otimes H)}\le \|\A\|_{\M_r(\l2)}\|\x\|_{\l2}\|\y\|_{\l2}.$$
\end{coro}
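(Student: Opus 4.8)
The plan is to recognize the corollary as essentially a dualized restatement of Lemma \ref{mainlema2}, turning the bilinear pairing into a norm estimate on the $H\hat\otimes H$-valued function $F_{\x,\y,\A}$. First I would observe that by Lemma \ref{mainlema2}, for any $\mu\in {\mathfrak M}(\T,\B(H))$ whose associated Toeplitz matrix is $(\hat\mu(j-k))$, one has
$$\ll (\hat\mu(j-k))*\A(\x),\y\gg = \Psi_\mu\Big(\sum_{k=1}^M\big(\sum_{j=1}^N\hat\nu(j-k)(x_j)\fj\big)\otimes y_k\bar\fk\Big)=\Psi_\mu(F_{\x,\y,\A}),$$
where I identify the finite sum defining $F_{\x,\y,\A}$ (since $\x,\y\in c_{00}(H)$ the sums over $j,k$ are finite, so $F_{\x,\y,\A}\in \P(\T,H\hat\otimes H)$, a genuine trigonometric polynomial with values in $H\hat\otimes H$). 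The key conceptual point is that $\Psi_\mu$ ranges over the dual of $C(\T,H\hat\otimes H)$ as $\mu$ ranges over $M(\T,\B(H))=C(\T,H\hat\otimes H)^*$ by Singer's theorem, and this lets me extract the $L^1$-norm of $F_{\x,\y,\A}$ by duality.

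Next I would set up the duality argument explicitly. Since $F_{\x,\y,\A}\in L^1(\T, H\hat\otimes H)$ and $L^1(\T,H\hat\otimes H)\subseteq M(\T,H\hat\otimes H)=C(\T,H\hat\otimes H)^*$, its $L^1$-norm can be computed by testing against the unit ball of $C(\T,H\hat\otimes H)^{**}$, but more usefully I would use that the $L^1$ norm equals the supremum of $|\int_0^{2\pi}\langle G(t),F_{\x,\y,\A}(t)\rangle\,dt|$ over $G$ in the unit ball of $L^\infty(\T,(H\hat\otimes H)^*)$, or better, test against elements realized as $\Psi_\mu$ with $\mu\in M(\T,\B(H))$, $|\mu|\le 1$. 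Concretely, I would show that for every finite trigonometric polynomial $p\in \P(\T,H\hat\otimes H)$ there is a measure $\mu$ (indeed one supported on the relevant frequencies, built from a suitable element of $C(\T,H\hat\otimes H)^*$) with $|\mu|\le \|p\|_{(\cdots)^*}$ realizing the pairing, so that
$$\|F_{\x,\y,\A}\|_{L^1(\T,H\hat\otimes H)}=\sup\{|\Psi_\mu(F_{\x,\y,\A})|:\mu\in M(\T,\B(H)),\ |\mu|\le 1\}.$$
For each such $\mu$ with $|\mu|\le 1$, the matrix $(\hat\mu(j-k))$ lies in $\Ml2\cap\Bl2$ with norm at most $|\mu|\le 1$ by the preceding theorem, so $\Psi_\mu(F_{\x,\y,\A})=\ll (\hat\mu(j-k))*\A(\x),\y\gg$ is bounded in absolute value by $\|(\hat\mu(j-k))*\A\|_{\Bl2}\|\x\|\|\y\|\le \|\A\|_{\M_r(\l2)}|\mu|\|\x\|\|\y\|\le \|\A\|_{\M_r(\l2)}\|\x\|\|\y\|$. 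Taking the supremum over all admissible $\mu$ yields the claimed inequality.

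The main obstacle will be the duality identification in the middle step: justifying that the $L^1(\T,H\hat\otimes H)$ norm of $F_{\x,\y,\A}$ is exactly attained (or approximated) by pairings $\Psi_\mu$ with $\mu\in M(\T,\B(H))$ of variation at most one, rather than by arbitrary functionals in $C(\T,H\hat\otimes H)^*$. Here I would lean on Singer's theorem as quoted in the excerpt, $M(\T,\B(H))=M(\T,\L(H,H^*))=C(\T,H\hat\otimes H)^*$ with $\|\Psi_\mu\|=|\mu|$, which tells me that \emph{every} functional on $C(\T,H\hat\otimes H)$ arises as some $\Psi_\mu$ with matching norm; thus the supremum over the unit ball of the dual is literally the supremum over $\{|\mu|\le 1\}$, and since $F_{\x,\y,\A}\in L^1\subseteq C^*$ canonically, its bidual norm equals its $L^1$ norm. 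The only remaining care is to confirm that the finitely-supported trigonometric polynomial $F_{\x,\y,\A}$ genuinely represents the correct element of $L^1(\T,H\hat\otimes H)$ and that the pairing $\Psi_\mu(F_{\x,\y,\A})$ agrees with the integral $\int_0^{2\pi}$ of the duality bracket; this follows from formula (\ref{formula2}) applied termwise, so no essential difficulty arises there.
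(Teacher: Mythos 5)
Your overall strategy---identify $\Psi_\mu(F_{\x,\y,\A})$ with $\ll \bB*\A(\x),\y\gg$ via Lemma \ref{mainlema2} and then take a supremum over a unit ball of measures---is the same as the paper's, but you run the duality through the wrong space of measures, and the key estimate breaks. The failing step is your claim that for $\mu\in M(\T,\B(H))$ with $|\mu|\le 1$ the Toeplitz matrix $\bB=(\hat\mu(j-k))$ lies in $\Bl2$ with norm at most $|\mu|$. The theorem preceding Lemma \ref{mainlema2} gives only the \emph{multiplier} norm bound; membership in $\Bl2$ for Toeplitz matrices is governed by the first theorem of Section 5, which requires $\mu\in V^\infty(\T,\B(H))$ and gives $\|\bB\|_{\Bl2}=\|\mu\|_\infty$, a quantity that can be infinite even when $|\mu|=1$. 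Concretely, $\mu=\delta_0\, I$ (unit point mass at $0$ times the identity operator) satisfies $|\mu|=1$, but $\hat\mu(l)=I$ for all $l$, so $\bB$ is the all-identity matrix, which is not in $\Bl2$; since the definition of $\|\A\|_{\M_r(\l2)}$ only controls $\|\bB*\A\|_{\Bl2}$ in terms of $\|\bB\|_{\Bl2}$, nothing bounds $\Psi_\mu(F_{\x,\y,\A})$ for such $\mu$. Indeed your claimed bound is false: in the scalar case $H=\C$, take $\nu=\mu=\delta_0$, so $\A$ is the all-ones matrix with $\|\A\|_{\M(\ell^2)}=1$, and $x_j=y_j=N^{-1/2}$ for $1\le j\le N$; then $\Psi_\mu(F_{\x,\y,\A})=F_{\x,\y,\A}(0)=N$, while $\|\A\|_{\M(\ell^2)}\|\x\|\|\y\|=1$. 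For the same reason your duality identity is wrong: $F_{\x,\y,\A}$ is a continuous $(H\hat\otimes H)$-valued polynomial and, by Singer's theorem, the functionals $\Psi_\mu$ with $|\mu|\le1$ exhaust the unit ball of $C(\T,H\hat\otimes H)^*$, so the supremum you write down equals the sup-norm $\|F_{\x,\y,\A}\|_{C(\T,H\hat\otimes H)}$, not the $L^1$-norm---and the sup-norm does not satisfy the desired inequality, as the example shows. (The corollary itself is of course still true; it is your route to it that collapses.)

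The repair is exactly the paper's proof: use the other duality recorded in Section 3, namely $V^\infty(\T,\B(H))=L^1(\T,H\hat\otimes H)^*$ (valid because $\B(H)=(H\hat\otimes H)^*$), so that $L^1(\T,H\hat\otimes H)$ embeds isometrically into $(V^\infty(\T,\B(H)))^*$ and $\|F_{\x,\y,\A}\|_{L^1(\T,H\hat\otimes H)}=\sup\{|\Psi_\mu(F_{\x,\y,\A})|:\|\mu\|_\infty\le1\}$. For those $\mu$ the Toeplitz theorem of Section 5 guarantees $\bB=(\hat\mu(j-k))\in\Bl2\cap\mathcal T$ with $\|\bB\|_{\Bl2}=\|\mu\|_\infty\le 1$, so Lemma \ref{mainlema2} together with the hypothesis $\A\in\M_r(\l2)$ yields $|\Psi_\mu(F_{\x,\y,\A})|=|\ll\bB*\A(\x),\y\gg|\le\|\A\|_{\M_r(\l2)}\|\x\|_{\l2}\|\y\|_{\l2}$. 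In short: the unit ball you test against must be $\{\|\mu\|_\infty\le1\}$---equivalently, norm-one Toeplitz matrices in $\Bl2$---and not $\{|\mu|\le1\}$.
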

\begin{proof}  If $\A\
 \in \M_r(\l2)$ then $\bB* \A\in \Bl2$ for any $\bB\in \Bl2\cap \mathcal T$. In particular for any $\bB=(\Tkj)$ with $\Tkj=\hat\mu(j-k)$ for some $\mu\in V^\infty(\T,\B(H))$ with $\|\mu\|_\infty= \|\bB\|$.  Since $L^1(\T, H\hat\otimes H)\subseteq (V^\infty(\T, B(H)))^*$ isometrically, we can use Lemma \ref{mainlema2} to obtain
\ba
\|F_{\x,\y, \A}\|_{L^1(\T,H\hat\otimes H)}&=& \sup \{|\Psi_\mu (F_{\x,\y, \A})|: \|\mu\|_\infty=1\}\\
&=& \sup\{ |\ll \bB*\A (\x),\y\gg| : \|\bB\|=1 \}\\
&\le&\|\A\|_{\M_r(\l2)}\|\x\|_{\l2}\|\y\|_{\l2}.
\ea
This completes the proof.
\end{proof}

\begin{teor}  Let $\A=(\Tkj)\in \mathcal T\cap \M_r(\l2)$. Then there exists $\mu\in M_{SOT}(\T, \B(H))$ such that  $\Tkj=\hat\mu(j-k)$ for all $k,j\in \N$.  Moreover $\|\mu\|_{SOT}\le \|\A\|_{\Ml2}.$
\end{teor}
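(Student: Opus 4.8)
The plan is to produce the measure $\mu$ as a weak-* limit and then verify it lies in $M_{SOT}(\T,\B(H))$ with the desired Fourier coefficients and norm bound. Recall that by the earlier proposition, $\mu\in M_{SOT}(\T,\B(H))$ is equivalent to $\mu_x\in M(\T,H)$ for each $x\in H$, and the norm $\|\mu\|_{SOT}=\sup_{\|x\|=1}|\mu_x|$. So it suffices to construct, for each fixed $x\in H$, a regular $H$-valued measure $\mu_x$ whose Fourier coefficients are $\widehat{\mu_x}(l)=T_l(x)=T_{k,k+l}(x)$ (consistent across $k$, which holds since $\A$ is Toeplitz), and to control $|\mu_x|$ uniformly.

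First I would fix $x,y\in H$ with $\|x\|=\|y\|=1$ and unwind Corollary \ref{maincor}. Taking $\x = x\,\ej$ and $\y = y\,\ek$ (or more flexibly using single-coordinate inputs), the function $F_{\x,\y,\A}$ simplifies to a one-variable $H\hat\otimes H$-valued trigonometric expression whose coefficients involve $\hat\mu(l)(x)$ paired against $y$. The corollary gives the uniform bound $\|F_{\x,\y,\A}\|_{L^1(\T,H\hat\otimes H)}\le \|\A\|_{\M_r(\l2)}\|\x\|\|\y\|$. The strategy is to feed this into Lemma \ref{convo}: I would show that the Poisson means $P_r*\mu_x$ are dominated in $L^1(\T,H)$ norm by quantities coming from $F_{\x,\y,\A}$, uniformly in $0<r<1$, so that Lemma \ref{convo}(ii) yields $\mu_x\in M(\T,H)$ with $|\mu_x|\le \|\A\|_{\Ml2}\|x\|$. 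Concretely, pairing $P_r*\mu_x$ against $y$ reconstructs a Poisson-mean version of the expression controlled by the corollary, and the scalar duality $M(\T,H)=C(\T,H)^*$ lets me test against $H$-valued continuous functions built from $y\,\varphi_k$.

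Having produced each $\mu_x$, I would assemble them into a single $\B(H)$-valued measure $\mu$ by setting $\mu(A)(x)=\mu_x(A)$; one checks linearity in $x$ and boundedness of $\mu(A)$ from the semivariation estimates, so that $\mu\in{\mathfrak M}(\T,\B(H))$ via its associated operator $T_\mu$, and then $\mu\in M_{SOT}(\T,\B(H))$ by construction with $\|\mu\|_{SOT}=\sup_{\|x\|=1}|\mu_x|\le\|\A\|_{\Ml2}$. The identification of Fourier coefficients $\hat\mu(j-k)=\Tkj$ follows because $\widehat{\mu_x}(l)=T_l(x)$ for all $x$, using formula (\ref{fc}) relating $\hat\mu(k)$, $\hat{\mu_x}(k)$ and the $\J$-pairing. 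The Toeplitz consistency of $\A$ guarantees that the candidate coefficient $T_l$ is well-defined independently of the diagonal chosen.

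The main obstacle I anticipate is the uniform $L^1$ control needed to invoke Lemma \ref{convo}: I must pass from the two-variable $L^1$ bound on $F_{\x,\y,\A}$ in Corollary \ref{maincor} to a one-variable $L^1(\T,H)$ bound on the Poisson means $P_r*\mu_x$, and do so in a way that is uniform over $0<r<1$ and over $\|y\|=1$ (to recover the full $H$-norm rather than a single scalar pairing). This requires choosing the test sequences $\x,\y$ cleverly — likely letting $\y$ range so that the supremum over $\|y\|=1$ of the scalar pairings reconstitutes $\|\mu_x(\cdot)\|_H$ — and carefully matching the Poisson kernel weights $r^{|l|}$ appearing in (\ref{conv}) with the structure of $F_{\x,\y,\A}$. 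Once this uniform estimate is in place, the weak-* compactness argument of Lemma \ref{convo}(ii) closes the proof and simultaneously delivers the norm bound $\|\mu\|_{SOT}\le\|\A\|_{\Ml2}$.
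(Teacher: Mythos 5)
Your proposal correctly identifies the two tools (Corollary \ref{maincor} and Lemma \ref{convo}) and the right target estimate, but the step that constitutes the actual proof is precisely the one you leave unresolved: how to convert the bound on $F_{\x,\y,\A}$ into the uniform bound $\sup_{0<r<1}\|P_r*\mu_x\|_{L^1(\T,H)}<\infty$. The paper's solution is to take $\x=(\alpha_j x)_j$ and $\y=(\beta_k y)_k$ with \emph{scalar} sequences $(\alpha_j),(\beta_k)\in\ell^2$ and fixed unit vectors $x,y\in H$. Writing $\alpha=\sum_j\alpha_j\fj$, $\beta=\sum_k\beta_k\fk$ and $\gamma(t)=\alpha(t)\beta(-t)$, the function $F_{\x,\y,\A}$ collapses to the elementary tensor $F_{\x,\y,\A}(t)=(\mu_x*\gamma)(t)\otimes y$, and since $\|u\otimes y\|_{H\hat\otimes H}=\|u\|_H\|y\|_H$, Corollary \ref{maincor} becomes
$$\int_0^{2\pi}\|(\mu_x*\gamma)(t)\|\,\dt\le \|\A\|_{\M_r(\l2)}\,\|\alpha\|_{L^2(\T)}\,\|\beta\|_{L^2(\T)}.$$
The missing idea is then to realize the Poisson kernel in exactly this product form with unit $L^2$ factors: taking $\alpha=\beta=a_r$, where $a_r(t)=\sqrt{1-r^2}\,e^{it}(1-re^{it})^{-1}$ is analytic with $\|a_r\|_{L^2(\T)}=1$, one gets $\gamma(t)=a_r(t)a_r(-t)=(1-r^2)|1-re^{it}|^{-2}=P_r(t)$, hence $\|P_r*\mu_x\|_{L^1(\T,H)}\le\|\A\|_{\M_r(\l2)}\|x\|$ uniformly in $r$. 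Neither of your concrete suggestions can replace this trick: with $\x=x\ej$, $\y=y\ek$ the function $F_{\x,\y,\A}$ is the single term $(\hat\mu(j-k)(x)\otimes y)\varphi_{j-k}$, so the corollary yields only $\sup_l\|T_l(x)\|\le\|\A\|_{\M_r(\l2)}\|x\|$, which holds for any bounded Toeplitz matrix and says nothing about bounded variation; and letting $y$ range so that scalar pairings ``reconstitute'' $\|\mu_x(\cdot)\|_H$ would require interchanging $\sup_{\|y\|=1}$ with the $L^1(\T)$ integral, which is not legitimate (and is unnecessary, by the elementary-tensor identity above).

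There is also a structural circularity in your plan: you apply Corollary \ref{maincor} and Lemma \ref{convo}(ii) to $\mu_x$ before any measure exists, whereas both statements presuppose a representing measure (the corollary requires $\Skj=\hat\nu(j-k)$ for some $\nu\in{\mathfrak M}(\T,\B(H))$, and the lemma's Poisson means are defined from the Fourier coefficients of a measure already in ${\mathfrak M}(\T,E)$). The paper avoids this by constructing $\mu$ first: Lemma \ref{ultimo} shows each scalar matrix $A_{x,y}=(\la\Tkj(x),y\ra)$ lies in $\M(\ell^2)$, so Bennet's Theorem \ref{tb} produces $\eta_{x,y}\in M(\T)$ with $\widehat{\eta_{x,y}}(j-k)=\la\Tkj(x),y\ra$; these are assembled into $\mu\in{\mathfrak M}(\T,\B(H))$ with $\hat\mu(j-k)=\Tkj$ (linearity in $x$ via Fourier coefficients, a norm bound, and a regularity argument over a countable dense subset of $H$), and only afterwards is the Poisson estimate used to upgrade each $\mu_x$ to $M(\T,H)$. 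Your from-scratch $w^*$-limit route could be repaired---define $g_r^x=\sum_l T_l(x)r^{|l|}\varphi_l$ directly from the coefficients (convergent because $\sup_l\|T_l\|\le\|\A\|_{\M_r(\l2)}$, as one sees by testing $\A$ against single-entry matrices $\bB$), prove the uniform $L^1(\T,H)$ bound, and apply Banach--Alaoglu in $M(\T,H)=C(\T,H)^*$---but then the linearity of $x\mapsto\mu_x(A)$, the boundedness of each $\mu(A)$, and the regularity needed to place the assembled $\mu$ in ${\mathfrak M}(\T,\B(H))$ (which the definition of $M_{SOT}(\T,\B(H))$ demands) all still require proofs that your proposal does not supply.
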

\begin{proof} Let $\A\in \M_r(\l2)$. For each $x_0,y_0\in H$, as above we consider the scalar-valued Toeplitz matrix $A_{x_0,y_0}=(\la  \Tkj(x_0),y_0\ra$. Using Lemma \ref{ultimo} we have that $A_{x_0,y_0}\in \M(\ell^2)$ and $\|A_{x_0,y_0}\|_{\M(\ell^2)}\le \|\A\|_{\M(\l2)}.$
This guarantees invoking Theorem \ref{tb} that there exists $\eta_{x_0,y_0}\in M(\T)$ such that $\la  \Tkj(x_0),y_0\ra= \widehat {\eta_{x_0,y_0}}(j-k)$ for all $j,k\in \N$  and $|\eta_{x_0,y_0}|=\|A_{x_0,y_0}\|_{\M_r(\ell^2)}.$

Now define $\mu(A)\in \B(H)$ given by $$\la \mu(A)(x),y\ra= \eta_{x,y}(A), \quad x,y\in H.$$
 Let us show that
$\mu\in M_{SOT}(\T, \B(H))$ and $\|\mu\|_{SOT}\le \|\A\|_{\M_r(\l2)}.$

First we need to show that $\mu(A)\in \B(H)$ for any $A\in {\mathfrak B}(\T)$.
This follows using that $$\widehat{\eta_{\lambda x+\beta x',y}}(l)= \lambda\widehat{\eta_{x,y}}(l)+ \beta\widehat{\eta_{x',y}}(l), \quad l\in \Z$$ for any $\lambda, \beta\in \C$ and $x,x', y\in H$. This guarantees that  $\eta_{\lambda x+\beta x',y}= \lambda\eta_{x,y}+ \beta\eta_{x',y}$ and hence $\mu(A):H\to H$ is a linear map. The continuity follows from the estimate $|\eta_{x,y}|\le \|\A\|_{\M_r(\l2)} \|x\|\|y\|$.  To show that it is a regular measure, select $\{x_n:n\in \N\}$ dense in $ H$. Hence  for any $S\in \B(H)$ we have $$\|S\|=\sup \{ \la S(x_n),x_m\ra : n,m\in \N\}.$$
Denoting by $\eta_{n,m}=\eta_{x_n,x_m}$ we have that for each $B\in {\mathfrak B}(\T)$, given $(n,m)\in \N\times \N$  and $\e>0$ there exists $K_{n,m}\subset B\subset O_{n,m}$ which are compact and open respectively so that
$$|\eta_{n,m}|(O_{n,m}\setminus K_{n,m})<\e$$
Now selecting $K=\overline{\cup_{n,m} K_{n,m}}$ and $O= (\cap_{n,m} O_{n,m})^\circ$ we conclude that $$\|\mu\|(O\setminus K)<\e.$$
This shows that $\mu\in  {\mathfrak M}(\T, \B(H))$.

Using now that $$\la T_\mu(\phi)(x), y\ra= T_{\eta_{x,y}}(\phi)$$ for each $\phi\in C(\T)$, where $T_{\eta_{x,y}}\in \L(C(\T), \C)$ denotes the operator associated to $\eta_{x,y}\in M(\T)$,
 we  clearly have that  $\Tkj=\hat\mu(j-k)$ for all $j,k\in \N$.

Select  $y_k=y\beta_k$ for some $\beta_k \in \C$ and $\|y\|=1$. From  Corollary \ref{maincor} we obtain that
\ba && \int_0^{2\pi} \|\Big(\sum_{k=1}^M \sum_{j=1}^N\hat\mu(j-k)(x_{j})\beta_k\fj(t)\bar\fk(t)\Big)\otimes y\|_{H\hat\otimes H} \dt\\
&=&\int_0^{2\pi} \|\sum_{l=-M}^N  \hat\mu(l)(\sum_{k=1}^M x_{l+k})\beta_k)\varphi_l(t)\|\dt\\
&\le& \|\A\|_{\M_r(\l2)}\|\x\|_{\l2}(\sum_{k=1}^M|\beta_k|^2)^{1/2}\ea
Now select $x_j=x \alpha_j$ for $\|x\|=1$ to get
$$\int_0^{2\pi} \|\sum_{l=-M}^N  \hat\mu(l)(x)(\sum_{j-k=l}\alpha_j\fj(t)\beta_k\bar\fk(t))\|\dt\le \|\A\|_{\M_r(\l2)}(\sum_{j=1}^N|\alpha_j|^2)^{1/2}(\sum_{j=1}^N|\beta_j|^2)^{1/2}. $$
Using now
$$\gamma(s)= \sum_{l=-M}^N (\sum_{j-k=l}\beta_k\alpha_j)\varphi_l(s)$$
Now recall that $\hat\mu(l)(x)=\hat\mu_x(l)$ and
$$\sum_{l=-M}^N  \hat\mu_x(l)(\sum_{j-k=l}\alpha_j\fj(t)\beta_k\bar\fk(t))=\int_0^{2\pi}
\sum_{l=-M}^N  \Big(\hat\mu_x(l)\varphi_l(s)\Big) \gamma(t-s) \ds.$$
Therefore, if $\alpha=\sum_{j=1}^\infty \alpha_j \fj$ and $\beta=\sum_{k=1}^\infty\beta_k\fk$ belong to $L^2(\T)$, we have that
$\gamma(t)=\alpha(t)\beta(-t)$ and
\be \label{ultima}
\int_0^{2\pi} \|\mu_x*\gamma(t)\|\dt\le \|\A\|_{\Ml2}\|\alpha\|_{L^2(\T)}\|\beta\|_{L^2(\T)}.
\ee

To show that $\mu_x\in M(\T,H)$, due to Lemma \ref{convo}, it suffices to prove that
\be \label{fin}\sup_{0<r<1}\|\mu_x * P_r\|_{L^1(\T, H)}<\infty.\ee

Choosing $\beta(t)=\alpha(t)=\frac{\sqrt{1-r^2}}{|1-re^{it}|}$ we obtain that $\gamma(t)=P_r(t)$ and from (\ref{ultima}) we get (\ref{fin})and the estimate $\|\mu_x\|_{M(\T,H)}\le \|\A\|_{\M_r(\l2)}.$
This finishes the proof.
\end{proof}

\vspace{.1in}
\[
\begin{tabular}{lccl}
Departamento de An\'alisis Matem\'{a}tico &   \\
Universidad de Valencia &  \\
46100 Burjassot &   \\
Valencia &   \\
Spain &  &  & \\
oscar.blasco@uv.es & Ismael.Garcia-Bayona@uv.es \\
\end{tabular}
\]

\end{document}